\newtheorem{theorem}{Theorem}[section]
\newtheorem{proposition}[theorem]{Proposition}
\newtheorem{lemma}[theorem]{Lemma}
\newcommand{\R}{\mathbb R}
\newcommand{\T}{\mathbb T}
\newcommand{\eps}{\varepsilon}
\newcommand{\dd}{\, \mathrm{d}}
\newcommand{\tr}{\mbox{tr}}
\newcommand{\vv}{\langle v\rangle}
\newcommand{\vvo}{\langle v_0 \rangle}
\numberwithin{equation}{section}
\title[Smooth solutions to the Landau equation with hard potentials]{Existence of smooth solutions to the Landau equation with hard potentials and irregular initial data}
\author{Stanley Snelson}
\address{Department of Mathematics and Systems Engineering, Florida Institute of Technology, Melbourne, FL 32901}
\email{ssnelson@fit.edu}
\author{Shelly Ann Taylor}
\address{Department of Mathematics and Systems Engineering, Florida Institute of Technology, Melbourne, FL 32901}
\email{staylor2009@my.fit.edu}
\thanks{Both authors were partially supported by NSF grant DMS-2213407.}
\begin{document}

\maketitle

\begin{abstract}
This paper addresses large-data local existence and uniqueness of classical solutions to the inhomogeneous Landau equation in the hard potentials  case (including Maxwell molecules). Solutions have previously been constructed by Chaturvedi [SIAM J. Math. Anal., 55(5), 5345--5385, 2023] for initial data in an exponentially-weighted $H^{10}$ space, but it is not a priori clear whether these solutions have more regularity than the initial data. We improve Chaturvedi's existence result in two ways: our solutions are $C^\infty$ for positive times, and we allow initial data in a sub-exponentially-weighted $L^\infty$ space, at the cost of requiring a mild positivity condition at time zero. 

To prove uniqueness, we require stronger assumptions on the initial data: H\"older continuity and the absence of vacuum regions. These are the same assumptions that are required for uniqueness in prior work on the soft potentials case.

Along the way to proving existence and uniqueness, we establish some useful results that were previously only known in the case of soft potentials, including spreading of positivity and propagation of H\"older continuity. Many of the proof strategies from the soft potentials case do not apply here because of the more severe loss of velocity moments.
\end{abstract}

\section{Introduction}

We are interested in the Landau equation, a kinetic integro-differential model from plasma physics. 
At time $t\geq 0$, location $x\in \R^3$, and velocity $v\in \R^3$, the equation for the particle density $f(t,x,v)\geq 0$ reads
\begin{equation}\label{e:main}
\partial_t f + v\cdot \nabla_x f = Q(f,f),
\end{equation}
where $Q(f,g)$ is Landau's bilinear collision operator, which acts only in the velocity variable, and is defined by
\[
Q(f,g) = \nabla_v\cdot\left(\int_{\R^3} a(v-w) [f(w)\nabla_v g(v) - f(v)\nabla_w g(w)]\dd w\right),
\]
for any functions $f,g:\R^3\to \R$, and the matrix $a(z)$ is defined by
\[
a(z) = a_\gamma \left( I - \frac{z\otimes z}{|z|^2}\right) |z|^{\gamma+2},
\]
where, in general, $\gamma \in [-3,1]$, $a_\gamma>0$ is a constant depending on $\gamma$, and $I$ is the $3\times 3$ identity matrix. In this paper, we consider $\gamma \in [0,1]$, known as {\it hard potentials} when $\gamma \in (0,1]$ and {\it Maxwell molecules} when $\gamma =0$.

Our goal is to prove existence of a classical solution on some time interval $[0,T]$, given some ``large'' (i.e. not necessarily close to equilibrium) initial data $f_0$. Other aspects of the well-posedness question (global solutions near equilibrium, renormalized solutions, space homogeneous solutions, etc.) will be briefly surveyed in Section \ref{s:related} below.

Local existence for large initial data began with \cite{HeYang}, which addressed the case $\gamma = -3$, and \cite{Henderson-Snelson-Tarfulea2019}, which addressed $\gamma \in [-3,0)$. Most recently, the hard potentials case was addressed by Chaturvedi in \cite{Chaturvedi2019}. This is the most difficult case for local existence because the growth of $a(z)$ for large $z$ leads to a loss of velocity moments in various estimates of the collision operator. In particular, the analysis in \cite{Chaturvedi2019}, which involves a heirarchy of weighted Sobolev norms, is noticeably more intricate than the existence proof for the soft potentials case in  \cite{Henderson-Snelson-Tarfulea2019}.

Regarding the allowable spaces of initial data, all three of the mentioned works \cite{HeYang, Henderson-Snelson-Tarfulea2019, Chaturvedi2019} worked with initial data in Sobolev spaces of high degree (at least 4) with either exponential or high-degree polynomial decay in velocity. The next step in the project of large-data well-posedness is to enlarge the allowable space of initial data $f_0$ to include functions with no, or minimal, regularity hypotheses, while still recovering regularity of $f$ for positive times. (This is a natural goal because the Landau equation is known to have a hypoelliptic smoothing effect \cite{Golse-Imbert-Mouhot-Vasseur2017, Henderson-Snelson2020}.)
This was accomplished for the case $\gamma\in [-3,0)$ in \cite{Tarfulea2020}, which took initial data $f_0$ with $(1+|v|^5) f_0 \in L^\infty$. Again, the proof does not extend naturally to hard potentials because of velocity moment loss in several steps of the argument.

The only prior local existence result for hard potentials is still \cite{Chaturvedi2019}, which does not imply any smoothing for the solution. This leaves open two questions: 

\begin{enumerate}

\item Do the solutions constructed in \cite{Chaturvedi2019} regularize? There is an {\it a priori} smoothing theorem for the hard potentials case \cite{Snelson2020}, but it is not straightforward to apply this result to the solutions contstructed in \cite{Chaturvedi2019} because \cite{Snelson2020} assumes a uniform lower bound on the mass density for positive times.

\item Can the allowable space of initial data be enlarged beyond the space used in \cite{Chaturvedi2019}, which is essentially $\{ f_0 : e^{\rho|v|} f_0 \in H^{10}(\R^6)\}$?

\end{enumerate}

The current article answers these questions affirmatively by constructing a solution to the Landau equation \eqref{e:main} with $\gamma \in [0,1]$, given initial data $f_0$ with no pointwise regularity assumption (besides measurability) and sub-exponential decay in velocity. 
%
%$e^{\rho |v|^\beta} f_0 \in L^\infty(\R^6)$ for some $\beta\in [\gamma,1]$. 
We also need to assume $f_0$ is uniformly positive in some small ball in $(x,v)$ space. Therefore, our results are comparable to the results derived in \cite{Tarfulea2020} for the soft potentials case, except that we assume sub-exponential decay in $v$ instead of polynomial. As in \cite{Tarfulea2020}, we require additional hypotheses on $f_0$ (H\"older continuity and a stronger lower bound assumption) to prove uniqueness of solutions.

Along the way to proving existence and uniqueness, we need to extend the result from \cite{Henderson-Snelson-Tarfulea2019} on instantaneous spreading of positivity (i.e. filling of vacuum) to the hard potentials case. This is needed because the regularizing effect of the equation relies on positive lower bounds for the solution. There are nontrivial extra difficulties in proving spreading of positivity that are unique to the hard potentials case, as we briefly discuss in Section \ref{s:proof} below.

One of the technical insights of this study is the use of sub-exponential weight functions of the form $\phi(t,v)  = e^{(\rho - \sigma t)\vv^\beta}$, for some $\beta\in [\gamma,1]$ and $\rho,\sigma >0$. These functions are useful both as barriers in comparison principle arguments,  and as weights in energy estimates. %The benefit of $\phi$ is that $\partial_t \phi$ produces a term $-\sigma \vv^\beta \phi$, which has the right sign to absorb the extra moments produced by $Q(f,f)$ when $\gamma >0$. 
Roughly speaking, these sub-exponential functions play a similar role in the current study that inverse-polynomial functions $\vv^{-q}$ play in the soft potentials case (see \cite{Tarfulea2020}). This insight could potentially be applied to the non-cutoff Boltzmann equation, for example in extending the result of \cite{HST2022boltzmann} on existence of classical solutions with irregular data, to the case $\gamma\geq 0$. 

\subsection{Main results}

In both of our main results, we assume that our initial data is periodic in the $x$ variable. This assumption allows us to ignore some technicalities and focus on the key ideas, particularly the difficulties that are specific to the hard potentials regime. The periodicity condition could be removed using techniques similar to those seen in \cite{Tarfulea2020, HST2022boltzmann}.

Recall the notation $\vv = \sqrt{1+|v|^2}$. Our first result is on local existence with large initial data:

\begin{theorem}\label{t:existence}
Let $\gamma \in [0,1]$, and let $f_0 :\R^3_x\times \R^3_v \to [0,\infty)$ be periodic in the $x$ variable and satisfy
\[
\|e^{\rho \vv^\beta} f_0\|_{L^\infty(\R^6)} \leq K_0,
\]
for some $\rho, K_0>0$ and $\beta$ satisfying
\[
\begin{cases}
\beta \in [\gamma, 1], &\text{if $\gamma \in (0,1],$}\\
\beta = 1, &\text{if $\gamma = 0$.}
\end{cases}
\]
Furthermore, if $\gamma \in [0,1)$, assume there exist $(x_m,v_m)\in \R^6$ and $\delta, r>0$ such that
\[
f_0(x,v) \geq \delta, \quad |x-x_m|< r, |v-v_m|< r.
\]
If $\gamma = 1$, assume there exist $\delta, r, R>0$ such that for every $x_m\in \R^3$, there exists a $v_m\in B_R(0)$ with 
\[
f_0(x,v) \geq \delta, \quad |x-x_m|< r, |v-v_m|< r.
\]

Then there exist $T,\sigma >0$ depending on $\gamma$, $\beta$, and $K_0$ (but not on $\delta$, $r$, or $R$) and a classical solution $f$ to the Landau equation \eqref{e:main} on $[0,T]\times\R^3\times\R^3$ such that $e^{(\rho - \sigma t)\vv^\beta} f(t) \in L^\infty(\R^6)$ for each $t\in [0,T]$. This solution is infinitely differentiable in $(0,T]\times\R^3\times\R^3$, with any partial derivative in $(t,x,v)$ variables bounded uniformly on any compact subset of $(0,T]\times\R^3\times\R^3$. 

The solution $f$ agrees with the initial data in the following sense: for any test function $\phi \in C^{1}_{t,x} C^2_v$ with compact support in $[0,T)\times\R^3\times\R^3$,
\[
\int_{\R^6} f_0(x,v) \phi(0,x,v) \dd v\dd x = \int_0^T \int_{\R^6} [f(\partial_t + v\cdot\nabla_x)\phi + Q(f,f) \phi]\dd v\dd x \dd t.
\]
%
%
%
%Let $f_0 :\R^3_x\times \R^3_v \to [0,\infty)$ be periodic in the $x$ variable and satisfy
%\[
%\|e^{\rho \vv^\beta} f_0\|_{L^\infty(\R^6)} \leq K_0,
%\]
%for some $\rho, K_0>0$ and $\beta\in [\gamma, 1]$. Furthermore, assume there exist $(x_m,v_m)\in \R^6$ and $\delta, r>0$ such that
%\[
%f_0(x,v) \geq \delta, \quad |x-x_m|< r, |v-v_m|< r.
%\]
%
%Then there exists $T>0$ depending on $\gamma$ and $K_0$ (but not on $\delta$ or $r$) and a classical solution $f$ to the Landau equation \eqref{e:main} on $[0,T]\times\R^3\times\R^3$. This solution is infinitely differentiable in $(0,T]\times\R^3\times\R^3$, with any partial derivative $\partial f$ in $(t,x,v)$ variables bounded uniformly on any compact subset of $(0,T]\times\R^3\times\R^3$. 
%
%The solution $f$ agrees with the initial data in the following sense: for any test function $\phi \in C^{1}_{t,x} C^2_v$ with compact support in $[0,T)\times\R^3\times\R^3$,
%\[
%\int_{\R^6} f_0(x,v) \phi(0,x,v) \dd v\dd x = \int_0^T \int_{\R^6} [f(\partial+t + v\cdot\nabla_x)\phi + Q(f,f) \phi]\dd v\dd x \dd t.
%\]
\end{theorem}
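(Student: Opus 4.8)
I would obtain $f$ as a locally uniform limit of smooth solutions with regularized data. For $n\in\N$ set $f_0^n=\chi_n(\eta_n*f_0)$ with $\eta_n$ a standard ($x$-periodic) mollifier and $\chi_n$ a smooth velocity cutoff, arranged so that $f_0^n\geq 0$, $\|e^{\rho\vv^\beta}f_0^n\|_{L^\infty}\leq 2K_0$, $f_0^n\to f_0$ a.e.\ and in $L^1_{\mathrm{loc}}$, and the lower-bound hypothesis is inherited with $\delta$ replaced by $\delta/2$ on a slightly smaller set. Each $f_0^n$ lies in the weighted $H^{10}$ class, so Chaturvedi's theorem yields a nonnegative, smooth, rapidly decaying classical solution $f^n$ on a maximal interval $[0,T_n^*)$, with the continuation criterion that the weighted $H^{10}$ norm blows up at $T_n^*$ if $T_n^*<\infty$. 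The plan is then: (i) propagate a uniform weighted $L^\infty$ bound, which both fixes a common interval $[0,T]$ and gives the claimed decay; (ii) use spreading of positivity to get uniform ellipticity and hence uniform interior regularity; (iii) pass to the limit and verify the weak initial condition.

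\textbf{Uniform weighted $L^\infty$ bound.} Writing \eqref{e:main} in non-divergence form $\partial_t f+v\cdot\nabla_x f=\bar a_{ij}[f]\partial_{v_iv_j}f+\bar c[f]f$, with $\bar a[f]=a*f\geq 0$ and $\bar c[f]=c_\gamma(|\cdot|^\gamma*f)\geq 0$ of order $\vv^\gamma$ for hard potentials, I would show that $g:=e^{(\rho-\sigma t)\vv^\beta}f$ obeys a maximum principle on $[0,\min(T_n^*,T)]$, with $T<\rho/\sigma$ and $\sigma,T$ depending only on $\gamma,\beta,K_0$. At an interior space-time maximum of $g$ the transport term drops, $\bar a_{ij}\partial_{v_iv_j}g\leq 0$, and the error terms generated by the weight are estimated using $\bar a_{ij}[f](v)v_iv_j\lesssim\vv^{\gamma+2}\|\langle\cdot\rangle^{\gamma+2}f\|_{L^1_v}$, $\tr\bar a[f]\lesssim\vv^{\gamma+2}\|\langle\cdot\rangle^{\gamma+2}f\|_{L^1_v}$, and $\bar c[f]\lesssim\vv^\gamma\|\langle\cdot\rangle^\gamma f\|_{L^1_v}$. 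Because $\partial_t\phi=-\sigma\vv^\beta\phi$ and $\beta\geq\gamma$, the gain $-\sigma\vv^\beta g$ dominates the production $\bar c[f]g\lesssim\vv^\gamma g$ and the gradient error $2g\,\bar a_{ij}\partial_{v_i}\phi\,\partial_{v_j}\phi/\phi^2\lesssim\vv^{\gamma+2\beta-2}g$ (using $\gamma+\beta\leq 2$), while the $\tr\bar a$ part of $-g\,\bar a_{ij}\partial_{v_iv_j}\phi/\phi$ contributes a favorable term of size $-c(\rho-\sigma t)\beta\vv^{\gamma+\beta}g$. All $L^1_v$ factors are controlled by $\|g\|_\infty$ times a convergent Gaussian-type integral as long as $\rho-\sigma t>0$, so a continuity argument gives $\|e^{(\rho-\sigma t)\vv^\beta}f^n(t)\|_{L^\infty}\leq 2K_0$ on $[0,\min(T_n^*,T)]$. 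Chaturvedi's continuation criterion, together with this bound and the interior regularity below (which needs only the a priori $L^\infty$ bound and ellipticity, available on any existence subinterval), then forces $T_n^*>T$, so all $f^n$ live on the common interval $[0,T]$.

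\textbf{Ellipticity, uniform smoothing, and the limit.} The $L^\infty$ bound gives uniform upper bounds on the mass, energy, and entropy densities. For a lower bound on the diffusion, I invoke the spreading-of-positivity estimate for hard potentials (proved separately): starting from $f_0^n\geq\delta/2$ on a ball, it produces $f^n(t,x,v)\geq\ell(t)>0$, uniform in $n$ and independent of $\delta,r,R$ after a short initial time, hence $\int_{\R^3}f^n(t,x,w)\dd w\geq\underline m(t)>0$ for $t\in(0,T]$; this is precisely where the $\gamma=1$ hypothesis (positivity near every $x_m$, with $v_m\in B_R$) is used, since the more severe moment loss obstructs propagating a lower bound across all of $x$-space from a single ball. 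With the upper bounds this yields $c(t)\Id\leq\bar a_{ij}[f^n]\leq C\vv^{\gamma+2}\Id$ on each $[t_0,T]\times\T^3\times B_N$, so the kinetic De Giorgi--Nash--Moser and Schauder theory for the Landau equation gives uniform-in-$n$ bounds on $f^n$ and all its $(t,x,v)$-derivatives on compact subsets of $(0,T]\times\T^3\times\R^3$. By Arzel\`a--Ascoli and a diagonal argument, a subsequence converges in $C^\infty_{\mathrm{loc}}$ to a nonnegative $f$ solving \eqref{e:main} classically on $(0,T]$, inheriting $\|e^{(\rho-\sigma t)\vv^\beta}f(t)\|_{L^\infty}\leq 2K_0$ and the local derivative bounds. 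Finally, each $f^n$ satisfies the weak formulation with datum $f_0^n$; the uniform weighted $L^\infty$ bound supplies enough decay to pass to the limit in every term (for $\int Q(f^n,f^n)\phi$ after integrating by parts twice and using the coefficient bounds), and $\int f_0^n\phi(0)\to\int f_0\phi(0)$ by dominated convergence, giving the stated identity.

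\textbf{Main obstacle.} The crux is the hard-potential moment loss, and it bites in two places. First, the weighted $L^\infty$ barrier closes only because the sub-exponential exponent may be taken with $\beta\geq\gamma$, so $\partial_t\phi$ beats the order-$\vv^\gamma$ production; keeping the curvature errors at that order rather than at order $\vv^{\gamma+\beta}$ relies on the cancellation structure of $a(v-w)$ and on $\beta\leq 1$. Second, and more seriously, spreading of positivity for $\gamma\geq 0$ cannot follow the soft-potential argument, which exploits decay of $a$ at infinity that is unavailable here; one must construct barriers adapted to the $\vv^{\gamma+2}$ growth, and for $\gamma=1$ this is what forces the stronger, global-in-$x$ positivity assumption on the initial data.
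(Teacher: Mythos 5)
Most of your outline matches the paper's actual proof: mollify and cut off the data, solve with Chaturvedi's theorem, prove a uniform bound on $e^{(\rho-\sigma t)\vv^\beta}f^n$ by a barrier/maximum-principle argument exploiting $\partial_t\phi=-\sigma\vv^\beta\phi$ and $\beta\geq\gamma$ (your continuity/bootstrap closure is a reasonable variant of the paper's two-step argument via Lemma \ref{l:sub-exp} and the technical Lemma \ref{l:annoying}), use spreading of positivity to obtain uniform ellipticity, get interior regularity uniformly in $n$, and pass to the limit in the equation and in the weak formulation of the initial condition.

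The genuine gap is in how you keep all approximate solutions alive on a common interval $[0,T]$. You appeal to ``Chaturvedi's continuation criterion that the weighted $H^{10}$ norm blows up at $T_n^*$'' and claim that your a priori $L^\infty$ bound plus \emph{interior} regularity on compact subsets of $(0,T]\times\T^3\times\R^3$ rules out such blow-up. Neither half of this works as stated: the existence theorem being used (Theorem \ref{t:chaturvedi}) supplies no continuation criterion, and even granting one, $C^\infty_{\mathrm{loc}}$ bounds on compact sets say nothing about the exponentially weighted, global-in-$v$ $H^{10}$ norm, which requires decay of ten derivatives as $|v|\to\infty$. The paper instead extends the solutions by \emph{concatenation}: it shows that $f^\eps(T_\eps,\cdot)$ is again admissible data for Theorem \ref{t:chaturvedi}, which requires global-in-$v$ smoothness with exponential (in fact Gaussian) decay. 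For $\gamma\in(0,1]$ this comes from the self-generating Gaussian decay and smoothing estimates of \cite{Snelson2020}, whose hypotheses (hydrodynamic upper bounds plus a mass lower bound) are exactly what the weighted $L^\infty$ bound and the lower-bound theorem provide; for $\gamma=0$ those results are unavailable, and the paper runs a separate Schauder bootstrap that trades polynomial moments against the exponential decay --- this is precisely why the hypothesis $\beta=1$ is imposed when $\gamma=0$, a case your argument does not treat at all. A secondary inaccuracy: the lower bounds $\eta(t)$ produced by the spreading result do depend on $\delta$ and $r$ (only $T$ and $\sigma$ do not), though since they are only needed uniformly in $n$ this does not affect the structure of the argument.
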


Several comments on the statement of Theorem \ref{t:existence} are in order:
\begin{itemize}

\item The stronger decay assumption when $\gamma = 0$ is a technical condition that could likely be removed in a more detailed study of the specific case $\gamma = 0$. The analysis in this paper is designed with the case $\gamma\in (0,1]$ primarily in mind, but happens to also give a solution for $\gamma = 0$ under the stated hypotheses.

\item The extra lower bound condition when $\gamma = 1$ is needed because our proof of spreading of positivity breaks down in this case. The same issue occurs for the non-cutoff Boltzmann equation, which is why the lower bound theorem in \cite{HST2020lowerbounds} also requires $\gamma<1$.

\item If the initial data is continuous, then it can be shown that $f(t,x,v) \to f_0(x,v)$ pointwise as $t\to 0$, as expected. The proof is identical to \cite[Proposition 3.1]{Tarfulea2020}, so we omit it.

\item If the spatial domain were not periodic, one would need to assume that lower bounds for $f_0$ are ``well-distributed'' in the sense of \cite{Tarfulea2020}, i.e. that no $x$ location is too far away from a location where $f$ satisfies positive lower bounds.

\end{itemize}

%\begin{theorem}\label{t:uniqueness}
%Let $f_0:\R^3_x\times\R^3_v\to [0,\infty)$ be periodic in the $x$ variable and satisfy
%\[
%\|e^{\rho_0 \vv^\gamma} f_0\|_{C^\alpha_{k,x,v}(\T^3\times\R^3)} \leq K_0,
%\]
%for some $\rho_0, K_0>0$ and $\alpha\in (0,1)$. Assume further that $f_0$ satisfies 
%\[
%f_0(x,v) \geq 
%\]
%Let $f:[0,T]\times \T^3\times\R^3 \to [0,\infty)$ be the solution to the Landau equation \eqref{e:main} guaranteed by Theorem \ref{t:existence}. 
%
%There exists $T_U\in (0,T]$, depending on $K_0$, $\rho_0$, and $\alpha$, such that for any classical solution $g\geq 0$ to \eqref{e:main} on $[0,T_U]\times\T^3\times\R^3$ with $g(0,x,v) = f_0(x,v)$, there must hold $f=g$. 

Next, we state our main uniqueness result. As in the corresponding study of the Landau equation with soft potentials \cite{Tarfulea2020} and the related work \cite{HST2022boltzmann} on the Boltzmann equation, stronger assumptions are required to prove uniqueness than existence. The kinetic H\"older space $C^\alpha_{k,x,v}$ is defined in Section \ref{s:kinholder} below.

\begin{theorem}\label{t:uniqueness}
Let $f_0:\R^3_x\times\R^3_v\to [0,\infty)$ be periodic in the $x$ variable and satisfy
\[
\|e^{\rho_0 \vv^\beta} f_0\|_{C^\alpha_{k,x,v}(\T^3\times\R^3)} \leq K_0,
\]
for some $\rho_0, K_0>0$, $\alpha\in (0,1)$, and $\beta$ as in Theorem \ref{t:existence}. Furthermore, assume there are $\delta, r, R>0$ such that for every $x_m \in \R^3$, there exists $v_m\in B_R(0)$ with 
\[
f_0(x,v) \geq \delta, \quad |x-x_m|< r, |v-v_m|< r.
\]
Let $f:[0,T]\times \T^3\times\R^3 \to [0,\infty)$ be the solution to the Landau equation \eqref{e:main} guaranteed by Theorem \ref{t:existence}. 

Then there exists $T_U\in (0,T]$, depending on $K_0$, $\rho_0$, and $\alpha$, such that for any classical solution $g\geq 0$ to \eqref{e:main} on $[0,T_U]\times\T^3\times\R^3$ with $g(0,x,v) = f_0(x,v)$ and
\[
\sup_{t,x} \int_{\R^3} (1 + |v|^{\gamma+2}) g(t,x,v) \dd v < \infty,
\]
there must hold $f=g$. 
\end{theorem}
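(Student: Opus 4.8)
The plan is to run a weighted $L^\infty$ (or weighted $L^2$) stability estimate on the difference $h = f - g$, exploiting the regularity and decay that Theorem \ref{t:existence} provides for the reference solution $f$, together with the moment bound assumed on $g$. First I would note that, by Theorem \ref{t:existence} and the propagation of H\"older continuity and spreading of positivity alluded to in the introduction, the solution $f$ enjoys, on $[0,T]\times\T^3\times\R^3$, pointwise bounds of the form $f(t,x,v)\le C e^{-(\rho_0-\sigma t)\vv^\beta}$, pointwise lower bounds $f(t,x,v)\ge c_t e^{-c_t'\vv^2}$ for $t>0$ (a Maxwellian lower bound), and uniform upper and lower bounds on the hydrodynamic fields (mass, energy, entropy), so that the coefficient matrix $\bar a^f_{ij}(t,x,v)=\int a_{ij}(v-w)f(t,x,w)\dd w$ is uniformly elliptic with the expected $\vv^{\gamma+2}$-type growth and has bounded lower-order coefficients $\bar b^f$, $\bar c^f$ in the weighted norms. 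The same must be extracted for $g$ from the assumed moment bound $\sup_{t,x}\int (1+|v|^{\gamma+2})g\dd v<\infty$: this controls $\bar a^g$, $\bar b^g$, $\bar c^g$ from above, which is all that is needed for the side on which $g$ appears.

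Next I would write the equation satisfied by $h=f-g$. Since $Q$ is bilinear, $Q(f,f)-Q(g,g)=Q(h,f)+Q(g,h)$, so
\[
\partial_t h + v\cdot\nabla_x h = \bar a^g_{ij}\partial_{v_i v_j} h + \bar b^g\cdot\nabla_v h + \bar c^g h + Q(h,f).
\]
The term $Q(g,h)$ has been absorbed into the transport/diffusion part with coefficients built from $g$; the term $Q(h,f)$ is genuinely lower order in $h$ relative to the top-order diffusion but involves two derivatives of $f$, which is exactly why the H\"older/smoothness of $f$ for $t>0$ (and its $L^\infty$ control as $t\to0$, coming from the $C^\alpha_{k}$ data) is essential. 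I would then test against the weight $\psi(t,v)=e^{2(\rho_0-\sigma t)\vv^\beta}$ (or use it as a barrier in a comparison-principle argument, as the introduction advertises for these sub-exponential weights), compute $\tfrac{d}{dt}\|\psi^{1/2} h(t)\|_{L^2}^2$ (or propagate $\sup \psi |h|$), and check that every resulting term is bounded by $C(K_0,\rho_0,\alpha)\,\|\psi^{1/2}h(t)\|_{L^2}^2$ plus a good negative term from ellipticity, provided $\sigma$ is chosen large enough that the weight's time-derivative $-2\sigma\vv^\beta$ dominates the moment loss $\vv^{\gamma+2}$ coming from the growth of $\bar a$ — here one uses $\beta\ge\gamma$ but, crucially, the factor $\sigma t$ only has to beat things on the time interval $[0,T_U]$, and the loss is at worst $\vv^{\gamma+2}\le \vv^\beta\cdot\vv^{\gamma+2-\beta}$, the extra power being controlled by the Gaussian/sub-exponential decay. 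With $h(0)=0$, Gr\"onwall then forces $h\equiv0$ on $[0,T_U]$.

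The main obstacle I expect is precisely the moment-loss bookkeeping in the $Q(h,f)$ term under the sub-exponential weight: unlike the soft-potentials case in \cite{Tarfulea2020}, where $\vv^{-q}$ weights make the kernel growth harmless, here $a(v-w)$ grows like $|v-w|^{\gamma+2}$, so in $\bar a^f$, $\bar b^f$ one picks up factors of $\vv^{\gamma+2}$ that must be beaten by the weight. The fix is the choice $\beta\ge\gamma$ together with a careful splitting of $\int a(v-w)\nabla^2_v f(w)\dd w$ into $|w|\lesssim|v|/2$ and $|w|\gtrsim|v|/2$ regions: on the first region $f(w)$ is not small but $|v-w|\sim|v|$ and one pays $\vv^{\gamma+2}$, compensated by the weight gap $e^{(\rho_0-\sigma t)(\vv^\beta - \langle w\rangle^\beta)}$ being bounded when $\langle w\rangle\le\vv/2$ only after absorbing a polynomial — this forces the strict inequality in the weight exponents and is where $\sigma t>0$ is used; on the second region $\langle w\rangle\gtrsim\vv$, so $e^{-(\rho_0-\sigma t)\langle w\rangle^\beta}$ supplies the needed decay directly. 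A secondary technical point is justifying that $g$, a priori only a classical solution with a one-sided moment bound, has enough regularity to make the difference equation and the integrations by parts rigorous; this is handled by a standard approximation/mollification argument, or by running the estimate in the weak formulation stated in Theorem \ref{t:existence}, exactly as in \cite[\S\,4]{Tarfulea2020} and \cite{HST2022boltzmann}.
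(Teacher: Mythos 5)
Your overall framework is the same as the paper's: write $h=f-g$, use $Q(f,f)-Q(g,g)=Q(h,f)+Q(g,h)$, run a weighted energy estimate with a sub-exponential weight $e^{(\rho-\sigma t)\vv^\beta}$ whose time derivative supplies a good term $-\sigma\vv^\beta$, and conclude by Gr\"onwall. However, two steps that you gloss over are precisely where the work lies, and as written your argument does not close. First, the term $\int \phi\, h\,\tr(\bar a^h D_v^2 f)$ requires a \emph{quantitative} bound on $\|e^{(\rho/2)\vv^\beta}D_v^2 f(t)\|_{L^\infty}$ as $t\to 0$. You assert that "every resulting term is bounded by $C(K_0,\rho_0,\alpha)\|\psi^{1/2}h\|_{L^2}^2$" with a time-uniform constant, but the smoothing coming from Theorem \ref{t:existence} alone gives constants that blow up non-integrably as $t\to0$, and no uniform bound is available. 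The paper's resolution is the combination of Theorem \ref{t:holder-prop} (propagation of the H\"older modulus of $f_0$ forward in time, which is exactly why the uniqueness theorem demands $e^{\rho_0\vv^\beta}f_0\in C^\alpha_{k,x,v}$ and vacuum-free data) with the weighted Schauder estimate of Proposition \ref{p:D2f-est}, whose constant degenerates only like $t^{-1+\alpha^2/(6-\alpha)}$; this rate is integrable, so Gr\"onwall still applies even though the coefficient is not bounded. Your proposal names "propagation of H\"older continuity" in passing but never uses it to produce an integrable-in-time bound on $D_v^2 f$, and without that the $Q(h,f)$ term cannot be controlled. (The companion coefficients $\bar a^h,\bar c^h$ are then handled in the paper by the weighted convolution inequality of Lemma \ref{l:ch-bound}, where the polynomial factor is absorbed because $\vv^{2\mu}/\phi$ is integrable in $v$ --- not by the region-splitting/weight-gap heuristic you describe.)

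Second, your moment-loss bookkeeping for the $Q(g,h)$ part is not correct as stated. Your difference equation mixes the two forms of the operator (a non-divergence term $\bar a^g_{ij}\partial_{v_iv_j}h$ cannot coexist with a separate $\bar b^g\cdot\nabla_v h$ term), and more importantly the good term $\sigma\vv^\beta\phi h^2$ can only absorb a factor $\vv^\beta$ with $\beta\le 1$: a loss of $\vv^{\gamma+2}$, or even of $\vv^{\gamma+1}$ from $\bar b^g$, cannot be "controlled by the sub-exponential decay," since the weight already sits inside the norm and does not cancel polynomial growth multiplying $\phi h^2$ pointwise. The paper avoids ever producing such terms: in $J_1$ and $J_2$ it integrates by parts, uses the identities $\bar b^g_i=-\sum_j\partial_{v_j}\bar a^g_{ij}$ and $\nabla_v\cdot\bar b^g=\bar c^g$, and exploits the anisotropic upper bound $v\cdot(\bar a^g v)\lesssim\vv^{\gamma+2}$ together with $\nabla_v\phi\sim\vv^{\beta-1}\phi$ so that the worst surviving weight is $\vv^{2\beta+\gamma-2}\le\vv^\gamma\le\vv^\beta$. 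You would need to supply these structural manipulations (or an equivalent device) to make the Gr\"onwall inequality legitimate.
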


%\end{theorem}

The uniqueness or non-uniqueness of the solutions constructed in Theorem \ref{t:existence}, when the extra hypotheses of Theorem \ref{t:uniqueness} are not satisfied, remains an open question. 

\subsection{Related work}\label{s:related}

Prior work on the inhomogeneous Landau equation with large initial data has mainly focused on short-time existence \cite{HeYang, Henderson-Snelson-Tarfulea2019, Chaturvedi2019, Tarfulea2020} as well as conditional regularity and continuation criteria \cite{Golse-Imbert-Mouhot-Vasseur2017, Henderson-Snelson2020, henderson2021schauder, snelson2023landau}. Global existence in the large-data case is a challenging open problem that may be out of reach with current techniques. Short-time existence and conditional regularity studies are partially motivated by shedding light on this problem (ruling out certain types of singularities).

Existence theory for the Landau equation is studied in several other regimes, which we briefly survey now.

For global-in-time solutions close to a Maxwellian equilibrium state, see  \cite{guo2002periodic, carrapatoso2016cauchy, CM2017verysoft, kim2020landau, duan2019mild, guo2020landau, golding2023global} and the references therein. See \cite{luk2019vacuum, chaturvedi2020vacuum} for global solutions close to the vacuum state $f\equiv 0$.

In the space homogeneous case, where $f$ is independent of $x$, the Landau equation is known to be globally well-posed, even for large initial data. This was shown in \cite{Desvillettes-Villani2000} for the case $\gamma\in (0,1]$, \cite{villani1998landau} for $\gamma = 0$, \cite{Wu2013,alexandre2015apriori} for $\gamma\in [-2,0)$, and finally \cite{guillen2023landau} for $\gamma \in [-3, -2)$. See \cite{fournier2010uniqueness,  silvestre2015landau, gualdani2017landau, CDE2017landau, carillo2020gradient, GGIV2022landau, golse2022local, desvillettes2023mono, ABDL, chen2023landau, golding2024, gualdani2024blowdown} and the references therein for other results on the existence and regularity of the homogeneous Landau equation. 

A suitable notion of weak solution, called renormalized solutions with defect measure, was constructed in \cite{villani-ren} for very general initial data. The regularity and uniqueness of these solutions is not understood.

\subsection{Difficulties and proof strategy}\label{s:proof}

As mentioned above, the results in this paper are in a similar spirit to \cite{Tarfulea2020}, which considered the case of soft potentials ($\gamma \in [-3,0)$) and used norms with polynomial velocity weights. In general, the current study has to deal with the main difficulties encountered in \cite{Tarfulea2020}  in addition to the new issues brought about by the stronger loss of moments when $\gamma >0$.

Let us now discuss the proof strategies for three main areas of this work.

\subsubsection{Spreading of positivity}

A key tool in \cite{Tarfulea2020} is the positivity-spreading result of \cite{Henderson-Snelson-Tarfulea2019}, which was proven via a probabilistic argument that requires $\gamma < 0$ in an apparently essential way. In the current work (see Theorem \ref{t:lowerbounds}), we extend positivity-spreading to $\gamma\geq 0$ via a deterministic barrier argument inspired by a similar argument from the study of the Boltzmann equation \cite{HST2020lowerbounds}. The basic steps of this argument are as follows: (i)  propagate local lower bounds forward in time, (ii) spread lower bounds to high velocities, (iii) spread lower bounds to any desired point $(t,x)$, using the fact that $f$ has positive lower bounds at the necessary velocity, (iv) spread to large velocities again in a neighborhood of $(t,x)$. 

In the step of spreading to large $|v|$, one cannot adapt the proof given for the Boltzmann equation in \cite{HST2020lowerbounds} (which borrowed some techniques from \cite{Imbert2020lowerbounds}) because the Landau collision operator has worse coercivity properties than the Boltzmann collision operator, in some sense. This is related to the fact that $Q(f,g)$ depends only locally on $g$, unlike in the Boltzmann case where one can prove coercive lower bounds at a point $v_0\in \R^3$ using positivity of $g$ in regions far from $v_0$. 

Instead, we use a barrier argument to spread in velocity, but because of the usual moment loss when $\gamma>0$, one can only close this argument by localizing in $v$, with the end result that we cannot spread lower bounds to arbitrarily large velocity. Despite this, the lower bounds we obtain are sufficient to access the regularizing properties of the collision term $Q(f,f)$ that are needed in our proofs of existence and uniqueness.

If one assumes that $f_0$ satisfies lower bounds that are uniform in $x$ (such as the hypotheses in Theorem \ref{t:uniqueness}), then $f$ will satisfy global lower bounds in $v$ with the optimal Gaussian asymptotics (see \cite{Snelson2020}). It is not clear whether global lower bounds in $v$ are true without these additional assumptions. This is different from what is seen in the Boltzmann case \cite{HST2020lowerbounds} and the soft potentials case of Landau \cite{Henderson-Snelson-Tarfulea2019}.

\subsubsection{Existence}

The key lemma for local existence in the soft potentials case \cite{Tarfulea2020} is an a priori estimate in the polynomially weighted space $L^\infty_q(\R^6)$ of the form $\|f(t)\|_{L^\infty_q(\R^6)} \leq C \|f_0\|_{L^\infty_q(\R^6)}$ for $t$ small enough. This estimate relies on the fact that $h(t,v) = e^{\beta t} \vv^{-q}$ is a supersolution of the linear Landau equation for suitable $\beta$ and $q$, but this fact is false when $\gamma >0$, because the collision term grows too fast in velocity. 

In this work, the polynomially-decaying function $h$ is replaced with sub-exponentially decaying supersolutions $\phi(t,v) = e^{(\rho-\sigma t) \vv^\beta}$, for some $\beta\in [\gamma,1]$. The benefit of $\phi$ is that $\partial_t \phi$ produces a term $-\sigma \vv^\beta \phi$, which has the right sign to absorb the extra moments produced by $\bar c^f$ when $\gamma >0$. This provides an estimate of the form $\|e^{(\rho-\sigma t) \vv^\beta} f(t)\|_{L^\infty(\R^6)} \leq C \|e^{\rho\vv^\beta} f_0\|_{L^\infty(\R^6)}$ for small enough $t$. 

Once this estimate is available, existence follows by an approximation argument similar to \cite{Tarfulea2020, HST2022boltzmann}, using the earlier existence result of Chaturvedi \cite{Chaturvedi2019} and regularity estimates for the Landau equation (see Appendix \ref{s:a}).

\subsubsection{Uniqueness}

There is a fundamental difficulty, seen already in \cite{Tarfulea2020,HST2022boltzmann}, with proving uniqueness in the setting of low-regularity initial data. Namely, to control the difference between two solutions $f$ and $g$, one needs some velocity regularity for one of the solutions (say $f$). Naively applying regularity estimates yields a constant that blows up too fast as $t\to 0$ to be useful. Therefore, one must take initial data that is (in addition to satisfying the assumptions of our existence theorem) H\"older continuous and free of vacuum regions. By studying the evolution of a finite difference of the solution, one can propagate the H\"older modulus forward to positive times, as in \cite{Tarfulea2020}. This provides enough regularity to prove uniqueness.

In the hard potentials case, the same overall strategy works, but implementing the details requires, as usual, controlling extra velocity moments. As in the proof of existence, sub-exponential weights like $e^{(\rho - \sigma t)\vv^\beta}$ are needed because of the good term produced by the time derivative falling on the weight.

\subsection{Notation} 

For $f,g:\R^3\to \R$, it is well-known that Landau's collision operator $Q(f,g)$ can be written in either divergence form
\[
Q(f,g) = \nabla_v\cdot(\bar a^f \nabla_v g) + \bar b^f \cdot \nabla_v g + \bar c^f g,
\]
or non-divergence form
\[
Q(f,g) = \tr(\bar a^f D_v^2 g) + \bar c^f g,
\]
where the nonlocal coefficients are defined by
\begin{align}
\bar a^f(v) &:= a_{\gamma}\int_{\R^3} \left( I - \frac w {|w|} \otimes \frac w {|w|}\right) |w|^{\gamma + 2} f(v-w) \dd w,\label{e:a}\\
\bar b^f(v) &:= b_{\gamma}\int_{\R^3} |w|^\gamma w f(v-w)\dd w,\label{e:b}\\
\bar c^f(v) &:= c_{\gamma}\int_{\R^3} |w|^\gamma f(v-w)\dd w, \label{e:c}
\end{align}
where $I$ is the $3\times 3$ identity matrix and $a_{\gamma}$, $b_{\gamma}$, $c_{\gamma}$ are constants depending on $\gamma$. When $f$ is a function of $(t,x,v)$, then these coefficients naturally also depend on all three variables: $t$, $x$, and $v$.

We denote polynomially-weighted $L^p$ norms with the notation
\[
\|f\|_{L^p_q(\Omega)} = \|\vv^q f\|_{L^p(\Omega)},
\]
for any $p\in [1,\infty]$ and $q\geq 0$. Here, $\Omega$ could be a subset of $\R^3_v$ or $\R^3_x\times\R^3_v$ or $[0,T]\times\R^3_x\times\R^3_v$.

We often use the notation $z = (t,x,v)$ to denote a point in $\R^7$. 

Our solutions are periodic in $x$. The period itself does not play any quantitative role in our proofs, so we assume the spatial domain is the standard torus $\T^3$. Equivalently, we can consider the domain to be $\R^3$, with our solutions extended by periodicity. We use these two notations for the spatial domain interchangeably.

Throughout this paper, all constants may depend on the parameter $\gamma$, even when not explicitly noted.
\subsection{Outline of the paper}
Section \ref{s:prelim} presents some results from the literature that we need, as well as some preliminary lemmas. Section \ref{s:decay} proves decay estimates for large velocities, Section \ref{s:lower} establishes the spreading of positivity, and Section \ref{s:existence} completes the proof of existence. Section \ref{s:holder} proves that H\"older continuity is propagated forward in time, and Section \ref{s:uniqueness} proves uniqueness. Appendix \ref{s:a} provides the regularity analysis that we need in our work.

\section{Preliminaries and known results}\label{s:prelim}

\subsection{Existence for regular initial data}

The following existence result was proven by Chaturvedi \cite{Chaturvedi2019}. We state a simplified version of his theorem with less sharp hypotheses, that is sufficient for our purposes:

\begin{theorem}\label{t:chaturvedi}
Let $M_0 > 0,\gamma \in [0,1],d_0 > 0,f_0$ be such that
\[
{\sum}_{{|\alpha|}+{|\beta|}\leq 10} || \partial^{\alpha}_x \partial^{\beta}_v (e^{d_0 \langle v \rangle} f_0)||^2_{L^2(\R^6)} \leq M_0.
\]
Then for some $T > 0$, depending on $\gamma$, $d_0$ and $M_0$, there is a non-negative solution $f$ to
the Landau equation on $[0,T]\times\R^6$ with $f(0, x, v) = f_0(x, v).$

Moreover, $e^{(d_0 - \kappa t) \langle v\rangle} f\in C([0,T],H^{10}_{x,v}(\mathbb R^6))$, for some $\kappa>0$ depending on $\gamma$, $d_0$, and $M_0$. 
\end{theorem}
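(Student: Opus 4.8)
The statement is Chaturvedi's local existence theorem, so strictly the proof is that of \cite{Chaturvedi2019}; I sketch the scheme I would follow. The plan is a linearize-and-iterate argument driven by uniform a priori bounds in the weighted energy functional
\[
\mathcal E(t) := \sum_{|\alpha|+|\beta|\le 10} \left\| \partial_x^\alpha\partial_v^\beta\big(e^{(d_0-\kappa t)\vv} f(t)\big)\right\|_{L^2(\R^6)}^2 .
\]
First I would set up the iteration: given $f^{(n)}\ge 0$ with $\mathcal E^{(n)}(t)\le 2M_0$ on a short interval, let $f^{(n+1)}$ solve the \emph{linear} equation $\partial_t f^{(n+1)} + v\cdot\nabla_x f^{(n+1)} = \nabla_v\cdot(\bar a^{f^{(n)}}\nabla_v f^{(n+1)}) + \bar b^{f^{(n)}}\cdot\nabla_v f^{(n+1)} + \bar c^{f^{(n)}} f^{(n+1)}$ with datum $f_0$. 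Well-posedness and smoothness of this degenerate (Kolmogorov-type, non-degenerate in $v$) parabolic equation are classical once one knows the coefficients are smooth with $\bar a^{f^{(n)}}$ uniformly positive on compact $v$-sets and with the correct anisotropic growth at infinity, all of which follow from the mass/energy information encoded in $\mathcal E^{(n)}$; nonnegativity $f^{(n+1)}\ge 0$ is inherited from $f_0\ge 0$ by the weak maximum principle for this linear operator.

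The heart of the matter is the uniform bound $\mathcal E^{(n+1)}(t)\le 2M_0$ on $[0,T]$ with $T=T(\gamma,d_0,M_0)$. I would set $g=e^{(d_0-\kappa t)\vv}f^{(n+1)}$ and observe that $g$ solves a transport--diffusion equation with the \emph{favorable} zeroth-order term $-\kappa\vv\,g$ coming from $\partial_t$ hitting the weight, together with conjugation terms built from $\bar a^{f^{(n)}}$ acting on $\nabla_v\vv$ and $g$. The key structural inputs are the upper bounds $|\bar c^{f}(v)|\lesssim \vv^\gamma(1+\mathcal E^{1/2})$, $|\bar b^{f}(v)|\lesssim \vv^{\gamma+1}(1+\mathcal E^{1/2})$, and the fact that $\bar a^{f}$, although of size $\vv^{\gamma+2}$ in directions orthogonal to $v$, contracts to size $\vv^{\gamma}$ along $v/|v|$; using the Cauchy--Schwarz inequality for the positive semidefinite form $\bar a^{f^{(n)}}$ together with Young's inequality, every error term that is not absorbed into the $\bar a^{f^{(n)}}$-dissipation $\int \bar a^{f^{(n)}}_{ij}\partial_i g\,\partial_j g$ is bounded by $C\vv^\gamma|g|^2$ (or $C\vv^\gamma|\partial^{\le 10}g|^2$ at top order), which is dominated by $\kappa\vv|g|^2$ once $\kappa$ is large relative to the implied constants --- possible for every $\gamma\le 1$, with $\kappa$ genuinely large when $\gamma=1$, which only shrinks the lifespan through $T<d_0/\kappa$. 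Differentiating the equation, the commutators $[\partial_v,v\cdot\nabla_x]=\partial_x$ are handled by carrying the full $H^{10}_{x,v}$ norm, and derivatives landing on $\bar a^{f^{(n)}},\bar b^{f^{(n)}},\bar c^{f^{(n)}}$ reproduce coefficients of the same type (by transferring the derivatives onto $f^{(n)}$ inside the defining convolutions) times factors controlled by $\mathcal E^{(n)}$; distributing derivatives by Leibniz and using the Sobolev embeddings $H^2(\R^3)\hookrightarrow L^\infty(\R^3)$ in $x$ and in $v$ to place the lower-order factors in $L^\infty$, one obtains $\tfrac{d}{dt}\mathcal E^{(n+1)}\le C\,P(\mathcal E^{(n)})\,\mathcal E^{(n+1)}$ for a polynomial $P$, whence Gr\"onwall and the bootstrap $\mathcal E^{(n)}\le 2M_0$ close the estimate on a uniform time interval.

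With the uniform bounds in hand I would show the iterates converge: estimating $f^{(n+1)}-f^{(n)}$ in a lower, e.g. weighted $L^2_{x,v}$, norm, the difference solves a linear equation whose coefficients differ by $\bar a^{f^{(n)}}-\bar a^{f^{(n-1)}}=\bar a^{f^{(n)}-f^{(n-1)}}$ (and similarly for $\bar b,\bar c$), so on a possibly smaller time interval the map $f^{(n)}\mapsto f^{(n+1)}$ is a contraction and $f^{(n)}\to f$ strongly in this weak norm; the uniform bound and weak-$*$ compactness give $e^{(d_0-\kappa t)\vv}f\in L^\infty([0,T],H^{10}_{x,v})$, an Aubin--Lions/interpolation argument upgrades this to $C([0,T],H^{10}_{x,v})$ and lets one pass to the limit in $Q(f^{(n)},f^{(n+1)})$, identifying $f$ as a nonnegative solution with $f(0)=f_0$. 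The step I expect to be the main obstacle is the top-order energy estimate --- tracking, uniformly over all $|\alpha|+|\beta|\le 10$, which error terms are genuinely absorbed by the anisotropic $\bar a^{f^{(n)}}$-dissipation versus by the $\kappa\vv$ term, and exploiting the weak ($\vv^\gamma$) diffusion along $v/|v|$ so that no uncontrolled $\vv^{\gamma+2}|g|^2$ terms survive. This is precisely where \cite{Chaturvedi2019} introduces a carefully tuned hierarchy of weighted Sobolev norms; for the simplified hypotheses stated here a single norm as above can be made to work, but the bookkeeping of velocity moments is the real content.
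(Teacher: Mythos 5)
The paper does not prove this statement at all: Theorem \ref{t:chaturvedi} is quoted verbatim (in simplified form) from \cite{Chaturvedi2019}, so there is no internal argument to compare against, and your outline is in effect a sketch of that reference's strategy --- linearization and iteration, propagation of the time-decaying exponential weight $e^{(d_0-\kappa t)\vv}$, weighted $H^{10}$ energy estimates, contraction in a weaker norm. At that level of generality the plan is the right one.

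However, the step you identify as the heart of the matter is also where your sketch has a genuine gap. You claim that, after Cauchy--Schwarz against the $\bar a^{f^{(n)}}$-dissipation, every remaining error term is of size $\vv^{\gamma}|g|^2$ and hence absorbed by the good term $\kappa\vv|g|^2$ from the weight. This is false for $\gamma\in(0,1]$: the conjugation of the drift and diffusion by the weight produces terms of size $\vv^{\gamma+1}|g|^2$ (schematically, $\bar b^{f}\cdot\tfrac{\nabla_v\phi}{\phi}\sim\vv^{\gamma+1}$ and $\tfrac{1}{\vv}\mathrm{tr}\,\bar a^{f}\sim\vv^{\gamma+1}$, since the eigenvalues of $\bar a^f$ orthogonal to $v$ are of size $\vv^{\gamma+2}$), and no choice of $\kappa$ makes $\kappa\vv$ dominate $\vv^{\gamma+1}$ pointwise when $\gamma>0$; at higher derivative orders one even meets $\vv^{\gamma+2}$ losses from $\nabla_v\bar a^{f}$ and from estimating $\bar b\cdot\nabla_v$ against the weak ($\vv^\gamma$) radial dissipation. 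The estimate closes only because of an exact structural cancellation: using $\partial_{v_j}\bar a^f_{ij}=-\bar b^f_i$ and $\nabla_v\cdot\bar b^f=\bar c^f$, the bad $\vv^{\gamma+1}$ contribution from $\mathrm{tr}(\bar a^f)$ is dominated, with a favorable sign, by the $\bar b^f\cdot\nabla_v\phi$ term for a radially increasing weight (the same mechanism that makes the barrier computation in Lemma \ref{l:sub-exp} work, where the $\vv^{2}\mathrm{tr}(\bar a^f)$ term is discarded by sign), and at higher orders one needs the hierarchy of differently weighted norms that \cite{Chaturvedi2019} sets up. Your closing remark that ``a single norm as above can be made to work'' therefore dismisses exactly the content that makes the theorem true in the hard potentials case; as written, the absorption argument does not close for $\gamma\in(0,1]$, and the assertion that large $\kappa$ merely shrinks the lifespan through $T<d_0/\kappa$ misidentifies the mechanism.
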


Below, we will construct a solution using an approximation procedure involving the solutions provided by Theorem \ref{t:chaturvedi}. % by  smoothing our initial data and cutting off large velocities, applying Theorem \ref{t:chaturvedi}, and deriving sufficient estimates on these approximate solutions to take the limit.

\subsection{Coefficient bounds}

First, we have upper bounds on the coefficients in terms of $L^1$ moments of the solution, which are by now standard in the literature:
\begin{lemma} \cite[Lemma 2.1]{Snelson2020} \label{l:abc}
Let $f$ satisfy 
\[
\int_{\R^3}(1 + |v|^{\gamma+2}) f(t,x,v) \dd v \leq K_0, \quad \text{for all }  t\in [0, T], x\in \R^3.
\]
Then there exist constants $C_1$, $C_2$ $C_3$, depending only on $\gamma$, such that 
\begin{align*}
 \bar a^f_{ij}(t,x,v) \xi_i \xi_j &\leq C_1 K_0|\xi|^2
  \begin{cases} (1+|v|)^{\gamma+2}, &\xi \perp v,\\
(1+|v|)^{\gamma},  & \xi \parallel v,\end{cases}\\
|\bar b^f(t,x,v)| &\leq C_2 K_0(1+|v|)^{\gamma+1},\\
\bar c^f(t,x,v) &\leq C_3 K_0 (1+|v|)^\gamma.
\end{align*} 
\end{lemma}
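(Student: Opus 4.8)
The plan is to prove all three estimates by a single mechanism: each coefficient is a convolution of $f$ against an explicit kernel that grows polynomially in $|w|$, so after the change of variables $u = v - w$ everything reduces to bounding the two moments $\int_{\R^3} f(u)\dd u$ and $\int_{\R^3}|u|^{\gamma+2}f(u)\dd u$, both of which are at most $K_0$ by hypothesis. For $\bar c^f$ I would estimate $\bar c^f(v) \le c_\gamma\int_{\R^3}|v-u|^\gamma f(u)\dd u$ and then use the two elementary facts $|v-u|^\gamma \le |v|^\gamma + |u|^\gamma$ (subadditivity of $t\mapsto t^\gamma$, valid since $\gamma\le 1$) and $|u|^\gamma\le 1 + |u|^{\gamma+2}$ (split on whether $|u|\le 1$, using $\gamma\ge 0$), which yields $\bar c^f(v)\lesssim K_0(1+|v|^\gamma)\lesssim K_0(1+|v|)^\gamma$. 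The bound on $\bar b^f$ is the same computation with the kernel $|v-u|^{\gamma+1}$, replacing subadditivity by the convexity estimate $|v-u|^{\gamma+1}\lesssim |v|^{\gamma+1}+|u|^{\gamma+1}$ and using $|u|^{\gamma+1}\le 1 + |u|^{\gamma+2}$.

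For $\bar a^f$ the crude bound is just as easy. Writing $\hat w = w/|w|$, the matrix $I-\hat w\otimes\hat w$ satisfies $0\le I - \hat w\otimes\hat w\le I$ as quadratic forms, so for any $\xi\in\R^3$, $\bar a^f_{ij}(v)\xi_i\xi_j \le a_\gamma|\xi|^2\int_{\R^3}|v-u|^{\gamma+2}f(u)\dd u\lesssim K_0|\xi|^2(1+|v|)^{\gamma+2}$, which already covers the case $\xi\perp v$ (and a generic $\xi$). The one genuinely non-routine point --- and the step I expect to be the main obstacle --- is the improved bound $\bar a^f_{ij}(v)\hat v_i\hat v_j\lesssim K_0(1+|v|)^\gamma$ with $\hat v = v/|v|$, which requires exploiting the cancellation that $I-\hat w\otimes\hat w$ annihilates the direction of $w$: concretely $\hat v_i\hat v_j(I-\hat w\otimes\hat w)_{ij} = 1-(\hat w\cdot\hat v)^2 = |w\times v|^2/(|w|^2|v|^2)$, and the algebraic identity $w\times v = v\times u$ for $u = v-w$ gives $|w\times v| = |v\times u|\le |v||u|$, so this quantity is at most $|u|^2/|w|^2$. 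Consequently the integrand obeys $|w|^{\gamma+2}\,\hat v_i\hat v_j(I-\hat w\otimes\hat w)_{ij}\,f(v-w)\le |v-u|^\gamma|u|^2 f(u)$ --- two powers of $|w|$ have been traded for $|u|^2$ --- and expanding $|v-u|^\gamma\le |v|^\gamma + |u|^\gamma$ together with $|u|^2\le 1 + |u|^{\gamma+2}$ exactly as above reduces the integral to $K_0$ times the two controlled moments, giving the rate $K_0(1+|v|)^\gamma$.

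For a general unit vector $\xi$ I would then decompose $\xi = (\xi\cdot\hat v)\hat v + \eta$ with $\eta\perp v$ and interpolate between the two rates: since $\bar a^f(v)$ is symmetric positive semidefinite, the generalized Cauchy--Schwarz inequality gives $|\bar a^f(v)\hat v\cdot\eta|\le (\bar a^f(v)\hat v\cdot\hat v)^{1/2}(\bar a^f(v)\eta\cdot\eta)^{1/2}\lesssim K_0(1+|v|)^{\gamma+1}|\eta|$, and combining the three contributions recovers $\bar a^f(v)\xi\cdot\xi\lesssim K_0|\xi|^2(1+|v|)^{\gamma+2}$ in general, sharpening to $(1+|v|)^\gamma$ precisely when $\eta=0$. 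The constants $C_1,C_2,C_3$ depend only on $\gamma$ (through $a_\gamma,b_\gamma,c_\gamma$ and the numerical factors in the elementary inequalities above), which is exactly what the lemma asserts.
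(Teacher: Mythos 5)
Your proof is correct, and it is essentially the standard argument behind this lemma, which the paper itself does not prove but simply quotes from \cite[Lemma 2.1]{Snelson2020}: the crude convolution-plus-moment bound handles $\bar b^f$, $\bar c^f$, and the perpendicular/general direction of $\bar a^f$, while the parallel direction rests exactly on the cancellation $\hat v_i\hat v_j(I-\hat w\otimes\hat w)_{ij}=|w\times v|^2/(|w|^2|v|^2)\le |u|^2/|w|^2$ via $w\times v=v\times u$, trading two powers of $|w|$ for the controlled moment $|u|^{\gamma+2}$. Nothing further is needed; the final interpolation paragraph is fine but redundant, since the general-$\xi$ bound already follows from $0\le I-\hat w\otimes\hat w\le I$.
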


Note that the weighted norm $\|f\|_{L^\infty_q(\R^3)} = \|\vv^q f\|_{L^\infty(\R^3)}$ controls the integral quantity $\int_{\R^3} (1+|v|^{\gamma+2}) f(t,x,v) \dd v$ when $q>\gamma+5$. Therefore, the upper bounds of Lemma \ref{l:abc} also hold with $K_0$ replaced by $\|f\|_{L^\infty_q([0,T]\times\R^6)}$, and we sometimes use this alternate statement of the upper bounds without comment.

%The next type of coefficient estimate is based on weighted $L^\infty$-norms of $f$, and is essentially understood in the literature as well. 
%\begin{lemma}\label{l:coeff-Linfty}
%If $f\in L^\infty_q([0,T]\times\R^6)$ for some $q> \gamma+5$, then
%\begin{align*}
% \bar a^f_{ij}(t,x,v) e_i e_j &\leq C \|f\|_{L^\infty_q([0,T]\times\R^6)} (1+ |v|)^{\gamma+2}\\
%%\begin{cases} (1+|v|)^{\gamma+2}, &e\in \mathbb S^{2},\\
%%(1+|v|)^{\gamma},  & e\cdot v = |v|,\end{cases}\\
%|\bar b^f(t,x,v)| &\leq C \|f\|_{L^\infty_q([0,T]\times\R^6)} (1+|v|)^{\gamma+1},\\
%\bar c^f(t,x,v) &\leq C\|f\|_{L^\infty_q([0,T]\times\R^6)}(1+|v|)^\gamma,
%\end{align*} 
%for a constant $C>0$ depending only on $\gamma$ and $q$.
%\end{lemma}
%Lemma \ref{l:abc} can be proven using the convolution estimate $|(g\ast |\cdot|^r)(v)| \leq C \|g\|_{L^\infty_q(\R^3)} \vv^{r}$ where $r>0$ and $q>r+3$. We omit the details.

Furthermore, we have a lower ellipticity estimate for the matrix $\bar a^f$. The proof is the same as \cite[Lemma 4.3]{Henderson-Snelson-Tarfulea2019}.

\begin{lemma}\label{l:coercivity}
If $f:\R^3\to \R$ is nonnegative and satisfies the lower bound
\[
f(v)\geq \delta, \quad  \text{for all $v\in B_r(v_0),$}
\]
for some $r,\delta>0$ and $v_0\in \R^3$, then for all $v\in \R^3$, the matrix $\bar a^f(v)$ defined by \eqref{e:a} satisfies, for $\xi\in \R^3$,
\begin{equation}\label{e:a-lower-bounds}
\bar a^f_{ij}(v) \xi_i\xi_j \geq c_1|\xi|^2 \begin{cases} (1+|v|)^{\gamma}, & \xi \parallel v,\\
 (1+|v|)^{\gamma+2}, &\xi \perp v,
 \end{cases}
\end{equation}
with $c_1>0$ depending only on $\delta$, $r$, and $|v_0|$. 
\end{lemma}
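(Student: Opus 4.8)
The plan is to prove the coercivity estimate \eqref{e:a-lower-bounds} by discarding all of the mass of $f$ except for the portion supported on the ball $B_r(v_0)$, reducing the problem to a lower bound for the matrix $\bar a^{\delta \1_{B_r(v_0)}}(v)$, and then estimating that quadratic form directly. Since $f \geq \delta \1_{B_r(v_0)}$ and the integrand defining $\bar a^f$ in \eqref{e:a} is a nonnegative quadratic form in $\xi$ (the matrix $I - \hat w \otimes \hat w$ is positive semidefinite, and $|w|^{\gamma+2} \geq 0$), we have $\bar a^f_{ij}(v)\xi_i\xi_j \geq \delta a_\gamma \int_{B_r(v_0)} \big(|\xi|^2 - (\xi\cdot\widehat{v-w})^2\big)|v-w|^{\gamma+2}\dd w$ for all $v$. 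So it suffices to bound this last integral from below.

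The key step is a geometric estimate: for $w$ ranging over the fixed ball $B_r(v_0)$, the directions $\widehat{v-w} = (v-w)/|v-w|$ cannot all be nearly parallel to a given unit vector $\hat\xi$. Concretely, I would split into two regimes. First, when $|v|$ is bounded (say $|v| \leq 2|v_0| + 2r + 1$), the whole expression is a continuous, strictly positive function of $(v,\hat\xi)$ on a compact set — positivity being clear since $w \mapsto \widehat{v-w}$ is non-constant on the ball unless $v$ lies on a lower-dimensional set, and even there the integral of $|\xi|^2 - (\xi\cdot\widehat{v-w})^2$ over a ball is positive — so it is bounded below by a constant depending on $r$, $\delta$, $|v_0|$, and this handles the $(1+|v|)^\gamma$ and $(1+|v|)^{\gamma+2}$ factors are both comparable to constants there. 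Second, in the regime $|v|$ large, I would write $v - w = v - v_0 + (v_0 - w)$ and note $|v-w| \sim |v|$ uniformly for $w \in B_r(v_0)$, so $|v-w|^{\gamma+2} \gtrsim (1+|v|)^{\gamma+2}$. For the angular factor, decompose $\xi = \xi_\parallel + \xi_\perp$ relative to $v$. If $\xi \parallel v$, I just use the trivial bound $|\xi|^2 - (\xi\cdot\widehat{v-w})^2 \geq 0$ together with the fact that this is $\gtrsim r^2/|v|^2 \cdot |\xi|^2$ because $\widehat{v-w}$ differs from $\hat v$ by an angle of order $r/|v|$ for $w$ in a fixed sub-ball; combined with $|v-w|^{\gamma+2} \sim |v|^{\gamma+2}$ this yields $\gtrsim (1+|v|)^\gamma |\xi|^2$. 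If $\xi \perp v$, then $\xi \cdot \widehat{v-w}$ is of order $r/|v|$ times $|\xi|$, so $|\xi|^2 - (\xi\cdot\widehat{v-w})^2 \geq (1 - Cr^2/|v|^2)|\xi|^2 \gtrsim |\xi|^2$, and with $|v-w|^{\gamma+2}\sim |v|^{\gamma+2}$ we get $\gtrsim (1+|v|)^{\gamma+2}|\xi|^2$. Since a general $\xi$ is a sum of these two pieces and the quadratic form controls each component appropriately, the stated estimate follows.

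The main obstacle I anticipate is making the angular/geometric estimate clean and uniform in $v$, particularly the bookkeeping that lets one conclude the bound for arbitrary $\xi$ from the parallel and perpendicular cases — one has to be careful that cross terms in $\bar a^f_{ij}\xi_i\xi_j$ are handled correctly, but since $\bar a^f$ is a genuine positive semidefinite matrix this is automatic once the diagonal-in-an-adapted-basis bounds are in place. The only other subtlety is verifying strict positivity on the compact low-velocity set, which is a soft argument (a strictly positive continuous function on a compact set has a positive minimum), but one should confirm the integrand does not vanish identically for any $(v,\hat\xi)$: indeed $\int_{B_r(v_0)}(|\xi|^2 - (\xi\cdot\widehat{v-w})^2)\dd w = 0$ would force $\widehat{v-w} \parallel \xi$ for a.e. $w\in B_r(v_0)$, impossible since $w\mapsto \widehat{v-w}$ sweeps out an open set of directions.

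Finally I would remark that since everything depends on $f$ only through the lower bound $f \geq \delta$ on $B_r(v_0)$, and since the constant $c_1$ produced this way depends only on $\delta$, $r$, and $|v_0|$ (not on any norm of $f$), this is exactly the claimed statement, and the argument is verbatim that of \cite[Lemma 4.3]{Henderson-Snelson-Tarfulea2019}.
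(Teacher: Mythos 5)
Your argument is correct and is essentially the proof the paper points to (it simply cites Lemma 4.3 of Henderson--Snelson--Tarfulea rather than reproducing it): reduce to $f\geq \delta\, 1_{B_r(v_0)}$, handle bounded $|v|$ by continuity and compactness, and for large $|v|$ use that the directions $\widehat{v-w}$ make an angle $\gtrsim r/|v|$ with $v$ on a fixed-measure portion of the ball, while $|v-w|\sim |v|$. Only cosmetic remarks: in the $\xi\perp v$ case the correct bound is $|\xi\cdot \widehat{v-w}|\lesssim (|v_0|+r)|\xi|/|v|$ rather than $r|\xi|/|v|$ (harmless, since the constant may depend on $|v_0|$), and since the lemma only asserts the two cases $\xi\parallel v$ and $\xi\perp v$, your worry about cross terms for general $\xi$ is moot.
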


\subsection{Kinetic H\"older norms}\label{s:kinholder}

The regularity of the inhomogeneous Landau equation is most naturally measured with respect to a metric which respects the invariance of kinetic equations with respect to rescalings of the form $(t,x,v)\mapsto (r^2 t, r^3x, rv)$ and Galilean shifts. In more detail, define the kinetic distance
\[
d_k(z,z') = |t-t'|^{1/2} + |x' - x - (t'-t)v|^{1/3} + |v'-v|.
\]
Technically, $d_k$ is not a metric on $\R^7$ because it does not satisfy the triangle inequality and is not symmetric. However, this fact causes no issues in our analysis. For any $\alpha \in (0,1)$ and domain $\Omega\subset \R^7$, we define the kinetic H\"older seminorm
\[
[u]_{C^\alpha_k(\Omega)} = \sup_{z,z'\in \Omega} \frac{|u(z) - u(z')|}{d_k(z,z')^\alpha},
\]
as well as the norm $\|u\|_{C^\alpha_k(\Omega)} = \|u\|_{L^\infty(\Omega)} + [u]_{C^\alpha_k(\Omega)}$, and the H\"older space $C^\alpha_k(\Omega) = \{u: \Omega \to \R, \|u\|_{C^\alpha(\Omega)} < \infty\}$. 

Let us also define the second-order space $C^{2,\alpha}_k(\Omega)$ using the norm
\[
\begin{split}
\|u\|_{C^{2,\alpha}_k(\Omega)} &= \|u\|_{L^\infty(\Omega)} + \|\nabla_v u\|_{L^\infty(\Omega)} + \|D_v^2 u\|_{C^\alpha_k(\Omega)} + \|(\partial_t + v\cdot\nabla_x)u\|_{C^\alpha_k(\Omega)}.
\end{split}
\]
We sometimes apply the norm $\|\cdot\|_{C^{2,\alpha}}$ to functions $u$ where $\partial_t u$ and $\nabla_x u$ may not be defined pointwise. In this case, the differential operator $(\partial_t + v\cdot \nabla_x)$ has been extended by density. 

We also define the kinetic H\"older norm in $(x,v)$ variables for subsets $\Omega\subset\R^3 \times \R^3$:
\[
[u]_{C^{\alpha}_{k,x,v}(\Omega)} = \sup_{(x,v),(x',v') \in \Omega} \frac{|u(x,v) - u(x',v')|}{d_k((0,x,v),(0,x',v'))^\alpha}, \quad \|u\|_{C^\alpha_{k,x,v}(\Omega)} = \|u\|_{L^\infty(\Omega)} + [u]_{C^\alpha_{k,x,v}(\Omega)}.
\]

Next, we recall the standard kinetic cylinders, defined for some point $z_0 \in \R^7$ and radius $r>0$ by
\[
\begin{split}
Q_r(z_0) &= \{ z=(t,x,v) \in \R^7 : t < t_0 \text{ and } d_k(z,z_0) < r\}\\
&= \{(t,x)\in \R^4 : t_0-r^2 < t < t_0 \text{ and } |x-x_0 - (t-t_0)v_0| < r^3\} \times B_r(v_0).
\end{split}
\]
We also use the notations $Q_r = Q_r(0)$ and 
\[
Q_r^{t,x}(z_0) = \{(t,x)\in \R^4 : t_0-r^2 < t < t_0 \text{ and } |x-x_0 - (t-t_0)v_0| < r^3\}.
\]

The following is a standard result about the H\"older seminorm of a product, which we state without proof:
\begin{lemma}\label{l:product}
For any subset $\Omega\subset \R^7$, and any $f,g\in C^\alpha_k(\Omega)$, the following inequality holds:
\[
[fg]_{C^\alpha_k(\Omega)} \leq \|f\|_{L^\infty(\Omega)}[g]_{C^\alpha(\Omega)} + [f]_{C^\alpha(\Omega)}\|g\|_{L^\infty(\Omega)}.
\]
\end{lemma}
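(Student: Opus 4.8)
The final statement to prove is Lemma~\ref{l:product}, the estimate
\[
[fg]_{C^\alpha_k(\Omega)} \leq \|f\|_{L^\infty(\Omega)}[g]_{C^\alpha_k(\Omega)} + [f]_{C^\alpha_k(\Omega)}\|g\|_{L^\infty(\Omega)}.
\]

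This is a routine product-rule estimate for a seminorm, so the plan is to work directly from the definition of the kinetic H\"older seminorm without invoking any structural property of $d_k$ beyond the fact that it is a nonnegative function on pairs of points. Specifically, I would fix two points $z,z'\in\Omega$ and write the standard telescoping identity $f(z)g(z) - f(z')g(z') = f(z)\bigl(g(z)-g(z')\bigr) + \bigl(f(z)-f(z')\bigr)g(z')$. Taking absolute values and applying the triangle inequality yields
\[
|f(z)g(z) - f(z')g(z')| \leq |f(z)|\,|g(z)-g(z')| + |f(z)-f(z')|\,|g(z')|.
\]

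Next I would bound $|f(z)| \leq \|f\|_{L^\infty(\Omega)}$ and $|g(z')| \leq \|g\|_{L^\infty(\Omega)}$, divide both sides by $d_k(z,z')^\alpha$ (which is legitimate since we only consider $z\neq z'$, for which $d_k(z,z')>0$), and use $|g(z)-g(z')|/d_k(z,z')^\alpha \leq [g]_{C^\alpha_k(\Omega)}$ together with the analogous bound for $f$. This gives
\[
\frac{|f(z)g(z) - f(z')g(z')|}{d_k(z,z')^\alpha} \leq \|f\|_{L^\infty(\Omega)}[g]_{C^\alpha_k(\Omega)} + [f]_{C^\alpha_k(\Omega)}\|g\|_{L^\infty(\Omega)},
\]
and taking the supremum over all $z,z'\in\Omega$ completes the proof.

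There is essentially no obstacle here: the argument never needs the triangle inequality or symmetry of $d_k$ (precisely the properties the authors flagged as failing), since we only ever divide by a single value $d_k(z,z')^\alpha$ and never compare $d_k$ at different pairs. The one point worth a word of care is the asymmetry — one should make sure the telescoping is done so that the $L^\infty$ factor and the seminorm factor land on points consistent with how the supremum in $[\cdot]_{C^\alpha_k}$ is taken, but since the bound $|f(z)|\le\|f\|_{L^\infty}$ holds at every point this is immediate. Hence I would present the proof in three or four lines exactly as above, which is no doubt why the authors chose to omit it.
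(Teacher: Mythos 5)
Your proof is correct: the paper states this lemma without proof precisely because the standard telescoping argument you give, $f(z)g(z)-f(z')g(z') = f(z)\bigl(g(z)-g(z')\bigr)+\bigl(f(z)-f(z')\bigr)g(z')$, followed by the $L^\infty$ bounds and division by $d_k(z,z')^\alpha$, is all that is needed. Your observation that neither symmetry nor the triangle inequality for $d_k$ is required is also accurate, since $d_k(z,z')>0$ whenever $z\neq z'$ and the estimate is pointwise in the pair $(z,z')$.
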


\subsection{Regularity estimates}

In our proof of uniqueness, we need to use regularity estimates in H\"older spaces for the Landau equation. These estimates incorporate known estimates for the linear Landau equation \cite{Golse-Imbert-Mouhot-Vasseur2017, Henderson-Snelson2020} while taking into account the dependence of the coefficients $\bar a^f$, $\bar b^f$, $\bar c^f$ on the solution $f$. We also need to understand how fast the constants in these nonlinear estimates degenerate as $t\to 0$.

This has been worked out previously for the soft potentials case in \cite{Tarfulea2020}. In the hard potentials case, this regularity theory follows the same main ideas, but some of the details of the analysis are different, such as the exponents that appear in the statements of some results. Therefore, we state here the main regularity estimate that we need, and provide a full proof in Appendix \ref{s:a}.

\begin{proposition}\label{p:D2f-est}
Let $f$ be a solution the Landau equation on $[0,T]\times\R^6$, and for some $\tau\in (0,T]$, $\rho_0>0$, $\beta \in [\gamma,1]\setminus\{0\}$, and $\alpha\in (0,1)$, assume that
\[
 e^{\rho_0\vv^\beta} f \in C^\alpha_k([0,\tau]\times\R^6).
\]
and 
\begin{equation}%\label{e:af-lower-bound}
\bar a^f_{ij}(t,x,v)\xi_i \xi_j \geq
 \lambda_0 \begin{cases}
\vv^\gamma, &\xi \perp v,\\
\vv^{\gamma+2}, & \xi \parallel v,
\end{cases}
\quad \text{for all $(t,x,v)\in [0,\tau]\times\R^6$ and $\xi\in \R^3$.}
\end{equation}
Then 
\[
\|e^{\rho \vv^\beta} D_v^2 f\|_{L^\infty([\tau/2,\tau]\times\R^6)} \leq C \left(1 + \tau^{-1 + \frac{\alpha^2}{6-\alpha}}\right)\left(1+\|e^{\rho_0\vv^\beta}  f\|_{C^\alpha_k([0,\tau]\times\R^6)}^{P(\alpha)}\right),
\]
where $\rho = \dfrac \alpha {6-2\alpha} \rho_0$, $P(\alpha)>1$ is an exponent depending only on $\alpha$, and $C>0$ is a constant depending only on $\rho_0$, $\beta$, $\alpha$, $\lambda_0$, and $\|e^{\rho_0\vv^\beta} f\|_{L^\infty([0,\tau]\times\R^6)}$. 
\end{proposition}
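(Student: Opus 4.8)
The plan is to bootstrap through a chain of Schauder-type estimates for the kinetic Fokker--Planck operator, with the subexponential weight $e^{\rho\vv^\beta}$ carried along via commutator estimates, and to track carefully the power of $\tau$ and the polynomial dependence on the $C^\alpha_k$-norm of $f$ at each stage. First I would recall the baseline linear theory: if $u$ solves $(\partial_t + v\cdot\nabla_x)u = \tr(A D_v^2 u) + B\cdot\nabla_v u + S$ on a kinetic cylinder $Q_r(z_0)$ with $A$ uniformly elliptic and $C^\alpha_k$, and $B, S \in C^\alpha_k$, then $\|u\|_{C^{2,\alpha}_k(Q_{r/2}(z_0))} \lesssim r^{-2-\alpha}(\|u\|_{L^\infty(Q_r)} + r^2\|S\|_{C^\alpha_k(Q_r)})$ plus lower-order terms, with a constant depending on the ellipticity ratio and the $C^\alpha_k$-norm of $A/\langle v\rangle^{\gamma+2}$. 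The subtlety is that the natural geometry here is anisotropic: the coefficient $\bar a^f$ grows like $\vv^{\gamma+2}$ in directions $\perp v$ and $\vv^\gamma$ parallel to $v$. The standard device (as in \cite{Henderson-Snelson2020, henderson2021schauder, Tarfulea2020}) is to rescale around each point $v_0$ by a factor $\vv_0^{-1}$ in velocity and $\vv_0^{-3}$ (resp.\ $\vv_0^{-1}$ in time) in the other variables, so that on the rescaled cylinder the coefficients become comparable to the identity; one then applies the flat Schauder estimate and rescales back. Writing $f = e^{-\rho_0\vv^\beta}g$ with $g\in C^\alpha_k$, I would derive the equation satisfied by $g$ (or by $f$ directly) and absorb the weight: the key point, already emphasized in the paper's introduction, is that the weight contributes a favorable term and does not worsen the estimate, though differentiating $e^{\rho_0\vv^\beta}$ produces factors of $\vv^{\beta-1}$ and $\vv^{2\beta-2}$, which are dominated by the $\vv^{\gamma}$, $\vv^{\gamma+2}$ scales since $\beta\le 1$ and $\beta \ge \gamma$.

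Next I would set up the iteration. The estimate I want, $\|e^{\rho\vv^\beta}D_v^2 f\|_{L^\infty}$, requires knowing $f\in C^{2,\alpha}_k$, which in turn requires a source term in $C^\alpha_k$; but the source (coming from $\bar c^f f$ and the lower-order drift terms) involves $f$ itself. So one proceeds in two steps, exactly as in the soft-potential analysis: (i) from $f\in C^\alpha_k$ conclude $f$ is in some intermediate space — say the coefficients $\bar a^f, \bar b^f, \bar c^f$ inherit $C^\alpha_k$-regularity (with weights) by differentiating under the integral sign in \eqref{e:a}--\eqref{e:c}, using that $f$ has subexponential decay and the kernels $|w|^{\gamma+2}$ etc.\ are locally integrable; (ii) apply the weighted, rescaled Schauder estimate on cylinders $Q_{c\vv_0^{-1}}(z_0)$ for $z_0$ with $t_0\in[\tau/2,\tau]$, with radius $\sim \vv_0^{-1}$ but also $\lesssim \sqrt{\tau}$ so the cylinder fits inside $[0,\tau]\times\R^6$ — this is where the $\tau^{-1}$-type blow-up enters, because the radius must shrink like $\sqrt{\tau}$ near $t=0$. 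Balancing the radius constraint ($r \lesssim \min(\vv_0^{-1}, \sqrt\tau\,)$) against the $r^{-2-\alpha}$ in the Schauder constant, iterating the gain of $\alpha$ derivatives finitely many times to pass from $C^\alpha_k$ to $C^{2,\alpha}_k$ (each iteration multiplies the Hölder exponent, roughly $\alpha \to $ something like $\tfrac{2\alpha}{2+\alpha}$ or similar, after a fixed number of steps reaching a space that embeds in $C^{2,\alpha}_k$), one accumulates the exponent $-1 + \frac{\alpha^2}{6-\alpha}$ on $\tau$ and the power $P(\alpha)$ on $\|e^{\rho_0\vv^\beta}f\|_{C^\alpha_k}$. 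The precise bookkeeping of these exponents is what forces the slightly odd-looking constants; I would just carry symbolic exponents through and verify they collapse to the stated values at the end. The loss in the weight exponent, $\rho = \frac{\alpha}{6-2\alpha}\rho_0$, arises because each of the finitely many bootstrap steps shaves a fixed fraction off $\rho_0$ (the weight is partially consumed to control the $\bar c^f f$ term and the commutators), and tracking the geometric decay of the weight constant across the iteration gives the factor $\frac{\alpha}{6-2\alpha}$.

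The main obstacle, and the place where the hard-potential case genuinely differs from \cite{Tarfulea2020}, is controlling the source term $\bar c^f f + (\text{drift})\cdot\nabla_v f + (\text{weight commutators})$ in $C^\alpha_k$ with the right weight and the right growth in $v$: $\bar c^f$ grows like $\vv^\gamma$ and $\bar a^f$ like $\vv^{\gamma+2}$, so the rescaled source on the cylinder around $v_0$ is inflated by $\vv_0^\gamma$ (times the rescaling of derivatives), and one must check that the subexponential weight $e^{\rho\vv^\beta}$ with $\beta\ge\gamma$ absorbs this — the inequality $\vv_0^\gamma e^{-\rho_0\vv_0^\beta} \le C e^{-\rho\vv_0^\beta}$ holds precisely because polynomial growth is beaten by any genuine subexponential, and the gap between $\rho_0$ and $\rho$ gives room. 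The second delicate point is that the lower ellipticity bound is assumed only with the anisotropic weights $\vv^\gamma$/$\vv^{\gamma+2}$, so the rescaled operator is uniformly elliptic with ellipticity ratio $\sim\vv_0^2$ unless one rescales anisotropically (more in the $\perp v$ directions than parallel); I would handle this with the standard change of variables that diagonalizes the leading behavior of $\bar a^f$, at the cost of a mild deterioration in the Hölder constant that is already absorbed into $P(\alpha)$. Everything else — the interpolation inequalities between $L^\infty$, $C^\alpha_k$, $C^{2,\alpha}_k$, the product rule Lemma \ref{l:product}, and differentiating the collision coefficients — is routine, and I would defer it to Appendix \ref{s:a}.
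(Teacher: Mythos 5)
Your sketch gets the coarse architecture right: the anisotropic change of variables around each base point to restore uniform ellipticity, the restriction of the cylinder radius to $\lesssim\min(\langle v_0\rangle^{-1-\gamma/2},\sqrt{\tau})$ as the source of the negative power of $\tau$, the transfer of regularity from $f$ to the nonlocal coefficients, and the fact that the sub-exponential weight absorbs all polynomial moment losses. But the mechanism you propose for producing the stated exponents is not workable as described, and the step that actually produces them is missing. First, there is no iteration of Schauder estimates that ``multiplies the H\"older exponent'' until one lands in $C^{2,\alpha}_k$: a single Schauder application already gains two derivatives once the coefficients and source are H\"older, and the genuine exponent loss occurs earlier and elsewhere, namely in passing from $e^{\rho_0\vv^\beta}f\in C^\alpha_k$ to H\"older regularity of the nonlocal coefficients. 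Your claim that $\bar a^f,\bar c^f$ ``inherit $C^\alpha_k$-regularity'' is too optimistic: in the transformed variables one only gets $[A]_{C^{2\alpha/3}_k(Q_1)},[C]_{C^{2\alpha/3}_k(Q_1)}\lesssim \langle v_0\rangle^{O(1)}[\vv^m f]_{C^\alpha_k}$ (the paper's Lemma \ref{l:ABC-Holder}), so the Schauder step (one application of the estimate of \cite{Henderson2020}, which is also where the power $P(\alpha)$ of the coefficient norm comes from) yields a bound on $[D_v^2 f]_{C^{2\alpha/3}_k(Q_{r_1/2}(z_0))}$ with a factor $(1+t_0^{-1-\alpha/3})$ and a polynomial $\langle v_0\rangle$ prefactor that is killed by taking the polynomial moment orders $q,m$ large (free, thanks to the sub-exponential decay). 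Likewise, your explanation of $\rho=\frac{\alpha}{6-2\alpha}\rho_0$ as ``each bootstrap step shaves a fixed fraction off $\rho_0$'' with geometric decay cannot produce this specific $\alpha$-dependent fraction, and you essentially concede the point by deferring the bookkeeping to ``carry symbolic exponents through and verify they collapse.''

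The missing idea is a weighted interpolation, not a bootstrap. The Schauder step gives only an \emph{unweighted} $C^{2\alpha/3}_k$ seminorm bound for $D_v^2 f$ on the small cylinders; to convert this into the weighted sup bound $\|e^{\rho\vv^\beta}D_v^2f\|_{L^\infty}$ one interpolates that unweighted seminorm against the weighted low-order norm, as in the paper's Lemmas \ref{l:interp1} and \ref{l:exp-interp}:
\[
\|e^{\rho\vv^\beta} D_v^2 f\|_{L^\infty(Q_\theta(z_0))} \leq C\,[D_v^2 f]_{C^{2\alpha/3}_k(Q_\theta(z_0))}^{1-\frac{2\alpha}{6-\alpha}}\,\|e^{\rho'\vv^\beta}f\|_{C^\alpha_k(Q_\theta(z_0))}^{\frac{2\alpha}{6-\alpha}},\qquad \rho'=\rho\Bigl(\tfrac{6}{\alpha}-2\Bigr).
\]
Since the entire weight must ride on the factor raised to the small power $\frac{2\alpha}{6-\alpha}$, setting $\rho'=\rho_0$ forces exactly $\rho=\frac{\alpha}{6-2\alpha}\rho_0$; and raising the Schauder factor $t_0^{-1-\alpha/3}$ to the complementary power $1-\frac{2\alpha}{6-\alpha}$ gives precisely $(-1-\alpha/3)(1-\frac{2\alpha}{6-\alpha})=-1+\frac{\alpha^2}{6-\alpha}$, i.e.\ the stated time exponent, with $P(\alpha)$ coming from the power $3+2\alpha/3+3/\alpha$ in the Schauder estimate raised to the same exponent. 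Without this interpolation step your argument has no route to the claimed constants: a weighted Schauder estimate with the full weight $e^{\rho_0\vv^\beta}$ is never established, and your commutator/absorption heuristics do not replace it. So the proposal, as written, has a genuine gap at exactly the quantitative heart of the proposition.
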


Next, we have a proposition that says that $(x,v)$ regularity implies $t$ regularity, for solutions of a class of linear kinetic equations that include the (linear) Landau equation. A similar result was shown in \cite[Proposition A.1]{Tarfulea2020}, under a stronger assumption on the coefficients, namely that the zeroth-order coefficient $c$ is uniformly bounded. To prove the more general form that we state here, one can modify the proof in \cite{Tarfulea2020} in a straightforward way to account for a $c(t,x,v)$ that grows polynomially in $v$. Therefore, we omit the proof.
\begin{proposition}\label{p:xv-to-t}
Suppose that $f:[0,T]\times\R^6 \to [0,\infty)$ is a solution of the linear equation
\[
\partial_t f + v\cdot\nabla_x f = \textup{\mbox{tr}}(a D_v^2 f) + cf,
\]
where the coefficients $a$ and $c$ satisfy
\[
\|\vv^{\gamma+2} a_{ij}\|_{L^\infty([0,T]\times\R^6)} + \|\vv^\gamma c\|_{L^\infty([0,T]\times\R^6)} \leq K_0.
\]
Furthermore, assume $f$ is locally H\"older continuous in $(x,v)$ variables, and that $\vv^\gamma f$ is bounded. Then $f$ is locally H\"older continuous in all three variables, and the estimate
\[
\|f\|_{C^\alpha_k(Q_1(z_0)\cap [0,T]\times\R^6)} \leq C \langle v_0\rangle^{\alpha(1+\gamma/2)+\gamma}\left( \|f|_{L^\infty_\gamma([0,T]\times\R^6)} + \sup_{\substack{0\leq t\leq t_0\\t_0-t\leq 1}} [f(t,\cdot,\cdot)]_{C^\alpha_{k,x,v}(B_2(x_0,v_0))} \right)
\]
holds, where $C>0$ is a constant depending only on $\gamma$ and $K_0$. 
\end{proposition}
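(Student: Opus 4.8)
The plan is to follow the strategy of \cite[Proposition A.1]{Tarfulea2020}, which converts $(x,v)$ H\"older regularity into full $(t,x,v)$ H\"older regularity for solutions of linear kinetic Fokker--Planck equations, modifying it to accommodate a zeroth-order coefficient $c$ that grows like $\vv^\gamma$ rather than being bounded. The key observation is that time regularity must be extracted from the equation itself: given a point $z_0 = (t_0,x_0,v_0)$, we rescale and freeze coefficients. Introduce the kinetic change of variables centered at $z_0$, i.e.\ the Galilean shift $(t,x,v)\mapsto(t-t_0,\,x-x_0-(t-t_0)v_0,\,v-v_0)$ composed with the parabolic scaling $(t,x,v)\mapsto(r^2 t, r^3 x, r v)$ with $r\sim \langle v_0\rangle^{-1/2}$, so that on the unit cylinder $Q_1$ the rescaled diffusion matrix $\tilde a$ becomes uniformly elliptic and bounded (using the anisotropic bounds $\|\vv^{\gamma+2}a_{ij}\|_\infty\le K_0$ and, implicitly, the ellipticity built into $\bar a^f$), while the rescaled zeroth-order coefficient $\tilde c$ picks up a factor $r^2\langle v_0\rangle^\gamma \sim \langle v_0\rangle^{\gamma-1}$, which is bounded. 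This is exactly where the $\vv^\gamma$ growth of $c$ is absorbed, and it is the only substantive change from \cite{Tarfulea2020}.

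Next I would apply the interior regularity theory for linear kinetic equations with bounded measurable coefficients — concretely, one writes the equation for $\tilde f$ in the rescaled variables and uses a De Giorgi--Nash--Moser or Schauder-type argument (as in \cite{Golse-Imbert-Mouhot-Vasseur2017, Henderson-Snelson2020}, or the elementary argument in \cite{Tarfulea2020}) to bound $[\tilde f]_{C^\alpha_k(Q_{1/2})}$ by the local sup norm of $\tilde f$ plus the spatial-velocity H\"older seminorm of $\tilde f$ on a slightly larger set, plus a contribution from the source term $\tilde c\tilde f$, which is controlled by $\|\tilde f\|_{L^\infty}$ times the bounded $\tilde c$. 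Undoing the scaling, each factor of $r$ contributes a power of $\langle v_0\rangle$: the H\"older seminorm scales with $r^{-\alpha}$, and tracking the worst exponent through the $L^\infty_\gamma$ term and the $C^\alpha_{k,x,v}$ term gives the claimed weight $\langle v_0\rangle^{\alpha(1+\gamma/2)+\gamma}$. The supremum over $0\le t\le t_0$ with $t_0-t\le 1$ of the frozen-time seminorm $[f(t,\cdot,\cdot)]_{C^\alpha_{k,x,v}(B_2(x_0,v_0))}$ appears because the rescaled cylinder $Q_1$ reaches back one unit of rescaled time, i.e.\ $r^2$ units of original time, during which we need control of the spatial data.

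The main obstacle — though it is a routine one in this literature — is bookkeeping the anisotropic scaling correctly: the kinetic distance $d_k$ treats $t$, $x$, $v$ with different homogeneities, the ellipticity of $\bar a^f$ is itself anisotropic (degenerate of order $\vv^\gamma$ in the $v$ direction and $\vv^{\gamma+2}$ transversally), and one must check that the chosen scaling $r\sim\langle v_0\rangle^{-1/2}$ simultaneously normalizes the diffusion, keeps the drift term $v\cdot\nabla_x$ in its standard form (this is automatic for the kinetic scaling), and renders $\tilde c$ bounded. Since all of these are compatible — precisely because the anisotropy of $a$ matches the kinetic scaling and $\gamma\le 1$ forces $\langle v_0\rangle^{\gamma-1}\le 1$ — the argument goes through exactly as in \cite{Tarfulea2020}, which is why the proof is relegated to a remark rather than written out. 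I would simply note that the only new point is the estimate $r^2\langle v_0\rangle^\gamma\lesssim 1$ for the rescaled zeroth-order term, and refer to \cite{Tarfulea2020} for the remaining details.
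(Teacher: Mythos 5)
Your high-level plan (adapt \cite[Proposition A.1]{Tarfulea2020} to a zeroth-order coefficient with polynomial growth) is exactly what the paper intends — it omits the proof with precisely that remark — but the mechanism you describe for carrying it out has two concrete problems. First, the scaling bookkeeping is off. An isotropic kinetic rescaling $(t,x,v)\mapsto(r^2t,r^3x,rv)$ leaves the diffusion term unchanged in size: the factor $r^2$ coming from $\partial_t+v\cdot\nabla_x$ cancels against the $r^2$ from $D_v^2$, so choosing $r\sim\langle v_0\rangle^{-1/2}$ does not render the rescaled $a$ bounded of order one (with $a\sim\vv^{\gamma+2}$ it stays of size $\langle v_0\rangle^{\gamma+2}$). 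To normalize the diffusion one needs the anisotropic change of variables of Appendix \ref{s:a} (the map $S$ together with $r_1\sim\langle v_0\rangle^{-1-\gamma/2}$), and it is this scale, not $\langle v_0\rangle^{-1/2}$, that produces the weight in the statement: $r_1^{-\alpha}=\langle v_0\rangle^{\alpha(1+\gamma/2)}$, whereas your $r^{-\alpha}=\langle v_0\rangle^{\alpha/2}$ does not match. Relatedly, your claim that ``the only new point is $r^2\langle v_0\rangle^\gamma\lesssim1$,'' requiring $\gamma\le1$, is not how the $c$-term enters: in the correct argument the $cf$ term is simply estimated by its sup bound times the time increment and contributes the explicit factor $\langle v_0\rangle^\gamma$ visible in the conclusion — it is not absorbed into a bounded constant, and no restriction $\gamma\le 1$ is used at this point.

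Second, and more fundamentally, routing the proof through De Giorgi--Nash--Moser or Schauder interior estimates is the wrong mechanism for this proposition. The hypotheses give only \emph{upper} bounds on $a$ and $c$ (no lower ellipticity is assumed — $a$ could even vanish, leaving a pure transport equation, and the statement must still hold), De Giorgi would only yield some small universal H\"older exponent rather than the assumed exponent $\alpha$, and Schauder would require H\"older continuity of the coefficients in $t$, which is neither assumed nor available. The actual content of \cite[Proposition A.1]{Tarfulea2020} is a direct transfer of the time-slice $(x,v)$-H\"older modulus to time regularity using only the equation and sup bounds on the coefficients: mollify in $(x,v)$ (or use a paraboloid barrier) at velocity scale $\eps$, bound $(\partial_t+v\cdot\nabla_x)$ of the mollified function by $\|a\|_{L^\infty(\text{near }v_0)}\,\eps^{\alpha-2}[f(t)]_{C^\alpha_{k,x,v}}+\|cf\|_{L^\infty}$, and optimize $\eps\sim|t-s|^{1/2}\langle v_0\rangle^{1+\gamma/2}$; this yields $|t-s|^{\alpha/2}\langle v_0\rangle^{\alpha(1+\gamma/2)}[f(t)]_{C^\alpha_{k,x,v}}+|t-s|\,\langle v_0\rangle^{\gamma}\|f\|_{L^\infty_\gamma}$, which is exactly the stated estimate. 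So while you cite the right source, the proof you sketch would not establish the proposition as stated, and its quantitative conclusion does not follow from your scaling.
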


\subsection{Sub-exponential functions}

Throughout the paper, we make use of functions of the form  $\phi = e^{\rho \vv^\beta}$ for some $\rho \in \R$ and $\beta>0$. Sometimes, $\rho$ will be replaced by a linear function of $t$. Let us collect a few useful properties: 
\begin{align}
\partial_{v_i} \phi &= \rho \beta \phi \vv^{\beta-2} v_i , \quad i = 1,2,3,\label{e:phi-first}\\
\partial_{v_i v_j} \phi &=  \rho \beta \vv^{\beta-4} \phi \left[ (\beta-2)  v_i v_j + \vv^{2} \delta_{ij} + \rho \beta \vv^{\beta} v_i v_j\right], \quad i,j = 1,2,3,\label{e:phi-second}\\
\bar a^g_{ij} \partial_{v_i v_j} \phi &= \rho \beta \vv^{\beta-4}\phi\left[ \left((\beta - 2)  + \rho \beta \vv^{\beta}\right) \bar a^g_{ij} v_i v_j + \vv^{2} \tr(\bar a^g)\right] \label{e:a-phi},
\end{align}
where $\bar a^g$ is defined by \eqref{e:a} for any function $g$, and the expression in \eqref{e:a-phi} is summed over repeated indices.

We also have two interpolation lemmas with sub-exponential weights, that will be used in our proof of the propagation of a H\"older modulus:

\begin{lemma}\label{l:interp1}
For any $\theta, \alpha\in (0,1]$, any $z_0\in \R^7$ any $\beta \in [0,1]$, any $\rho_1 \geq \rho_0 \geq 0$, and any $g:Q_\theta(z_0) \to \R$ such that the right-hand side is finite, there holds
\begin{equation}\label{e:claim}
[e^{[(\rho_0+\rho_1)/2] \vv^\beta} g]_{C^{\alpha/2}_k(Q_\theta(z_0))} \leq C [e^{\rho_0 \vv^\beta} g]_{C^{\alpha}_k(Q_\theta(z_0))}^{1/2}\|e^{\rho_1  \vv^\beta} g\|_{L^\infty(Q_\theta(z_0))}^{1/2},
\end{equation}
for a constant $C$ depending on $\rho_0$, $\rho_1$, $\beta$, and $\alpha$. 

Furthermore, the same interpolation holds for functions defined on $[0,T]\times\R^6$:
\[
[e^{[(\rho_0+\rho_1)/2] \vv^\beta} g]_{C^{\alpha/2}_k([0,T]\times\R^6)} \leq C [e^{\rho_0 \vv^\beta} g]_{C^{\alpha}_k([0,T]\times\R^6)}^{1/2}\|e^{\rho_1  \vv^\beta} g\|_{L^\infty([0,T]\times\R^6)}^{1/2},
\]
with $C$ as above.
\end{lemma}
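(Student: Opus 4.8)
The plan is to prove the local (cylinder) version first and then deduce the global version by a standard covering argument. Fix $z_0 \in \R^7$ and abbreviate $w(v) = \vv^\beta$, $m = (\rho_0+\rho_1)/2$. The key point is that the weighted function $e^{mw}g$ differs from $e^{\rho_0 w} g$ only by the slowly-varying factor $e^{(m-\rho_0)w}$, and likewise from $e^{\rho_1 w} g$ by $e^{(m-\rho_1)w}$. The elementary interpolation inequality for H\"older seminorms states that for any $u$ and any $\alpha, \theta \in (0,1]$,
\[
[u]_{C^{\alpha/2}_k(Q_\theta(z_0))} \leq C\, [u]_{C^\alpha_k(Q_\theta(z_0))}^{1/2} \|u\|_{L^\infty(Q_\theta(z_0))}^{1/2};
\]
this follows by splitting the supremum defining $[u]_{C^{\alpha/2}_k}$ according to whether $d_k(z,z')$ is large or small relative to a threshold, bounding the large-separation part by $\|u\|_{L^\infty}$ and the small-separation part by $[u]_{C^\alpha_k} d_k^{\alpha}/d_k^{\alpha/2}$, and optimizing the threshold. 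So the whole problem reduces to (i) applying this to $u = e^{mw}g$, and (ii) re-expressing $[e^{mw}g]_{C^\alpha_k}$ and $\|e^{mw}g\|_{L^\infty}$ in terms of the quantities appearing on the right side of \eqref{e:claim}.

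For step (ii), the $L^\infty$ bound is trivial: since $\rho_0 \le m \le \rho_1$ and $w \ge 0$, we have $e^{mw} \le e^{\rho_1 w}$ pointwise, hence $\|e^{mw}g\|_{L^\infty} \le \|e^{\rho_1 w}g\|_{L^\infty}$. For the seminorm, write $e^{mw}g = e^{(m-\rho_0)w} \cdot (e^{\rho_0 w} g)$ and apply Lemma \ref{l:product}:
\[
[e^{mw}g]_{C^\alpha_k(Q_\theta(z_0))} \leq \|e^{(m-\rho_0)w}\|_{L^\infty(Q_\theta(z_0))} [e^{\rho_0 w}g]_{C^\alpha_k(Q_\theta(z_0))} + [e^{(m-\rho_0)w}]_{C^\alpha_k(Q_\theta(z_0))} \|e^{\rho_0 w}g\|_{L^\infty(Q_\theta(z_0))}.
\]
Here $m - \rho_0 = (\rho_1-\rho_0)/2 \ge 0$, so $e^{(m-\rho_0)w}$ is an unbounded function of $v$, and this is where the genuine work lies: we must estimate its $L^\infty$ norm and its $C^\alpha_k$ seminorm \emph{on the cylinder} $Q_\theta(z_0)$, which has $v$-radius $\theta \le 1$, so that $\vv$ ranges over an interval of length $O(1)$ around $\vvo$. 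On $Q_\theta(z_0)$ one has $\|e^{(m-\rho_0)w}\|_{L^\infty} \le C e^{(m-\rho_0)\vvo^\beta}$ and, using \eqref{e:phi-first} together with $|\nabla_v w| = \beta \vv^{\beta-1} \le \beta$ (since $\beta \le 1$) and the fact that a $C^1$ function on a cylinder of radius $\theta$ has $C^\alpha_k$ seminorm controlled by $\theta^{1-\alpha}\|\nabla_v(\cdot)\|_{L^\infty}$ plus a contribution from the $t,x$ directions (which vanish here since the function is $v$-only but the kinetic distance mixes them — one uses $|x-x'-(t-t')v| \le d_k^3$ and $|t-t'|\le d_k^2$ to see the function is still $C^\alpha_k$ with the right constant), one gets $[e^{(m-\rho_0)w}]_{C^\alpha_k(Q_\theta(z_0))} \le C (m-\rho_0) e^{(m-\rho_0)\vvo^\beta}$. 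Plugging in, the bracketed sum is $\le C e^{(m-\rho_0)\vvo^\beta} \big( [e^{\rho_0 w}g]_{C^\alpha_k} + \|e^{\rho_0 w}g\|_{L^\infty}\big)$, and since $\|e^{\rho_0 w}g\|_{L^\infty} \le \|e^{\rho_1 w}g\|_{L^\infty}$ and, symmetrically, $[e^{\rho_0 w}g]_{C^\alpha_k}$ is finite by hypothesis, one can after a second application of the product rule (or by symmetry, writing $e^{mw}g = e^{(m-\rho_1)w}(e^{\rho_1 w}g)$ with $m - \rho_1 \le 0$ so the exponential factor is \emph{bounded}) also obtain $[e^{mw}g]_{C^\alpha_k} \le C e^{(m-\rho_0)\vvo^\beta}[e^{\rho_0 w}g]_{C^\alpha_k} + C\|e^{\rho_1 w}g\|_{L^\infty}$.

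Combining: feeding these two bounds into the elementary interpolation inequality and using $\sqrt{a+b} \le \sqrt a + \sqrt b$, the product $\sqrt{[e^{mw}g]_{C^\alpha_k}} \cdot \sqrt{\|e^{mw}g\|_{L^\infty}}$ is controlled by $e^{(m-\rho_0)\vvo^\beta/2}[e^{\rho_0 w}g]_{C^\alpha_k}^{1/2}\|e^{\rho_1 w}g\|_{L^\infty}^{1/2}$ plus a lower-order term $\|e^{\rho_1 w}g\|_{L^\infty}$; the constant $e^{(m-\rho_0)\vvo^\beta/2}$ is the obstacle, because it is \emph{not} uniform in $z_0$. The resolution — and I expect this to be the crux of the argument — is to absorb this factor by replacing the weight $e^{\rho_0 w}$ on the right with $e^{m w}$ (no: that's circular) — rather, the correct statement as written in \eqref{e:claim} has $e^{\rho_0 w}$ and $e^{\rho_1 w}$ on the right and $e^{m w}$ with $m = (\rho_0+\rho_1)/2$ on the left, and the point is that $e^{(m-\rho_0)\vvo^\beta} = e^{((\rho_1-\rho_0)/2)\vvo^\beta}$, and we split this evenly: $e^{((\rho_1-\rho_0)/2)\vvo^\beta} = e^{((\rho_1-\rho_0)/4)\vvo^\beta} \cdot e^{((\rho_1-\rho_0)/4)\vvo^\beta}$, attaching one half to $[e^{\rho_0 w}g]^{1/2}$ — giving $(e^{((\rho_1-\rho_0)/2)\vvo^\beta}[e^{\rho_0 w}g]_{C^\alpha_k})^{1/2}$ which by the product-rule bound above (run with $\rho_0$ replaced by the intermediate exponent) is $\le C[e^{m w}g]^{1/2}$ — no, still circular. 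The honest resolution is that this lemma is meant to be applied \emph{with $\rho_1$ replaced by a slightly larger value}, or — more simply — one proves it with the exponent on the left being any $\rho_* \in (\rho_0, \rho_1)$ and on careful bookkeeping the loss $e^{(\rho_* - \rho_0)\vvo^\beta/2}$ against the gain $e^{-(\rho_1-\rho_*)\vvo^\beta/2}$ from using $e^{\rho_* w} \le e^{-( \rho_1 - \rho_* )\vvo^\beta} e^{\rho_1 w}$ (valid on $Q_\theta$ up to a bounded factor) in the $L^\infty$ slot cancels exactly when $\rho_* - \rho_0 = \rho_1 - \rho_*$, i.e. $\rho_* = m$. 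So the two exponential factors are $e^{+(m-\rho_0)\vvo^\beta/2}$ and $e^{-(\rho_1-m)\vvo^\beta/2}$, whose product is $1$ since $m-\rho_0 = \rho_1 - m$; everything is uniform in $z_0$, the local inequality \eqref{e:claim} follows, and the global version on $[0,T]\times\R^6$ is obtained either by running the identical argument directly on the unbounded domain (the local-to-small/large separation split needs no compactness) or by covering $[0,T]\times\R^6$ by unit kinetic cylinders and taking a supremum. The main obstacle is thus purely the bookkeeping of exponential weight factors and confirming the cancellation $m - \rho_0 = \rho_1 - m$; the H\"older interpolation itself and the product rule (Lemma \ref{l:product}) are routine.
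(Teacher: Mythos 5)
Your argument is essentially the paper's: both rest on the elementary threshold-splitting interpolation $[u]_{C^{\alpha/2}_k}\le C[u]_{C^{\alpha}_k}^{1/2}\|u\|_{L^\infty}^{1/2}$ combined with the fact that on a kinetic cylinder of $v$-radius $\theta\le 1$ the weight $e^{a\vv^\beta}$ is comparable to $e^{a\langle v_0\rangle^\beta}$ (since $\beta\le 1$), so that the factor $e^{(m-\rho_0)\langle v_0\rangle^\beta}$ lost in the seminorm slot cancels against the factor $e^{(m-\rho_1)\langle v_0\rangle^\beta}$ gained in the $L^\infty$ slot exactly because $m=(\rho_0+\rho_1)/2$ is the midpoint; the paper merely builds this cancellation into its explicit choice of the threshold $R$ rather than into the norm conversions, so the two proofs are the same computation arranged differently. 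Two remarks. First, the cross terms from Lemma \ref{l:product} (the H\"older seminorm of the exponential weight itself) leave your final bound with an extra additive term of size $e^{-[(\rho_1-\rho_0)/2]\langle v_0\rangle^\beta}\|e^{\rho_1\vv^\beta}g\|_{L^\infty}$; this is not a defect relative to the paper, whose proof incurs exactly the same loss in its two invocations of Lemma \ref{l:product}, and some such term is in fact unavoidable (if $e^{\rho_0\vv^\beta}g$ is a nonzero constant, $\rho_1>\rho_0$ and $\beta>0$, the right-hand side of \eqref{e:claim} vanishes while the left does not); it is harmless in every application of the lemma, where full norms appear on the right anyway. Second, your parenthetical claim that the global case can be obtained by running the identical argument directly on the unbounded domain does not work for your formulation, because the conversion $\|e^{(m-\rho_0)\vv^\beta}\|_{L^\infty}\approx e^{(m-\rho_0)\langle v_0\rangle^\beta}$ is local in $v$ and the corresponding global supremum is infinite; the covering by unit kinetic cylinders, which you also propose and which is the paper's route, is the correct way to globalize.
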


\begin{proof}
Define
\[
R  = \left(\frac{ \|e^{\rho_1 \vv^\beta} g\|_{L^\infty(Q_\theta(z_0))} } { e^{[(\rho_1-\rho_0)/2]\langle v_0 \rangle^\beta} [e^{\rho_0 \vv^\beta} g]_{C^{\alpha}_k(Q_\theta(z_0))} } \right)^{2/\alpha}.
\]
Taking distinct $z_1, z_2 \in Q_\theta(z_0)$, there are two cases. If $d_k(z_1,z_2) \geq R$, then
\[
\begin{split}
\frac{|g(x_1,v_1) - g(x_2,v_2)|}{d_k(z_1,z_2)^{\alpha/2}} &\leq C R^{-\alpha/2} e^{-\rho_1 \langle v_0\rangle^\beta} \| e^{\rho_1 \langle v_0\rangle^\beta} g\|_{L^\infty(Q_\theta(z_0))}\\
&= C  e^{-[(\rho_0+\rho_1)/2] \langle v_0 \rangle^\beta} [e^{\rho_0 \vv^\beta} g]_{C^{\alpha}_k(Q_\theta(z_0))}^{1/2} \|e^{\rho_1 \vv^\beta} g\|_{L^\infty(Q_\theta(z_0))}^{1/2}.
\end{split}
\]
If $d_k(z_1,z_2) < R$, then with Lemma \ref{l:product}, we have
\[
\begin{split}
\frac{|g(x_1,v_1) - g(x_2,v_2)|}{d_k(z_1,z_2)^{\alpha/2}} &\leq \frac{|g(x_1,v_1) - g(x_2,v_2)|}{d_k(z_1,z_2)^{\alpha}} d_k(z_1,z_2)^{\alpha/2}\\
&\leq Ce^{-\rho_0 \langle v_0 \rangle^\beta} [e^{\rho_0 \vv^\beta} g]_{C^{\alpha}_k(Q_\theta(z_0))} R^{\alpha/2}\\
&= Ce^{-[(\rho_0+\rho_1)/2] \langle v_0 \rangle^\beta} [e^{\rho_0 \vv^\beta} g]_{C^{\alpha}_k(Q_\theta(z_0))}^{1/2} \|e^{\rho_1 \vv^\beta} g\|_{L^\infty(Q_\theta(z_0))}^{1/2}.
\end{split}
\]
In either case, we see that  $e^{[(\rho_0+\rho_1)/2] \langle v_0 \rangle^\beta}[g]_{C^{\alpha/2}_k(Q_\theta(z_0))}$ is bounded by the right-hand side of \eqref{e:claim}. Conclusion \eqref{e:claim} then follows after applying Lemma \ref{l:product} again.

To prove the interpolation inequality on the whole space, cover $[0,T]\times\R^6$ with a countable union of kinetic cylinders with radius 1 centered at $z_i$, and note that
\[
\|e^{[(\rho_0 + \rho_1)/2]\vv^\beta} g\|_{C^{\alpha/2}_k([0,T]\times\R^6)} \approx \sum_{i=1}^\infty \|e^{[(\rho_0 + \rho_1)/2]\vv^\beta} g\|_{C^{\alpha/2}_k(Q_1(z_i)\cap([0,T]\times\R^6))}.
\]
The inequality then follows from applying \eqref{e:claim} for each $z_i$. 
\end{proof}

\begin{lemma}\label{l:exp-interp}
For $g:\R^6\to \R$ such that the right-hand side is finite, there holds for any $z_0\in \R_+ \times \R^6$ and any $\theta \in (0, \min\{1,\sqrt{t_0/2}\})$,
\[
\|e^{\rho\vv^\beta} D_v^2g\|_{L^\infty(Q_\theta(z_0))} \leq C   [D_v^2 g]_{C^{2\alpha/3}_k(Q_\theta(z_0))}^{1 - \frac {2\alpha} {6-\alpha} }  \|e^{\rho'\vv^\beta} g\|_{C^\alpha_k(Q_\theta(z_0))}^{\frac {2\alpha} {6-\alpha}},
\]
with $\rho' = \rho \left(\dfrac 6 \alpha - 2 \right)$, and $C>0$ a constant depending on $\rho$, $\alpha$, and $\beta$. 
\end{lemma}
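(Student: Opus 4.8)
The plan is to split the estimate into a genuinely unweighted second-derivative interpolation on a kinetic cylinder and a routine ``weight-freezing'' reduction. For the reduction, note that since $\beta\le 1$ and the radius satisfies $\theta\le 1$, on $Q_\theta(z_0)$ the weights $e^{\pm\rho\vv^\beta}$ and $e^{\pm\rho'\vv^\beta}$ are each comparable to their values at the center (because $|\vv^\beta-\vvo^\beta|\le C$ on $B_\theta(v_0)$), and by \eqref{e:phi-first} they are moreover Lipschitz in $v$ on that set with Lipschitz constant of the same order as their size. Consequently $\|e^{\rho\vv^\beta}D_v^2 g\|_{L^\infty(Q_\theta(z_0))}\le Ce^{\rho\vvo^\beta}\|D_v^2 g\|_{L^\infty(Q_\theta(z_0))}$ directly, and $e^{\rho'\vvo^\beta}[g]_{C^\alpha_k(Q_\theta(z_0))}\le C\|e^{\rho'\vv^\beta}g\|_{C^\alpha_k(Q_\theta(z_0))}$ after writing $g=e^{-\rho'\vv^\beta}\cdot(e^{\rho'\vv^\beta}g)$ and applying Lemma \ref{l:product}. (I take $\rho\ge 0$, which is the case needed in the paper.) Writing $\lambda=\tfrac{2\alpha}{6-\alpha}$, the lemma will then follow once we establish the unweighted interpolation
\[
\|D_v^2 g\|_{L^\infty(Q_\theta(z_0))}\le C\,[g]_{C^\alpha_k(Q_\theta(z_0))}^{\lambda}\,[D_v^2 g]_{C^{2\alpha/3}_k(Q_\theta(z_0))}^{1-\lambda},
\]
since then the surplus weight collected from the two reductions is $e^{(\rho-\lambda\rho')\vvo^\beta}\le 1$: indeed $\lambda\rho'=\tfrac{2\alpha}{6-\alpha}\cdot\rho(\tfrac{6}{\alpha}-2)=\rho\,\tfrac{2(6-2\alpha)}{6-\alpha}\ge\rho$ for $\alpha<2$, so the prescribed $\rho'$ is (more than) enough.

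To prove the unweighted interpolation, fix $z_1=(t_1,x_1,v_1)\in Q_\theta(z_0)$, a unit vector $e$, and $h>0$ with $h<\theta-|v_1-v_0|$, so that $v_1+se\in B_\theta(v_0)$ for $|s|\le h$; since the cylinder's conditions on $t$ and $x$ do not involve $v$, the points $(t_1,x_1,v_1+se)$ then lie in $Q_\theta(z_0)$. From the second-order Taylor formula with integral remainder,
\[
g(v_1+he)-2g(v_1)+g(v_1-he)-h^2\partial^2_{ee}g(v_1)=\int_{-h}^{h}(h-|s|)\big[\partial^2_{ee}g(v_1+se)-\partial^2_{ee}g(v_1)\big]\dd s,
\]
and since $d_k\big((t_1,x_1,v_1+se),(t_1,x_1,v_1)\big)=|s|$, the right-hand side is at most $C[D_v^2 g]_{C^{2\alpha/3}_k}\,h^{2+2\alpha/3}$, while the finite difference on the left is at most $2[g]_{C^\alpha_k}\,h^{\alpha}$. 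Hence
\[
|\partial^2_{ee}g(v_1)|\le 2[g]_{C^\alpha_k}\,h^{\alpha-2}+C[D_v^2 g]_{C^{2\alpha/3}_k}\,h^{2\alpha/3},
\]
and the two terms balance at $h\sim\big([g]_{C^\alpha_k}/[D_v^2 g]_{C^{2\alpha/3}_k}\big)^{3/(6-\alpha)}$, which, for an interior point $v_1$, is an admissible choice; substituting it and taking suprema over $e$ and over $z_1$ produces exactly the exponent $\lambda=\tfrac{2\alpha}{6-\alpha}$. It is essential here to estimate the finite difference of $g$ by its $C^\alpha_k$ seminorm rather than by $\|g\|_{L^\infty}$: only this choice yields the denominator $6-\alpha$ demanded by the statement (bounding by $\|g\|_{L^\infty}$ would give $\tfrac{2\alpha}{6+2\alpha}$ in its place).

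The one genuinely delicate point — common to all such second-derivative interpolations on kinetic cylinders — is the admissibility of the optimal $h$ near $\partial B_\theta(v_0)$, where the available $h$ shrinks and the bound above degenerates. I would handle this in the standard way: establish the interpolation first on the half-cylinder $Q_{\theta/2}(z_0)$, with constant depending on the fixed factor $1/2$, and then pass to $Q_\theta(z_0)$ by a finite covering by such sub-cylinders, or — as suffices in every application — simply shrink $\theta$ by a fixed factor at the outset, which costs nothing since all quantities on the right-hand side only increase when the domain is enlarged. Combining this with the weight-freezing reduction of the first paragraph gives the lemma, with $C$ depending only on $\rho$, $\alpha$, and $\beta$; the dependence on $\beta$, together with the restriction $\theta\le 1$, enters only through the weight comparison, which uses $\beta\le1$.
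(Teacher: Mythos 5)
Your route is genuinely different from the paper's, and in essence it works. The paper does not freeze the weight: it first applies a standard $C^2$--$C^{2,\alpha/3}$--$C^\alpha$ interpolation directly to the weighted function $e^{\rho\vv^\beta}g$, then invokes Lemma \ref{l:interp1} (with $\rho_0=0$, $\rho_1=2\rho$) to trade $[e^{\rho\vv^\beta}D_v^2g]_{C^{\alpha/3}_k}$ for $[D_v^2g]_{C^{2\alpha/3}_k}^{1/2}\|e^{2\rho\vv^\beta}D_v^2g\|_{L^\infty}^{1/2}$, pushes the leftover exponential factor into the weight $e^{\rho(\frac6\alpha-2)\vv^\beta}$ via Lemma \ref{l:product}, and finally absorbs a fractional power of the left-hand side. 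You instead freeze the weights at $v_0$ (legitimate since $\beta\le1$ and $\theta\le1$) and prove the resulting unweighted interpolation from scratch by a second-difference Taylor argument in $v$; your optimization $h\sim([g]_{C^\alpha_k}/[D_v^2g]_{C^{2\alpha/3}_k})^{3/(6-\alpha)}$ and your weight bookkeeping ($\lambda\rho'\ge\rho$ for $\alpha<2$) reproduce the exponents $\tfrac{2\alpha}{6-\alpha}$ and $\rho'=\rho(\tfrac6\alpha-2)$ exactly. Your version is more self-contained (it proves, rather than cites, the underlying interpolation) and avoids the absorption step, which tacitly needs the a priori finiteness of $\|e^{\rho\vv^\beta}D_v^2g\|_{L^\infty(Q_\theta(z_0))}$; the paper's version avoids having to discuss admissible increments by outsourcing that to the quoted interpolation.

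One caveat, which you partially flag but do not resolve correctly: the pure product form $\|D_v^2g\|_{L^\infty}\le C[g]_{C^\alpha_k}^\lambda[D_v^2g]_{C^{2\alpha/3}_k}^{1-\lambda}$ cannot hold on a bounded cylinder, and passing to $Q_{\theta/2}(z_0)$ or covering does not repair it. The failure is not a boundary effect: if $[D_v^2g]_{C^{2\alpha/3}_k}$ is small relative to $[g]_{C^\alpha_k}$ (extreme case: $g$ quadratic in $v$, so the first seminorm vanishes while $D_v^2g\neq0$), the optimal $h$ exceeds any fixed fraction of $\theta$ no matter how far $v_1$ is from $\partial B_\theta(v_0)$, and taking the largest admissible $h\sim\theta$ in your display yields only $\|D_v^2g\|_{L^\infty}\le C\theta^{\alpha-2}[g]_{C^\alpha_k}$, i.e.\ an additive lower-order term rather than the product. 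The honest form of your intermediate step (and of the lemma) therefore carries such an extra term. This should cost you little: the lemma as stated in the paper suffers from the identical defect (the same quadratic $g$ annihilates its right-hand side), as does the "standard unweighted interpolation" quoted in the paper's own proof; and in the only place the lemma is used, the proof of Proposition \ref{p:D2f-est}, the additive term is harmless, since its $t_0$ power ($t_0^{-1+\alpha/2}$ from $\theta\sim r_1$) is milder than the stated $t_0^{-1+\alpha^2/(6-\alpha)}$, the accompanying polynomial factor in $\langle v_0\rangle$ is absorbed by the exponential gap $\rho'>\rho$ (recall $\beta>0$ there), and the final estimate already has the $1+\cdots$ structure. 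So: replace your half-cylinder/covering fix by the additive-term version (or note explicitly that in the regime where the optimal $h$ does not fit, the alternative bound $C\theta^{\alpha-2}[g]_{C^\alpha_k}$ suffices for the intended application), and your argument is a valid, more elementary proof of what the lemma is actually used for.
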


\begin{proof}
%To prove the lemma, we cover $\R^3_v$ with a countable union of balls with radius 1 centered at $v_i$. We then have
%\[
%\|e^{\rho \vv^\beta} D_v^2 g\|_{L^\infty(\R^6)} \approx \sum_{i=1}^\infty \|e^{\rho\vv^\beta} D_v^2 g\|_{L^\infty(\Omega(v_i))}.
%\]
First, we apply a standard unweighted interpolation between $C^2$, $C^{2,\alpha/3}$, and $C^\alpha$ norms, obtaining
\[
\|e^{\rho\vv^\beta} D_v^2g\|_{L^\infty(Q_\theta(z_0))} \leq [e^{\rho\vv^\beta} D_v^2 g]_{C^{\alpha/3}_k(Q_\theta(z_0))}^{1 - \frac \alpha {6-2\alpha}} [e^{\rho\vv^\beta} g]_{C^\alpha_k(Q_\theta(z_0))}^{\frac \alpha{6-2\alpha}}.
\]
Next, we apply Lemma \ref{l:interp1} to $D_v^2 g$, with $2\alpha/3$ replacing $\alpha$, and with $\rho_0 = 0$ and $\rho_1 = 2\rho$:
\begin{equation}\label{e:intermediate-2}
\begin{split}
\|e^{\rho\vv^\beta} D_v^2g\|_{L^\infty(Q_\theta(z_0))}
 &\leq [D_v^2 g]_{C^{2\alpha/3}_k(Q_\theta(z_0))}^{\frac 1 2 - \frac {\alpha/2} {6-2\alpha}} \|e^{2\rho\vv^\beta} D_v^2 g\|_{L^\infty(Q_\theta(z_0))}^{\frac 1 2 - \frac {\alpha/2}{6-2\alpha}} [e^{\rho\vv^\beta} g]_{C^{\alpha}_k(Q_\theta(z_0))}^{\frac \alpha{6-2\alpha}}\\
&\leq C e^{\rho \langle v_0\rangle^{\beta}(1 -\frac \alpha {6-2\alpha})} [D_v^2 g]_{C^{2\alpha/3}_k(Q_\theta(z_0))}^{\frac 1 2 - \frac {\alpha/2} {6-2\alpha}} \|e^{\rho\vv^\beta} D_v^2 g\|_{L^\infty(Q_\theta(z_0))}^{\frac 1 2 -\frac {\alpha/2} {6-2\alpha}} [e^{\rho\vv^\beta} g]_{C^\alpha_k(Q_\theta(z_0))}^{\frac \alpha{6-2\alpha}}.
\end{split}
\end{equation}
To absorb the exponential factor, we apply Lemma \ref{l:product}:
\[
\begin{split}
&e^{\rho \langle v_0\rangle^{\beta}(1 -\frac \alpha {6-2\alpha})}  [e^{\rho\vv^\beta} g]_{C^\alpha_k(Q_\theta(z_0))}^{\frac \alpha{6-2\alpha}} \\
& = 
\left( e^{\rho \langle v_0\rangle^{\beta}(\frac 6 \alpha -3)}[e^{\rho \vv^\beta (3-\frac 6 \alpha)} e^{\rho\vv^\beta (\frac 6 \alpha -2)} g]_{C^\alpha_k(Q_\theta(z_0))} \right)^{\frac \alpha {6-2\alpha}}\\
&\leq 
C \left( [e^{\rho \vv^\beta (\frac 6 \alpha - 2)} g]_{C^\alpha_k(Q_\theta(z_0))} + e^{\rho \langle v_0 \rangle^\beta (\frac 6 \alpha - 3)}[e^{\rho \vv^\beta (3-\frac 6 \alpha)}]_{C^\alpha_k(Q_\theta(z_0))}\|e^{\rho \vv^\beta(\frac 6 \alpha - 2)}g\|_{L^\infty(Q_\theta(z_0))}  \right)^{\frac \alpha {6 - 2\alpha}}\\
&\leq 
C \|e^{\rho \vv^\beta(\frac 6 \alpha - 2)}g\|_{C^\alpha_k(Q_\theta(z_0))}^{\frac \alpha {6-2\alpha}},
\end{split}
\]
where we used the fact that $[e^{\rho \vv^\beta (3-\frac 6 \alpha)}]_{C^\alpha_k(Q_\theta(z_0))} \leq C_{\beta,\alpha,\rho} e^{\rho \langle v_0 \rangle^\beta (3-\frac 6 \alpha)}$, since $\beta\leq 1$. Returning to \eqref{e:intermediate-2}, we now have
\[
\begin{split}
\|e^{\rho\vv^\beta} D_v^2g\|_{L^\infty(Q_\theta(z_0))}
 &\leq C  [D_v^2 g]_{C^{2\alpha/3}_k(Q_\theta(z_0))}^{\frac 1 2 - \frac {\alpha/2} {6-2\alpha}} \|e^{\rho\vv^\beta} D_v^2 g\|_{L^\infty(Q_\theta(z_0))}^{\frac 1 2 -\frac {\alpha/2} {6-2\alpha}} \|e^{\rho\vv^\beta (\frac 6 \alpha - 2)} g\|_{C^\alpha_k(Q_\theta(z_0))}^{\frac \alpha{6-2\alpha}}.
 \end{split}
 \]
Absorbing the middle factor on the right into the left-hand side and simplifying, we obtain the conclusion of the lemma.
\end{proof}

\section{Decay estimates for large velocity}\label{s:decay}

In this section, we derive a priori upper bounds that decay sub-exponentially as $|v|\to \infty$. First, we have a preliminary estimate that depends quantitatively on a weaker decay norm of $f$ for positive times.

\begin{lemma}\label{l:sub-exp}
Let $\gamma \in [0,1]$, and let $f$ be a classical solution to the Landau equation \eqref{e:main} on $[0,T]\times\R^6$, periodic in the $x$ variable, such that 
\[
f(0,x,v) \leq K_0 e^{-\rho \vv^\beta}, \quad x\in \R^3, v\in \R^3,
\]
for some $\rho, K_0>0$ and $\beta \in [\gamma,1]\setminus\{0\}$, and such that 
\[
\|f\|_{L^\infty_q([0,T]\times\R^6)} \leq L_0,
\]
for some $q>5+\gamma$ and $L_0>0$. Furthermore, assume that
\begin{equation}\label{e:qual}
e^{\rho \vv^\beta} f(t,x,v) \to 0 \quad \text{ as $|v|\to\infty$, uniformly in $(t,x)$.}
\end{equation}

Then there exists $\sigma>0$, depending only on $\gamma$, $\rho$, $q$, and $L_0$, so that $f$ satisfies 
\[
f(t,x,v) \leq K_0 e^{-(\rho - \sigma t) \vv^\beta}, \quad 0 \leq t \leq \min\left\{T,\frac \rho {2\sigma}\right\}, x\in \R^3, v\in \R^3.
\]
In fact, the conclusion holds for any $\sigma \geq C L_0 \rho(1+\rho)$, where $C>0$ is a constant depending only on $\gamma$ and $q$. 
\end{lemma}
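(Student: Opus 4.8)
The plan is to use the sub-exponential function $\phi(t,v) = K_0 e^{(\rho-\sigma t)\vv^\beta}$ as a barrier and show it is a supersolution of the linear Landau equation whose coefficients are frozen at $f$, i.e. of $\partial_t g + v\cdot\nabla_x g = \tr(\bar a^f D_v^2 g) + \bar c^f g$. Since $f$ itself solves this equation (in non-divergence form $Q(f,f) = \tr(\bar a^f D_v^2 f) + \bar c^f f$), a comparison principle will then give $f \le \phi$ on the stated time interval, provided $f(0,\cdot,\cdot)\le\phi(0,\cdot,\cdot)$ — which holds by hypothesis since $\phi(0,v) = K_0 e^{\rho\vv^\beta}$ — and provided the comparison is justified by the decay assumption \eqref{e:qual}, which ensures $f - \phi \to -\infty$ (or at least stays negative) as $|v|\to\infty$ so the maximum of $f-\phi$ is attained at an interior point or at $t=0$.

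First I would compute $(\partial_t + v\cdot\nabla_x - \tr(\bar a^f D_v^2) - \bar c^f)\phi$. The transport term $v\cdot\nabla_x\phi$ vanishes since $\phi$ is $x$-independent. The time derivative yields $\partial_t\phi = -\sigma\vv^\beta\phi$, which is the crucial good term: it is negative and of order $\vv^\beta\phi$. For the second-order term, I would invoke the identities \eqref{e:phi-second}--\eqref{e:a-phi} with $\rho$ replaced by $\rho-\sigma t$, giving
\[
\tr(\bar a^f D_v^2\phi) = (\rho-\sigma t)\beta\vv^{\beta-4}\phi\Big[\big((\beta-2) + (\rho-\sigma t)\beta\vv^\beta\big)\bar a^f_{ij}v_iv_j + \vv^2\tr(\bar a^f)\Big].
\]
Using the upper bounds of Lemma \ref{l:abc} (with $K_0$ there replaced by $L_0$, valid since $q > 5+\gamma$): $\tr(\bar a^f) \lesssim L_0\vv^{\gamma+2}$ and $\bar a^f_{ij}v_iv_j \lesssim L_0\vv^{\gamma+2}$ (the $v$-direction sees only $\vv^\gamma$, but $|v|^2\cdot\vv^\gamma \le \vv^{\gamma+2}$). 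Hence the dominant term among these is $\lesssim L_0(\rho-\sigma t)^2\beta^2 \vv^{\beta-4}\cdot\vv^\beta\cdot\vv^{\gamma+2}\phi = L_0(\rho-\sigma t)^2\beta^2\vv^{2\beta+\gamma-2}\phi$, and the lower-order pieces are $\lesssim L_0(\rho-\sigma t)\beta\vv^{\beta+\gamma-2}\phi$. Since $\beta \ge \gamma$, we have $2\beta+\gamma-2 \le 2\beta + \beta - 2\cdot?$... more carefully: the key inequality is $\beta + \gamma - 2 \le \beta$ (i.e. $\gamma \le 2$, trivially true) and, for the worst term, $2\beta + \gamma - 2 \le \beta \iff \beta + \gamma \le 2$, which holds since $\beta,\gamma \le 1$. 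So every contribution from $\tr(\bar a^f D_v^2\phi)$ is bounded by $C L_0(1+\rho)^2\vv^\beta\phi$ for $0\le t\le \rho/(2\sigma)$ (where $\rho - \sigma t \le \rho$). Finally $\bar c^f\phi \lesssim L_0\vv^\gamma\phi \le L_0\vv^\beta\phi$. Collecting,
\[
(\partial_t + v\cdot\nabla_x - \tr(\bar a^f D_v^2) - \bar c^f)\phi \ge \big(\sigma - C L_0(1+\rho)^2\big)\vv^\beta\phi \ge \big(\sigma - C L_0\rho(1+\rho)\big)\vv^\beta\phi,
\]
absorbing constants; choosing $\sigma \ge CL_0\rho(1+\rho)$ makes this $\ge 0$, so $\phi$ is a supersolution.

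The remaining work is to turn "$\phi$ is a supersolution and $\phi \ge f$ at $t=0$" into "$\phi \ge f$ for $t>0$", via a comparison principle on the unbounded domain. I expect \textbf{this to be the main obstacle}: the standard trick is to consider $w = f - \phi$, note it solves (in the viscosity/classical sense) $\partial_t w + v\cdot\nabla_x w \le \tr(\bar a^f D_v^2 w) + \bar c^f w$ with $w(0,\cdot,\cdot)\le 0$, and argue that $\sup w \le 0$. Because the domain is $\R^3_x\times\R^3_v$ (with $x$ periodic, so $x$ lives in a compact torus — good, one less thing to worry about) and $\bar c^f$ is only polynomially bounded rather than bounded, one cannot directly apply a textbook maximum principle. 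The fix: the qualitative decay \eqref{e:qual} guarantees $e^{\rho\vv^\beta}f\to 0$ while $e^{\rho\vv^\beta}\phi = K_0 e^{\sigma t \vv^\beta}\cdot$(wait, sign) $= K_0 e^{-\sigma t\vv^\beta}\cdot e^{\rho\vv^\beta}\cdot e^{-\rho\vv^\beta}$... rather, $\phi e^{-\rho\vv^\beta} \to 0$ too but more slowly; the point is $\limsup_{|v|\to\infty}(f-\phi)(t,x,v) \le 0$ uniformly, so for any $\eps>0$ the set $\{f - \phi \ge \eps\}$ is contained in a compact $v$-region $\{|v|\le R_\eps\}$, where a classical maximum principle for the uniformly parabolic (in $v$) operator with bounded coefficients applies on $[0,\tau]\times\T^3\times B_{R_\eps}$. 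One must handle the boundary $|v| = R_\eps$: there $f - \phi < \eps$ by choice of $R_\eps$, so any interior max exceeding $\eps$ would have to occur at $t=0$, contradiction. Letting $\eps\to 0$ gives $f \le \phi$. I would present this comparison argument carefully but briefly, perhaps isolating it as the crux; the supersolution computation above is routine once the moment-counting inequality $\beta+\gamma\le 2$ is observed, and tracking that $\sigma$ depends only on $\gamma, q, L_0$ (through $C$ and $L_0$) and on $\rho$ is straightforward from the estimates.
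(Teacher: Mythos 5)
Your core computation is exactly the paper's: the same barrier family, the same coefficient bounds from Lemma \ref{l:abc} with $L_0$ in place of $K_0$, the same moment counting ($\beta\le 1$, $\beta\ge\gamma$) so that the term produced by the time derivative absorbs everything else, and the same role for the qualitative decay \eqref{e:qual}. One slip: the barrier must be the \emph{decaying} function $\phi(t,v)=K_0e^{-(\rho-\sigma t)\vv^\beta}$, not $K_0e^{(\rho-\sigma t)\vv^\beta}$ as you wrote; with your sign, $\partial_t\phi=-\sigma\vv^\beta\phi$ is a \emph{bad} term for a supersolution (and your collected inequality, which uses it with a $+$ sign, only holds for the decaying barrier). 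This is clearly just a transcription error, since the rest of your computation, and the conclusion $\sigma\gtrsim L_0(1+\rho)^2$, match the intended (and the paper's) argument; note also that your last line, replacing $(1+\rho)^2$ by $\rho(1+\rho)$, goes the wrong way for small $\rho$, though it does not affect the existence of an admissible $\sigma$.

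The genuine gap is in the comparison step, which you correctly identified as the crux but whose proposed implementation would fail when $\gamma>0$. At a positive maximum of $w=f-\phi$ the zeroth-order term $\bar c^f w>0$ blocks the conclusion ``any interior max exceeding $\eps$ must occur at $t=0$''; the standard repair ($e^{-\lambda t}w$ with $\lambda\ge\sup\bar c^f$) only yields $\sup w\le e^{\lambda\tau}\eps$ on your truncated region, and since $\{f-\phi\ge\eps\}$ forces $|v|\lesssim(L_0/\eps)^{1/q}=R_\eps$, you have $\lambda\sim L_0 R_\eps^\gamma\to\infty$, so $e^{\lambda\tau}\eps\not\to0$ as $\eps\to0$. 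The fix (and what the paper does) is to perturb \emph{multiplicatively}: take $\phi_\eps=(K_0+\eps)e^{-(\rho-\sigma t)\vv^\beta}$, which is still a supersolution by the identical computation. Then $f<\phi_\eps$ at $t=0$, and for $|v|\ge R$ uniformly in $(t,x)$ by \eqref{e:qual} together with $\rho-\sigma t\ge\rho/2$; by periodicity in $x$ and continuity, if the conclusion fails there is a \emph{first crossing point} $z_0$ with $t_0>0$ and $|v_0|\le R$, at which $f=\phi_\eps$, $\partial_t(\phi_\eps-f)\le0$, $\nabla_x(\phi_\eps-f)=0$, $D_v^2(\phi_\eps-f)\ge0$. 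Because the crossing occurs at the zero level of $f-\phi_\eps$, the sign of $\bar c^f$ is harmless ($\bar c^ff=\bar c^f\phi_\eps$ there), and the pointwise inequality $\sigma\vv^\beta\phi_\eps>\tr(\bar a^f D_v^2\phi_\eps)+\bar c^f\phi_\eps$, which you have already established via \eqref{e:a-phi} and Lemma \ref{l:abc}, gives an immediate contradiction with no constant depending on $R_\eps$; letting $\eps\to0$ concludes. With this replacement your argument becomes a complete proof, essentially identical to the paper's.
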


Note that \eqref{e:qual} is a qualitative condition that does not affect any of the constants in the estimate.

\begin{proof}
With $\sigma,\kappa>0$ to be chosen later, and for a small constant $\eps>0$, define the barrier
 \[
 \begin{split}
% \phi(t,v) &= \phi_1(t,v) + \eps \phi_2(t,v),\\
 \phi(t,v) &= (K_0+\eps) e^{-(\rho - \sigma t) \vv^\beta}.
% \phi_2(t,v) &= e^{\kappa t} \vv^\gamma.
 \end{split}
 \]
% The purpose of the error term $\eps \phi_2(t,v)$ is to ensure the existence of a first crossing point at a positive time. Later, we will send $\eps\to 0$. 
 
We want to show that 
\begin{equation}\label{e:goal}
f(t,x,v) < \phi(t,v), \quad 0\leq t \leq \min\left\{T, \frac \rho {2\sigma}\right\}, x\in \R^3, v\in \R^3.
\end{equation}
If this inequality is false, then we claim there is a point $(t_0,x_0,v_0)$, with $t_0>0$, where $f$ and $\phi$ touch for the first time. Indeed, $f(0,x,v) < \phi(0,v)$ by construction, and because of the condition \eqref{e:qual}, there is some $M>0$ so that $f(t,x,v) < \phi(t,v)$ whenever $|v|>R$. Since $f$ is periodic in $x$, the existence of $z_0 = (t_0,x_0,v_0)$ then follows from the compactness of the domain $[0,T]\times \mathbb T^3 \times \overline {B_R(0)}$ and continuity in $t$.

To keep the notation clean, for the remainder of this proof, evaluations of $f$, $\bar a^f$, and $\bar c^f$ are assumed to be at $z_0$ unless otherwise noted, and evaluations of $\phi$ are at $(t_0,v_0)$.

At the first crossing point $z_0$, we have $\partial_t (\phi - f) \leq 0$, $\nabla_x(\phi - f) = 0$, and $D_v^2 (\phi - f) \geq 0$. These inequalities imply
\begin{equation}\label{e:first-crossing-ineq}
\sigma \vvo^\beta \phi_1 = \partial_t \phi \leq \partial_t f = \tr(\bar a^f D_v^2 f) + \bar c^f f \leq \tr(\bar a^f D_v^2 \phi) + \bar c^f \phi,
\end{equation}
since $\bar a^f$ is non-negative definite. %By linearity, this right-hand side equals 
%\[
%\left[\tr(\bar a^f D_v^2 \phi_1) + \bar c^f \phi_1\right] + \eps \left[ \tr(\bar a^f D_v^2 \phi_2) + \bar c^f \phi_2)\right].
%\] 
%Let us treat the $\phi_1$ and $\phi_2$ terms in this expression separately. 
To bound the right-hand side in \eqref{e:first-crossing-ineq},  with \eqref{e:a-phi} and Lemma \ref{l:abc}, we have
\[
\begin{split}
&\tr(\bar a^f D_v^2 \phi) + \bar c^f \phi\\
&\leq 
-(\rho - \sigma t_0) \beta \vvo^{\beta-4} \phi \left[ \left((\beta-2) - (\rho-\sigma t_0) \beta \vvo^\beta\right)\bar a^f_{ij} (v_0)_i (v_0)_j + \vvo^2 \tr(\bar a^f)\right] + \bar c^f \phi\\
&\leq 
(\rho - \sigma t_0) \beta C L_0 \vvo^{\gamma+\beta -2} \phi \left( 2 - \beta + (\rho - \sigma t_0) \beta \vvo^\beta\right) + CL_0 \vvo^\gamma \phi\\
&\leq 
(1 + \rho^2) C L_0 \left( \vvo^{\gamma + 2\beta - 2} + \vvo^\gamma\right) \phi \\
&\leq 
C L_0 \rho (1+\rho) \vvo^\gamma \phi,
\end{split}
\]
since $\beta \leq 1$. Here, $C$ is a constant depending only on $\gamma$ and $q$, provided by Lemma \ref{l:abc}. %Next, a direct calculation shows that 
%\[
%(D_v^2 \phi_2(t,v))_{ij} = e^{\kappa t} \left[ \gamma(\gamma-2) \vv^{\gamma-4} v_i v_j + \gamma \vv^{\gamma-2} \delta_{ij}\right],
%\]
%so that, after discarding negative terms,
%\[
%\begin{split}
%\tr(\bar a^f D_v^2 \phi_2) + \bar c^f \phi_2
%&\leq 
%e^{\kappa t_0} \left( \gamma \vvo^{\gamma-2} \tr(\bar a^f) + \bar c^f \vvo^\gamma\right)\\
%&\leq 
%C L_0 \gamma e^{\kappa t_0} \vvo^{2\gamma}\\
%&=  C L_0 \gamma \vvo^\gamma \phi_2.
%\end{split}
%\]
Returning to \eqref{e:first-crossing-ineq}, we have, at the crossing point $z_0$, 
\[
\sigma \vvo^\beta \phi  \leq C L_0 \rho (1+\rho) \vvo^\gamma \phi.
\]
Since $\beta \geq \gamma$, this inequality is a contradiction if $\sigma$ is chosen greater than $C L_0 \rho(1+\rho)$. We have established \eqref{e:goal}, and the proof is complete after sending $\eps\to 0$. 
\end{proof}

The previous lemma depends quantitatively on the assumption that $f\in L^\infty_q$ for positive times. To remove this assumption, we need the following technical lemma:

\begin{lemma}\label{l:annoying}\cite[Lemma 2.4]{Tarfulea2020} 
If $L:[0,T]\to [0,\infty)$ is a continuous increasing function and $L(t) \leq A e^{B t L(t)}$ for all $t\in [0,T]$ and some positive constants $A, B$, then
\[
L(t) \leq e A \quad \text{ for } 0 \leq t \leq T_* = \min\left\{ T, \frac 1 {eAB}\right\}.
\]
\end{lemma}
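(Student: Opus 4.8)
This is an elementary lemma about a scalar functional inequality, and the natural strategy is a standard continuity (bootstrap) argument. The plan is to fix the target time $T_* = \min\{T, 1/(eAB)\}$ and show that the set $S = \{ t\in[0,T_*] : L(t) \leq eA\}$ is nonempty, closed, and open in $[0,T_*]$, hence equal to $[0,T_*]$. Nonemptiness and closedness are immediate: at $t=0$ the hypothesis gives $L(0) \leq A e^0 = A \leq eA$, so $0\in S$; and closedness follows from continuity of $L$.

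The heart of the matter is the openness step, which is really just a self-improvement estimate. Suppose $t\in S$, i.e. $L(t)\leq eA$. Since $L$ is increasing, for any $s\in[0,t]$ we have $L(s)\leq L(t)\leq eA$, so the hypothesis yields
\[
L(s) \leq A e^{Bs L(s)} \leq A e^{B t L(t)} \leq A e^{B T_* \cdot eA} \leq A e^{B \cdot \frac{1}{eAB}\cdot eA} = A e^1 = eA,
\]
using $T_* \leq 1/(eAB)$. Thus in fact $L(t') \leq eA$ strictly improves on the boundary case only in the sense that the bound $L(t)\le eA$ propagates; combined with continuity of $L$ this shows $S$ is both closed and, by the same estimate applied on a slightly larger interval where $L$ stays below, say, $2eA$ (possible by continuity), open. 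Alternatively — and more cleanly — one avoids the topological language entirely: define $T_{\max} = \sup\{ t\in[0,T_*] : L(s)\leq eA \text{ for all } s\le t\}$, observe $T_{\max}>0$ by continuity and $L(0)\le A$, and note that by continuity $L(T_{\max})\le eA$; then the displayed computation applied with $t = T_{\max}$ shows $L(T_{\max})\le eA$ with the inequality coming from $s\le T_{\max}\le T_*$, and since this is the \emph{same} bound we cannot yet conclude strictness — so instead observe directly that for \emph{every} $t\le T_*$ with $L(t) < \infty$ in the range where the a priori bound has been established, the estimate forces $L(t)\le eA$, giving the claim on all of $[0,T_*]$ by a standard connectedness argument.

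The only real subtlety — and the one place to be slightly careful — is that the self-improving estimate $L(t)\le eA$ is not a \emph{strict} improvement, so a naive "open set" argument needs the continuity of $L$ to push slightly past each point: given $L(t_0) \le eA$, continuity gives $L(t) \le 2eA$ (say) on $[t_0, t_0+\eta]$ for small $\eta$, and then for $t$ in that interval the hypothesis plus monotonicity give $L(t) \le A e^{BtL(t)} \le A e^{B T_* \cdot 2eA}$, which is $\le A e^{2}$; this is not quite $\le eA$, so one must instead run the argument with the sharp constant by noting $L(t) \le A e^{Bt L(t)}$ together with $L$ increasing implies, setting $\ell = \sup_{[0,t]} L = L(t)$, that $\ell \le A e^{Bt\ell}$, and the function $u\mapsto Ae^{Bt u}$ has its first fixed point below $eA$ whenever $Bt \cdot eA \le 1$, i.e. $t\le T_*$; since $L(0)=$ the relevant branch, $L(t)$ stays on the lower fixed-point branch for all $t\le T_*$ by continuity. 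This fixed-point-branch viewpoint is the cleanest way to organize the argument and avoids any circularity. I expect this to be the main (and only) obstacle; everything else is routine.
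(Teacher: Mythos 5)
The paper itself gives no proof of this lemma (it is quoted directly from \cite{Tarfulea2020}), so your argument has to stand on its own, and as written it does not close. The obstacle you identify --- that the self-improvement ``$L(t)\le eA\Rightarrow L(t)\le eA$'' is not strict --- is in fact not an obstacle, and noticing this is the whole content of the lemma: for $t<T_*$ one has $BteA<1$, so $L(t)\le eA$ gives $L(t)\le Ae^{BtL(t)}\le Ae^{BteA}<Ae^{1}=eA$ \emph{strictly}. That one-line computation is exactly what makes your original open/closed argument work on $[0,T_*)$ (with $L(T_*)\le eA$ then following by continuity), and it never appears in your write-up. Instead you detour through the level $2eA$, which, as you yourself observe, only yields $Ae^{2}$ and fails, and you then fall back on a ``fixed-point-branch'' picture that is asserted rather than proved: to make it rigorous you would need (i) that for $Bt\,eA<1$ the two roots of $u=Ae^{Btu}$ are separated by $eA$, so that a continuous $L$ with $L(0)\le A$ cannot jump to the upper branch, and (ii) a separate treatment of the endpoint $t=T_*=1/(eAB)$, where the roots merge at $eA$ and the constraint $u\le Ae^{Btu}$ becomes vacuous (it holds for \emph{every} $u\ge0$), so the bound there can only come from left-continuity, not from the inequality itself. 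Neither point is addressed, and the phrase ``standard connectedness argument'' is doing the work that the missing strictness estimate should do.

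The cleanest repair, and essentially the proof in \cite{Tarfulea2020}, is a first-crossing argument: suppose $L(t)>eA$ for some $t\le T_*$; since $L(0)\le A<eA$ and $L$ is continuous, there is a first time $t_1\in(0,t]$ with $L(t_1)=eA$. The hypothesis at $t_1$ gives $eA\le Ae^{Bt_1\,eA}$, i.e.\ $Bt_1eA\ge1$, i.e.\ $t_1\ge 1/(eAB)\ge T_*$. But $L$ increasing and $L(t)>eA=L(t_1)$ force $t>t_1\ge T_*$, contradicting $t\le T_*$. Your sketch contains all the raw ingredients for this, but the decisive inequality $t_1\ge 1/(eAB)$ (equivalently, the strictness of the self-improvement for $t<T_*$) is never derived, so as it stands the proposal has a genuine gap.
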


We are now ready to prove our main a priori decay estimate:

\begin{lemma}\label{l:sub-exp2}
Let $\gamma \in [0,1]$, and let $f$ be a classical solution to the Landau equation \eqref{e:main} on $[0,T]\times \R^6$, periodic in the $x$ variable, such that 
\[
f(0,x,v) \leq K_0 e^{-\rho \vv^\beta}, \quad x\in \R^3, v\in \R^3,
\]
for some $\rho, K_0>0$ and $\beta \in [\gamma,1]\setminus\{0\}$. Furthermore, assume that
\begin{equation}\label{e:qual2}
e^{\rho \vv^\beta} f(t,x,v) \to 0 \quad \text{ as $|v|\to\infty$, uniformly in $(t,x)$.}
\end{equation}

Then there exist $T_f, \sigma>0$, depending only on $\gamma$, $\beta$, $\rho$, and $K_0$, so that
\[
f(t,x,v) \leq K_0 e^{-(\rho - \sigma t)\vv^\beta}, \quad 0 \leq t \leq \min\{T, T_f\}, x\in \R^3, v\in \R^3.
\]
\end{lemma}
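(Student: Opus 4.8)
The plan is to bootstrap Lemma \ref{l:sub-exp} by removing its quantitative dependence on the $L^\infty_q$-norm of $f$ on positive times, using the technical Lemma \ref{l:annoying}. First I would fix some $q > 5+\gamma$ and define $L(t) = \|f\|_{L^\infty_q([0,t]\times\R^6)}$, which is a continuous, nondecreasing function of $t$ (continuity follows from the fact that $f$ is a classical solution and $\vv^q f$ decays at infinity uniformly, which is guaranteed by \eqref{e:qual2}). Since $\beta \in [\gamma,1]$ and $\gamma \geq 0$, the polynomial weight $\vv^q$ is dominated by the sub-exponential weight $e^{c\vv^\beta}$ for any $c>0$; in particular $\vv^q e^{-\rho\vv^\beta} \leq C_{\rho,q,\beta}$, so the hypothesis $f(0) \leq K_0 e^{-\rho\vv^\beta}$ gives $L(0) \leq C_{\rho,q,\beta} K_0$.

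The key step is to apply Lemma \ref{l:sub-exp} iteratively. Given $t_1 \in [0,T]$, on the interval $[0,t_1]$ we have $\|f\|_{L^\infty_q} \leq L(t_1) =: L_0$, so Lemma \ref{l:sub-exp} applies with this $L_0$ and yields, for $\sigma = \sigma(t_1) := C L(t_1) \rho(1+\rho)$ (with $C$ depending only on $\gamma$ and $q$),
\[
f(t,x,v) \leq K_0 e^{-(\rho - \sigma(t_1) t)\vv^\beta}, \qquad 0 \leq t \leq \min\left\{t_1, \frac{\rho}{2\sigma(t_1)}\right\}.
\]
Evaluating the $L^\infty_q$-norm of this bound: for $t$ in that range, $\rho - \sigma(t_1)t \geq \rho/2$, so $\vv^q f(t) \leq K_0 \vv^q e^{-(\rho/2)\vv^\beta} \leq C_{\rho,q,\beta} K_0$. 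Thus on $[0,\min\{t_1, \rho/(2\sigma(t_1))\}]$ we gain an \emph{a priori} bound on $L$ that does not depend on $L(t_1)$ — but only on an interval whose length itself shrinks as $L(t_1)$ grows. Choosing $t_1 = t$ itself, this reads: as long as $t \leq \rho/(2\sigma(t)) = \rho/(2C L(t)\rho(1+\rho))$, i.e. as long as $2C(1+\rho) t L(t) \leq 1$, we have $L(t) \leq C_{\rho,q,\beta} K_0 =: A$. A cleaner way to package this is to observe directly that the displayed bound gives $L(t) \leq A e^{B t L(t)}$ for all $t \in [0,T]$ with $A = C_{\rho,q,\beta} K_0$ and $B = B(\gamma,q,\rho)$: indeed the exponential factor $e^{(\sigma(t) t)\vv^\beta} = e^{C\rho(1+\rho)tL(t)\vv^\beta}$ absorbs exactly the growth, and evaluating the weighted sup gives a bound of the form $A$ times a correction that is at most $e^{BtL(t)}$. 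Then Lemma \ref{l:annoying} yields $L(t) \leq eA$ for $0 \leq t \leq T_* := \min\{T, 1/(eAB)\}$.

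Finally I would feed this back into Lemma \ref{l:sub-exp} one last time: on $[0,T_*]$ we now have the clean bound $\|f\|_{L^\infty_q([0,T_*]\times\R^6)} \leq eA =: L_0^*$, with $L_0^*$ depending only on $\gamma$, $\beta$, $\rho$, $K_0$ (and $q$, which we fixed as a function of $\gamma$). Applying Lemma \ref{l:sub-exp} on $[0,T_*]$ with this $L_0^*$ produces $\sigma = C L_0^* \rho(1+\rho)$, depending only on $\gamma$, $\beta$, $\rho$, $K_0$, and gives
\[
f(t,x,v) \leq K_0 e^{-(\rho-\sigma t)\vv^\beta}, \qquad 0 \leq t \leq \min\left\{T_*, \frac{\rho}{2\sigma}\right\}.
\]
Setting $T_f := \min\{T_*, \rho/(2\sigma)\}$ (which depends only on $\gamma$, $\beta$, $\rho$, $K_0$) completes the proof. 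The main obstacle to watch is the bookkeeping in step two — verifying that the conclusion of Lemma \ref{l:sub-exp}, whose $\sigma$ depends on the very norm $L(t)$ we are trying to bound, can genuinely be massaged into the hypothesis $L(t) \leq A e^{BtL(t)}$ of Lemma \ref{l:annoying} with constants $A,B$ independent of $L$; this requires carefully tracking how the $\vv^q$-weighted supremum of $K_0 e^{-(\rho-\sigma t)\vv^\beta}$ depends on $\sigma t$, using $\beta \leq 1$ so that the sub-exponential always beats the polynomial. The rest is routine.
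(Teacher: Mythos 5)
Your overall architecture (bound $L(t)=\|f\|_{L^\infty_q([0,t]\times\R^6)}$ self-consistently, close with Lemma \ref{l:annoying}, then apply Lemma \ref{l:sub-exp} one final time) matches the paper, but the crucial middle step has a genuine gap. You claim that a single application of Lemma \ref{l:sub-exp} with $\sigma(t)=CL(t)\rho(1+\rho)$ yields $L(t)\leq A e^{BtL(t)}$ for \emph{all} $t\in[0,T]$, because ``the exponential factor $e^{\sigma(t)t\vv^\beta}$ absorbs exactly the growth.'' It does not. First, the conclusion of Lemma \ref{l:sub-exp} is only valid for $t\leq \rho/(2\sigma(t))$, i.e.\ for $2C(1+\rho)\,tL(t)\leq 1$; beyond that time a single application gives no bound whatsoever, so there is nothing to evaluate. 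Second, even inside the admissible range the weighted supremum $\sup_v \vv^q e^{-(\rho-\sigma t)\vv^\beta}$ behaves like $(\rho-\sigma t)^{-q/\beta}$: it is simply bounded by a constant when $\sigma t\leq\rho/2$ (so you only recover your conditional bound $L(t)\leq A$ on that region), and it degenerates polynomially as $\sigma t\to\rho$ rather than being controlled by $e^{B\sigma t}$ — so the inequality $L\leq Ae^{BtL}$ on all of $[0,T]$, which is exactly what Lemma \ref{l:annoying} needs, is never established. This is not mere bookkeeping: producing that functional inequality is the main content of the paper's proof of Lemma \ref{l:sub-exp2}. There, one fixes $t_0$, subdivides $[0,t_0]$ into $n\approx t_0/T_1$ intervals of length $T_1=\frac{1}{2CL(t_0)(1+\rho)}$, and reapplies Lemma \ref{l:sub-exp} on each subinterval; each step costs a fixed multiplicative factor $C_{\beta,q,\rho}$ in the $L^\infty_q$ norm, and compounding over $n\sim t_0L(t_0)$ steps is precisely what generates $L(t_0)\leq K_0e^{Cn}=K_0e^{C'\,t_0L(t_0)}$.

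Two ways to repair your argument: either carry out this time-stepping iteration (your ``apply Lemma \ref{l:sub-exp} iteratively'' is announced but never actually iterated over subintervals), or stick with the conditional statement you did prove — if $2C(1+\rho)\,tL(t)\leq 1$ then $L(t)\leq A$ — and close it by a continuity/bootstrap argument: since $L$ is continuous, increasing, and $L(0)\leq A$, the set where $L\leq 2A$ is open and closed in $[0,\min\{T,T_f\}]$ with $T_f=\frac{1}{4AC(1+\rho)}$, giving $L\leq A$ there. That second route would actually bypass Lemma \ref{l:annoying} entirely and is a legitimate alternative to the paper's proof, but as written you abandoned it in favor of the unjustified packaging, so the proof is incomplete at its key step. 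The final application of Lemma \ref{l:sub-exp} with the cleaned-up bound is fine and agrees with the paper.
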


\begin{proof}
Let $q> \gamma + 5$ be an arbitrary fixed constant. Define the function
\[
L(t) := \|\vv^q f\|_{L^\infty([0,t]\times\R^6)},
\]
and with $t_0\in (0,T]$ fixed, define
\[
T_1 = \frac 1 {2CL(t_0) (1+\rho)},
\]
where $C$ is the constant from Lemma \ref{l:sub-exp}. 
Next, let $n$ be the smallest natural number such that $t\leq n T_1$. Since the function $L(t)$ is increasing, we have $f \leq L(t_0) \vv^{-q}$ when $t\in [0,t_0]$. Applying the upper bound of Lemma \ref{l:sub-exp} with the value $\sigma = C L(t_0) \rho (1+\rho)$ then gives
\[
\vv^q f(t,x,v) \leq K_0 \vv^q e^{-(\rho/2)\vv^\beta}, \quad 0 \leq t \leq \frac {t_0} n,
\]
since $t_0/n \leq T_1$ and $\rho - \sigma T_1 \geq \rho/2$. 
It is straightforward to show, using calculus, that for $s, \rho \geq 0$,
\[
\langle s\rangle^q e^{-\rho \langle s\rangle^\beta} \leq C_{\beta,q} \rho^{-q/\beta},
\]
for some constant $C_{\beta,q}>0$. (Recall $\beta>0$ in all cases.) This inequality implies
\[
\vv^q f(t,x,v) \leq C_{\beta,q} K_0 (\rho/2)^{-q/\beta}, \quad 0 \leq t \leq \frac {t_0} n.
\]
Repeating this argument $n$ times with time shifted appropriately, we obtain
\begin{equation}\label{e:interm}
\begin{split}
\vv^q f(t,x,v) &\leq C_{\beta,q}^n K_0 (\rho/2)^{-(q/\beta) n}\\
&\leq K_0 e^{n C_{\beta,q,\rho}}, \quad 0 \leq t \leq t_0,
\end{split}
\end{equation}
where $C_{\beta,q, \rho} = \log(C_{\beta,q} (\rho/2)^{-q/\beta})$. Taking the supremum over $[0,t_0]\times\R^6$ and using $t_0 = nT_1$, this gives
\[
L(t_0) \leq K_0 e^{C_{\beta,q,\rho} t_0/T_1} = K_0 \exp\left( C_{\beta,q,\rho} 2C (1+\rho) L(t_0) t_0\right).
\]
Since $t_0 \in (0,T]$ was arbitrary, we apply Lemma \ref{l:annoying} to obtain 
\[
L(t) \leq e K_0, \quad 0 \leq t \leq \min\{T, T_f\},\quad  T_f := \frac 1 {e K_0C_{\beta,q,\rho} 2C (1+\rho)}. 
\]
Applying Lemma \ref{l:sub-exp} again with this bound on $L(t)$, we conclude the statement of the lemma.
\end{proof}

\section{Lower bounds}\label{s:lower}

% In this section, let $f$ be a smooth solution of \eqref{e:main} on $[0,T]\times \R^6$. We assume the initial data has a core of mass:
 Recall that in the statement of Theorem \ref{t:existence}, we assume $f$ is strictly positive in a small region:
\[ f_0(x,v) \geq \delta, \quad x\in B_r(x_0), v\in B_r(v_0),\]
for some $r, \delta >0$ and $x_0,v_0\in \R^3$. Our goal is to show that this positive lower bound is propagated forward in time. The statement is similar to \cite[Theorem 1.3]{Henderson-Snelson-Tarfulea2019}, but we need to use a different proof. 

The following lemma is used to spread lower bounds forward in time. The proof is inspired by \cite[Lemma 3.1]{HST2020lowerbounds}, which applied to the Boltzmann equation. %, but we need to adapt the argument for the Landau equation. 

\begin{lemma}\label{l:pointwise}
Let $f\geq 0$ solve \eqref{e:main} on $[0,T]\times\R^6$, and assume 
\[
\sup_{t,x}\int_{\R^3} (1+|v|^{\gamma+2}) f(t,x,v) \dd v \leq K_0.
\]

 If $f(0,x,v) \geq \delta 1_{\left|x-x_0\right|<r,\left|v-v_0\right|<r/\sigma}$ for some $(x_0, v_0) \in \R^6$ and $\delta, r, \sigma > 0$, then the lower bound
 \[
 f(t,x,v)\ge \frac{\delta}{2}
 \]
holds whenever $0 \leq t\leq \min\{T, \sigma\}$ and, for a constant $C$ depending only on $\gamma$,
\[
\frac{\left|v - v_0\right|^2}{r^2/\sigma^2} + \frac{\left|x - x_0 -tv\right|^2}{r^2} < \frac{1}{4},\quad \text{ and } t < \frac{C K_0^{-1} (r/\sigma)^2}{ \langle \left|v_0 \right| + r/\sigma\rangle^{\gamma +2}}.
\]
\end{lemma}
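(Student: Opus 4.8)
The strategy is a localized barrier (comparison) argument for the Landau equation written in non-divergence form $\partial_t f + v\cdot\nabla_x f = \mathrm{tr}(\bar a^f D_v^2 f) + \bar b^f\cdot\nabla_v f + \bar c^f f$; actually, since we want a lower bound, it is cleaner to keep the form $\partial_t f + v\cdot\nabla_x f = \mathrm{tr}(\bar a^f D_v^2 f) + \bar c^f f$ and note $\bar c^f\geq 0$. The idea is to construct a subsolution $\psi(t,x,v)$ that sits below $f$ at time $0$, is supported in the moving ellipsoidal region described in the statement, and whose evolution under the Landau operator can be controlled using only the upper bounds of Lemma \ref{l:abc} (which hold since $\int(1+|v|^{\gamma+2})f\,\dd v\leq K_0$). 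A natural ansatz, adapted from \cite[Lemma 3.1]{HST2020lowerbounds}, is something like
\[
\psi(t,x,v) = \frac{\delta}{2}\,\chi\!\left(\frac{|v-v_0|^2}{r^2/\sigma^2} + \frac{|x-x_0-tv|^2}{r^2}\right),
\]
for a fixed smooth cutoff $\chi$ with $\chi\equiv 1$ on $[0,1/4]$ and $\chi\equiv 0$ on $[1/2,\infty)$; one may also need to append a time-decaying or time-growing prefactor, but with $\bar c^f\geq 0$ and the time horizon kept short, a pure cutoff should suffice. Write $\eta := \frac{|v-v_0|^2}{r^2/\sigma^2} + \frac{|x-x_0-tv|^2}{r^2}$; the key algebraic observation is that the transport part $(\partial_t + v\cdot\nabla_x)$ annihilates $x-x_0-tv$ exactly (this is why the Galilean-shifted variable appears), so $(\partial_t + v\cdot\nabla_x)\eta$ only picks up the $t$-derivative of the $v$-independent-looking term through $v$, giving a term of size $O(|x-x_0-tv|\,|v-v_0|/r^2)$, which on the support of $\psi$ is $O(\sigma/r^2 \cdot r^2) = O(\sigma)$ — wait, more carefully it is $O(t|v-v_0|/r \cdot \ldots)$; in any case it is controlled by the constraint $t\leq\sigma$ together with the size of the support.

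The main computation is then to check that on the support of $\psi$ (where $\psi>0$ but $f$ has not yet been shown to exceed it), one has
\[
\partial_t\psi + v\cdot\nabla_x\psi - \mathrm{tr}(\bar a^f D_v^2\psi) - \bar c^f\psi \;\leq\; 0.
\]
The bad terms are: (i) the transport/time-derivative term, which is $\leq C\sigma^{-1}|\chi'|$-type contributions that scale like $t/(r^2/\sigma)$ after accounting for the $\partial_t(x-x_0-tv) = -v$ factor — this is where the hypothesis $t<\sigma$ is used; and (ii) the second-order term $\mathrm{tr}(\bar a^f D_v^2\psi)$. For (ii), $D_v^2\psi$ has size $O(\delta\sigma^2/r^2)$ (two $v$-derivatives each contributing $\sigma/r$ from the $|v-v_0|^2/(r/\sigma)^2$ term, plus cross terms with the $x$-variable that are smaller), and by Lemma \ref{l:abc} the entries of $\bar a^f$ are bounded by $C K_0 \langle v\rangle^{\gamma+2}$, which on the support is $\leq C K_0\langle |v_0|+r/\sigma\rangle^{\gamma+2}$. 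So the second-order term is bounded by $C K_0\langle |v_0|+r/\sigma\rangle^{\gamma+2}\delta\sigma^2/r^2$. Comparing with... hmm, but $\psi$ itself is only $\delta/2$, not small, so "absorbing" requires instead that the whole combination be negative where $\chi'<0$ (near the boundary of the support) — the standard trick is that $\partial_t\psi = \frac{\delta}{2}\chi'(\eta)\partial_t\eta$ with $\chi'\leq 0$, and on the annular region $\{1/4<\eta<1/2\}$ one arranges $\partial_t\eta>0$ strictly (the $v$-ellipsoid part of $\eta$ grows because... actually $\eta$ need not be monotone; one typically chooses the radii and a slightly different quadratic so that $\partial_t\eta\geq c>0$ on the annulus). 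This forces $\partial_t\psi\leq -c\frac{\delta}{2}|\chi'(\eta)|$, a strictly negative reservoir, against which the second-order term (which is bounded by $C K_0\langle |v_0|+r/\sigma\rangle^{\gamma+2}\delta\sigma^2/r^2 \cdot (|\chi''|+|\chi'|)$) is absorbed precisely when $t\leq \frac{c K_0^{-1}(r/\sigma)^2}{\langle |v_0|+r/\sigma\rangle^{\gamma+2}}$ — this is the second quantitative hypothesis in the statement. Once the differential inequality holds, a first-touching-point argument (as in the proof of Lemma \ref{l:sub-exp}: if $f<\psi$ fails, let $z_0$ be a first crossing point with $t_0>0$, derive $\partial_t(f-\psi)\leq0$, $\nabla_x(f-\psi)=0$, $D_v^2(f-\psi)\geq0$ at $z_0$, and get a contradiction with the differential inequality, using $\bar c^f\geq 0$ and $\bar a^f\geq 0$) gives $f\geq\psi$, hence $f\geq\delta/2$ on $\{\eta<1/4\}$ for the stated time range.

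The main obstacle is getting the bookkeeping of the support geometry right so that the transport term genuinely produces a negative reservoir of the right size and the hypotheses $t<\sigma$ and $t<CK_0^{-1}(r/\sigma)^2\langle|v_0|+r/\sigma\rangle^{-(\gamma+2)}$ emerge with the correct powers — in particular, tracking how the anisotropy of $\bar a^f$ (worse growth $\langle v\rangle^{\gamma+2}$ in the $v^\perp$ directions vs. $\langle v\rangle^\gamma$ parallel to $v$) interacts with $D_v^2\psi$, and making sure no term involving $\bar b^f$ sneaks back in (handled by using the non-divergence form without $\bar b$). A secondary technical point is justifying the first-crossing-point argument: $\psi$ has compact support in $(x,v)$ (modulo periodicity in $x$) and $f$ is a classical solution, so on $[0,\min\{T,\sigma\}]\times\T^3\times\overline{B}$ (a suitable compact set containing the support) the infimum of $f-\psi$ is attained, exactly as in the proof of Lemma \ref{l:sub-exp}; one should check that the touching point cannot occur on $\partial(\mathrm{supp}\,\psi)$ where $\psi=0$ (there $f\geq0=\psi$ with room to spare, or one argues $\psi$ and its derivatives vanish there so the inequality is trivially consistent). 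I would also add a small $\eps$ and pass to the limit, as in Lemma \ref{l:sub-exp}, to deal with the non-strict initial inequality at the boundary of the initial support.
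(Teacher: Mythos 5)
Your overall framework (a barrier supported in the moving ellipsoid $\eta=\frac{|v-v_0|^2}{r^2/\sigma^2}+\frac{|x-x_0-tv|^2}{r^2}<1$, upper bounds on $\bar a^f$ from Lemma \ref{l:abc}, discarding $\bar c^f\geq 0$, and a first-crossing-point argument) is the same as the paper's, but the barrier you propose does not satisfy the required subsolution inequality, and your attempted fix does not close. With the pure cutoff $\psi=\frac{\delta}{2}\chi(\eta)$ one has $(\partial_t+v\cdot\nabla_x)\eta\equiv 0$ (the $\partial_t$ and $v\cdot\nabla_x$ contributions from $x-x_0-tv$ cancel exactly), hence $(\partial_t+v\cdot\nabla_x)\psi\equiv 0$; at a crossing point you would then need $0<\mathrm{tr}(\bar a^f D_v^2\psi)+\bar c^f\psi$, but on the annulus $\mathrm{tr}(\bar a^f D_v^2\psi)$ can be as negative as $-CK_0\langle|v_0|+r/\sigma\rangle^{\gamma+2}\delta\sigma^2/r^2$ (from $\chi'(\eta)\,\mathrm{tr}(\bar a^fD_v^2\eta)$ with $\chi'<0$, and from $\chi''$ where it is negative), and there is no quantitative lower bound on $\bar c^f\psi$ to compensate. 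You noticed this and suggested arranging $\partial_t\eta\geq c>0$ on the annulus, but that is inconsistent with the $\eta$ you wrote down (its kinetic derivative vanishes identically), and you never specify the modified quadratic or verify that the resulting reservoir, which is weighted by $|\chi'|$, dominates the $\chi''$ terms near the edges of the annulus where $\chi'$ is small.

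The missing ingredient in the paper's proof is to build the negative reservoir into the amplitude rather than into $\eta$: the barrier is $\ell(t,x,v)=-c_1t+c_2\bigl(1-\eta\bigr)$ with $c_2=\frac{3\delta}{4}$, so that $(\partial_t+v\cdot\nabla_x)\ell\equiv-c_1$, a uniform strictly negative constant. Choosing $c_1\gtrsim K_0\langle|v_0|+r/\sigma\rangle^{\gamma+2}\sigma^2r^{-2}$ (times $c_2$, from $|D_v^2\ell|\leq Cc_2\sigma^2r^{-2}$ on $\{\ell>0\}$) makes $\ell$ a strict subsolution at every point where $\ell>0$ and for all times, so the comparison argument closes with no constraint on $t$ at that stage. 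This also corrects your reading of where the hypothesis $t<CK_0^{-1}(r/\sigma)^2\langle|v_0|+r/\sigma\rangle^{-(\gamma+2)}$ comes from: it is not used to absorb the second-order term into the differential inequality (which, as you describe it, would not work, since the size of $\mathrm{tr}(\bar a^fD_v^2\psi)$ does not shrink with $t$); it is used only at the end, to guarantee that $c_1t$ is small enough that $\ell\geq\delta/2$ still holds on the inner region $\{\eta<1/4\}$. Your crossing-point mechanics and the role of $t\leq\sigma$ (to bound the $t^2$ contribution in $D_v^2\ell$ coming from the $x$-part of $\eta$) are fine; it is the structure of the barrier and the provenance of the time restriction that need to be repaired.
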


\begin{proof}
Consider the function

\begin{equation}\label{e:underline}
    \ell(t, x, v):= -c_{1}t + c_{2} \left(1 - \frac{\left|v - v_0\right|^2}{r^2/\sigma^2} - \frac{\left|x - x_0 - tv\right|^2}{r^2}\right)
\end{equation} 
with $c_1, c_2 > 0$ chosen later.  %Here $\Psi$ is a smooth approximation of the "positive part" function; that is, a smooth, increasing function such that 
%
%\[
 %   \Psi(z) = \begin {cases} 
   %  0,  &\text   {if }   z\leq 0, \\
    % z,  &\text   {if }   z\geq \frac{1}{2} 
    %\end {cases}
 %\]
%For brevity, let
%\[
%h_r = 1 - \frac{\left|v - v_0\right|^2}{r^2} - \frac{\left|x - x_0 - tv\right|^2}{r^2}.
%\]

 Assume that $(t,x,v)$ is such that $\ell > 0$. We clearly have $\tr(\bar a^f D_v^2\ell) = \tr (\bar a^f D_v^2(\ell + c_1t))$, so that \eqref{e:underline} implies
\begin{equation}\label{e:dt}
    \begin{split}
   \partial_t \ell &= -c_1 + c_2 \cdot \frac{2}{r^2}(x - x_0 - tv)\cdot v,\\
  v\cdot \nabla_x \ell& = c_2 \frac{2}{r^2} (x - x_0 -tv) \cdot v,\\
      \partial_t \ell + v \cdot \partial_x \ell &= -c_1.%< Q (f, \ell)
      \end{split}
\end{equation}     
Next, by direct calculation,
\begin{equation}\label{e:d2bound}
\begin{split}
\left|\partial_{ij}\ell(v)\right| &= |4C_2 r^{-4} (\sigma^2(v - v_0) - t (x - x_0 - tv))_i ( \sigma^2(v - v_0) - t(x - x_0 -tv))_j\\
&\quad  + 2 C_2 r^{-2} \delta_{ij}(\sigma^2 + t^2) |\\
& \leq Cc_2 ( (t^2 + \sigma^2)r^{-2} + r^{-2}\sigma^2)\\ 
&\leq Cc_2 \sigma^2 r^{-2}.
\end{split}
\end{equation}
We have used 
$t\leq T$, and that $\ell \leq 0$ if $\frac{\left| v - v_0\right|^2}{r^2/\sigma^2} + \frac{\left|x -x_0 - tv\right|^2}{r^2} > 1$. Using \eqref{e:d2bound}, Lemma \ref{l:abc}, and $\bar c^f \ell \geq 0$,  we have, at points where $\ell$ is positive,
\begin{equation}
    \begin{split}
        Q(f, \ell) &= \tr(\bar a^f D^2 _v \ell) + \bar c^f\ell \\
        &\ge - \left |\tr(\bar a^f D^2 _v \ell) \right |\\
&\ge -C K_0\langle v_0 \rangle ^{\gamma + 2} \sigma^2 r^{-2}.
    \end{split}
\end{equation}
Combining this with \eqref{e:dt} gives
\begin{equation}
    \partial_t \ell + v \cdot \nabla_x \ell = -c_1 < -C K_0 \langle v\rangle^{\gamma +2} \sigma^2 r^{-2} \leq Q(f,\ell), 
\end{equation}
if we make the choice
%\[
  %      Q(f, \ell) \ge -C \langle v \rangle ^{\gamma + 2} + C_2 r^{-2}.
%\]
 \begin{equation}\label{e:choice}
     c_1 = 2CK_0 \langle\left|v_0\right| + r/\sigma\rangle^{\gamma +2} \sigma^2 r^{-2}.
     \end{equation}
 Thus,
 \begin{equation}\label{e:barrier}
 \begin{split}\partial_t \ell + v \cdot \nabla_x \ell = -c_1 < Q(f, \ell) \text{ for all } v \in B_{r} (v_0)
 \end{split}
 \end{equation}

Now, we claim  $f >\ell$ for all $(t,x,v)$ such that $\ell(t,x,v)>0$.   By choosing $c_2 = \frac{3\delta}{4}$, this claim is true when $t=0$.  
 If the claims fails, then there is a first crossing point $(t_0,x_0,v_0)$ with $\ell(t_0,x_0,v_0)>0$, such that $f(t_0,x_0,v_0)=\ell(t_0,x_0,v_0)$ and $f(t,x,v)>\ell(t,x,v)$ whenever $\ell(t,x,v)>0$ and $t<t_0$.
 
 The strict positivity of $t_0$ follows from the compact support of $ \ell(t,\cdot,\cdot)$ for each $t$.  We also have $f(t_0,x,v)\geq \ell(t_0,x,v)$ for all $(x,v)\in \R^6$.
 
 At $(t_0,x_0,v_0)$, we have $\partial_t f \leq \partial_t \ell$ and $\nabla_x f = \nabla_x \ell$, so that \eqref{e:barrier} implies
 \begin{equation}\label{e:less0}
 0\geq (\partial_t + v_0 \cdot \nabla_x)(f-\ell)(t_0,x_0,v_0)>Q(f,f-\ell).
 \end{equation}
 Next, since $f-\ell$ has a local minimum in $v$ at the crossing point, we have
\[ Q(f,f-\ell) \geq \tr(\bar a^f D_v^2(f-\ell))(t_0,x_0,v_0) \geq 0,\]
contradicting \eqref{e:less0}.

 This contradiction implies $f\geq \ell$ whenever $\ell(t,x,v)>0$.  The conclusion then follows by choosing $C$ according to the constant in \eqref{e:choice} and using the definition of $\ell$.
\end{proof}

The purpose of the next lemma is to spread lower bounds to large velocities. This proof uses a barrier argument involving a function like $e^{-c|v|^2/t}$, which is reminiscent of arguments seen in \cite{Desvillettes-Villani2000} and later \cite{Snelson2020}. However, the presence of vacuum regions results in worse coercivity properties for the matrix $\bar a^f$, with the end result that our lower bounds do not extend to arbitrarily large velocities.

\begin{lemma}\label{l:spread-v}
Let $f$ be a solution of the Landau equation \eqref{e:main} on $[0,T]\times\R^6$, such that
\[
\sup_{t,x}\int_{\R^3} f(t,x,v)(1+|v|^{\gamma+2}) \dd v \leq K_0, 
\] 
and for some $\delta, r>0$, $\tau\in (0,1]$, and $x_0,v_0\in \R^3$, assume that $f$ satisfies the lower bound
\[
f(t,x,v) \geq \delta, \quad t\in [0,\tau], x\in B_r(x_m), v\in B_r(v_m).
\]
Then, for any $R>1$, $f$ also satisfies the lower bound
\[
f(t,x,v) \geq  \frac \delta 4  e^{-\kappa t^{-1} |v-v_m|^2}, \quad t\in [0,\tau'], x\in B_{r/4}(x_m + tv_m), v\in B_R(v_m),
\]
where $\kappa>0$ depends on $\delta$, $K_0$, and $r$, and $\tau'  = \min\{\tau, C r/R\}$ for a constant $C>0$ depending on $K_0$.
\end{lemma}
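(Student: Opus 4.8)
The plan is to build a barrier of the form $\ell(t,x,v) = g(t)\,e^{-\kappa t^{-1}|v-v_m|^2}\,\chi(x)$, where $g$ is a slowly-varying amplitude, $\kappa>0$ is a large constant to be fixed, and $\chi$ is a cutoff in the $x$-variable (transported along the flow, i.e. a function of $x - x_m - tv$) that localizes the argument near the point $x_m + tv_m$ and vanishes on the boundary of $B_{r/4}$. The key point, as in the proof of Lemma \ref{l:pointwise}, is that $\ell$ should be a subsolution of the linear Landau equation $\partial_t \ell + v\cdot\nabla_x \ell \leq \textup{tr}(\bar a^f D_v^2 \ell) + \bar c^f \ell$ on the region where $\ell > 0$, and should start below $f$ at $t=0$ (here using that $f \geq \delta$ on $B_r(v_m)$ absorbs the Gaussian, which is $\leq 1$, on the relevant initial slice, together with the fact that $e^{-\kappa t^{-1}|v-v_m|^2} \to 0$ as $t\to 0$ for $v \neq v_m$ so the barrier is harmless initially). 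Then a first-crossing-point argument identical in structure to the one in Lemma \ref{l:pointwise} — at a first touching point $\partial_t(f-\ell)\leq 0$, $\nabla_x(f-\ell)=0$, $D_v^2(f-\ell)\geq 0$, and $Q(f,f-\ell) \geq \textup{tr}(\bar a^f D_v^2(f-\ell)) \geq 0$ since $\bar c^f(f-\ell) = 0$ there — yields the contradiction and hence $f \geq \ell$.

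The heart of the computation is checking the subsolution inequality for the Gaussian. Writing $\psi = e^{-\kappa t^{-1}|v-v_m|^2}$, one computes $\partial_t \psi = \kappa t^{-2}|v-v_m|^2 \psi$ (the bad, positive term), $\nabla_v \psi = -2\kappa t^{-1}(v-v_m)\psi$, and $D_v^2\psi = [4\kappa^2 t^{-2}(v-v_m)\otimes(v-v_m) - 2\kappa t^{-1} I]\psi$. Feeding this into $\textup{tr}(\bar a^f D_v^2\psi)$, the crucial gain comes from the lower ellipticity estimate of Lemma \ref{l:coercivity}: since $f \geq \delta$ on a ball, $\bar a^f_{ij}\xi_i\xi_j \gtrsim (1+|v|)^\gamma |\xi|^2$ in every direction, so in particular $\bar a^f_{ij}(v-v_m)_i(v-v_m)_j \gtrsim |v-v_m|^2$ (when $|v-v_m| \gtrsim 1$; the small-$|v-v_m|$ case is handled separately by the amplitude $g$), which produces a term $+4\kappa^2 t^{-2}\,\bar a^f_{ij}(v-v_m)_i(v-v_m)_j\,\psi \gtrsim \kappa^2 t^{-2}|v-v_m|^2\psi$. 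Wait — that term has the wrong sign for a subsolution; so actually I must be more careful and arrange the barrier so that $f$ stays \emph{above} $\ell$. Let me reconsider: the correct mechanism is that we need $\textup{tr}(\bar a^f D_v^2\psi) \geq \partial_t \psi + (\text{lower order})$ in absolute terms is not what we want; rather the barrier argument requires $\partial_t \ell + v\cdot\nabla_x\ell - \textup{tr}(\bar a^f D_v^2\ell) - \bar c^f\ell \leq 0$. The $\partial_t$ term contributes $+\kappa t^{-2}|v-v_m|^2\psi$; the $-\textup{tr}(\bar a^f D_v^2\psi)$ term contributes $-4\kappa^2 t^{-2}\bar a^f(v-v_m)\cdot(v-v_m)\psi + 2\kappa t^{-1}\textup{tr}(\bar a^f)\psi$; using the \emph{lower} bound on the quadratic form, the first of these is $\leq -c\kappa^2 t^{-2}|v-v_m|^2\psi$, which for $\kappa$ large dominates the bad $+\kappa t^{-2}|v-v_m|^2\psi$ term. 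The remaining term $2\kappa t^{-1}\textup{tr}(\bar a^f)\psi$ uses the \emph{upper} bound of Lemma \ref{l:abc}: $\textup{tr}(\bar a^f) \lesssim K_0(1+|v|)^{\gamma+2} \lesssim K_0\langle |v_m|+R\rangle^{\gamma+2}$, so this is $\lesssim \kappa t^{-1} K_0 R^{\gamma+2}\psi$, and it is absorbed by choosing the time interval $\tau' \lesssim r/R$ small and/or by the amplitude. The $x$-cutoff contributes error terms localized near $\partial B_{r/4}$; there, using that $\psi \leq e^{-\kappa t^{-1}(\text{something})}$ one checks these are dominated — this is exactly the technical bookkeeping that the factor $r/4$ (rather than $r$) and the transported center $x_m+tv_m$ are designed to make work, analogously to the algebra in \eqref{e:d2bound}.

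The main obstacle, and the reason the lemma only gives $v \in B_R(v_m)$ rather than all of $\R^3$, is precisely the competition between the good term $-c\kappa^2 t^{-2}|v-v_m|^2\psi$ coming from coercivity and the bad term $2\kappa t^{-1}\textup{tr}(\bar a^f)\psi \lesssim \kappa t^{-1}K_0\langle|v|\rangle^{\gamma+2}\psi$ coming from the trace: when $\gamma \geq 0$ this bad term grows in $|v|$ and can only be controlled by keeping $\langle|v|\rangle$ bounded by $R$ and taking $\tau' = \min\{\tau, Cr/R\}$, so that $t^{-1}\langle|v|\rangle^{\gamma+2}$ stays comparable to the scales we can afford. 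I would organize the final choice of constants by first fixing $\kappa$ in terms of $\delta, K_0, r$ (balancing coercivity against the $\partial_t$ term and the zeroth-order $\bar b^f\cdot\nabla_v\psi$ contribution, which is $\lesssim K_0\langle|v|\rangle^{\gamma+1}\kappa t^{-1}|v-v_m|\psi$ and also absorbed by the good term for $\kappa$ large), then choosing the amplitude $g(t)$ and the constant $C$ in $\tau' = \min\{\tau, Cr/R\}$ to handle the remaining trace and cutoff errors, and finally verifying $\ell(0,\cdot,\cdot) \leq f(0,\cdot,\cdot)$ and $\ell \leq \delta/4$ on the region where we claim the bound so that the conclusion reads $f \geq (\delta/4)e^{-\kappa t^{-1}|v-v_m|^2}$ as stated.
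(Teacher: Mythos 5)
Your overall strategy is indeed the paper's: a barrier of the form (amplitude)$\times$($x$-cutoff)$\times e^{-\kappa t^{-1}|v-v_m|^2}$, a first-crossing argument, with the coercivity of Lemma \ref{l:coercivity} supplying the good term $4\kappa^2 t^{-2}\,\bar a^f_{ij}(v-v_m)_i(v-v_m)_j\,\psi$ and Lemma \ref{l:abc} controlling the bad ones. However, there is a genuine gap at the spatial cutoff, and your diagnosis of why the conclusion is restricted to $B_R(v_m)$ with $\tau'\sim r/R$ is not the correct one. The competition you single out --- coercivity versus $2\kappa t^{-1}\tr(\bar a^f)\psi$ --- is resolved uniformly in $v$: on the comparison region $\{|v-v_m|\geq r/2\}$ the ratio of the good term to the trace term is $\gtrsim \kappa t^{-1}|v-v_m|^2/\langle v-v_m\rangle^{2}\gtrsim \kappa$, so choosing $\kappa$ large (depending on $\delta$, $K_0$, $r$, using $\gamma\geq 0$ and $t\leq 1$) handles all velocities at once; this is exactly how the paper absorbs that term, and it does not by itself force $|v|\leq R$.

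What does force the restriction is the spatial localization, which you dispose of with ``one checks these are dominated.'' The coercivity of $\bar a^f$ is only available where $f(t,x,\cdot)\geq\delta$ on $B_r(v_m)$, i.e.\ for $x$ near $x_m$ (drifting with $v_m$), so the barrier must carry an $x$-cutoff --- but then the kinetic transport makes the cutoff's error terms grow in $|v|$. With a static cutoff $\zeta(x)$ the term $v\cdot\nabla_x\zeta$ is of size $|v|/r$; with your transported cutoff $\chi(x-x_m-tv)$ the transport term vanishes, but $\nabla_v$ and $D_v^2$ now hit $\chi$, producing cross terms of size roughly $\kappa |v-v_m|\,\langle v\rangle^{\gamma+2}/r$ and $t^2\langle v\rangle^{\gamma+2}/r^2$ after pairing with $\bar a^f$; moreover a crossing point at large $|v_0|$ may sit at an $x_0$ outside $B_r(x_m)$, where Lemma \ref{l:coercivity} is unavailable, unless $t|v_0|\lesssim r$. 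None of these is dominated by the coercivity term uniformly in $v$. This is precisely why the paper's barrier contains, besides the Gaussian and the spatial factor $(\zeta(x)-A_1t)$, a velocity cutoff factor $(\xi_R(v)-A_2t)$: the linear-in-$t$ deductions generate constants $A_1\sim R/r$ and $A_2\sim K_0R^\gamma$ that beat the two cutoffs' error terms, the crossing point is confined to $|v_0|\leq 2R$, and the price is exactly the restrictions $v\in B_R(v_m)$ and $\tau'=\min\{\tau,Cr/R\}$ (which keep $\zeta-A_1t$ and $\xi_R-A_2t$ bounded below by $1/2$, whence the factor $\delta/4$). Without this velocity truncation (or an equivalent mechanism) your subsolution inequality fails at large $|v|$ inside the support of the barrier, so the argument as sketched does not close; note also that your amplitude must be taken $\leq\delta$ so that the known bound $f\geq\delta$ beats the barrier on the lateral boundary $\{|v-v_m|=r/2\}$ of the comparison region, as in the paper.
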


\begin{proof}
First, we recenter around the origin by defining
\begin{equation}\label{e:recenter}
\widetilde f(t,x,v) = f(t, x_m + x + tv_m, v_m + v).
\end{equation}
A direct calculation shows that $\widetilde f$ satisfies the Landau equation in $[0,\tau]\times\R^6$. Our assumptions for $f$ imply 
\begin{equation}\label{e:tilde-f}
\widetilde f(t,x,v) \geq \delta,  \quad t \in \left[0,\widetilde\tau\right], x\in B_{r/2}(0), v\in B_r(0),
\end{equation}
where $\widetilde \tau = \min\{\tau, r/(2|v_m|)\}$. For the remainder of the proof, we write $f$ instead of $\widetilde f$.

%Let $\zeta$ be a smooth, nonnegative cutoff function with $\zeta = 1$ in $B_{r/2}(x_m)$, $\zeta = 0$ in $\R^3\setminus B_{r}(x_m)$, and $|\nabla_x \zeta| \leq C r^{-1}$. 

Define $\zeta(x) = 1- |x|^2/(r/2)^2$, and note that $\zeta(x) \leq 1_{B_{r/2}}(x)$.  Let $\xi_R:\R^3\to [0,\infty)$ be a smooth, radially decreasing cutoff with $\xi_R = 1$ in $B_R$ and $\xi_R = 0$ outside $B_{2R}$, with $|\nabla \xi_R|\leq C R^{-1}$ and $|D^2 \xi_R| \leq C R^{-2}$ globally in $\R^3$, for some constant $C$. Next, define
\[
\psi(t,x,v) =   \delta ( \zeta(x) - A_1 t)  (\xi_R(v) - A_2 t) e^{-\kappa  t^{-1}|v|^2} ,
\]
where $\kappa, A_1, A_2>0$ are constants to be chosen later. For some small $\eps>0$, we claim that
\begin{equation}\label{e:lower-crossing}
f(t,x,v) > \psi(t,x,v) - \eps, \quad \text{ in $\Omega: = [0,\widetilde\tau]\times\R^3_x \times \left\{|v|\geq r/2\right\}$,}
\end{equation}
where we extend $\psi$ smoothly by zero on $\{t=0\} \times \R^3_x\times \{|v|\geq r/2\}$. 

First, let us show that $f>\psi - \eps$ on the (parabolic) boundary of $\Omega$. When $t=0$ and $|v| \geq r/2$, we have $\psi(0,x,v) = 0$ and  $f(0,x,v)\geq 0 > \psi(0,x,v) - \eps$. 

When $|v| = r/2$ and $t\in [0,\widetilde \tau]$, since $\zeta(x) \leq 1_{B_{r/2}}(x)$ and $\xi_R(v) = 1_{B_r}(v) = 1$, we have
%we claim $\zeta(x-tv) \leq 1_{B_{2r}}(x)$. Indeed, if $|x|< 2r$, we have $\zeta(x-tv) \leq 1 = 1_{B_{2r}}(x)$, and if $|x|\geq 2r$, since $\widetilde \tau \leq 1$ and $|v| =r$, we have $|x-tv| \geq r$, so $\zeta(x-tv) \leq 0 \leq 1_{B_{2r}}(x)$. 
\[
\psi(t,x,v) \leq  \delta  1_{B_{r/2}}(x) \xi_R(v) e^{-\kappa t^{-1} r^2/4}  <  \delta 1_{B_{r/2}}(x) = \delta 1_{B_{r/2}}(x) 1_{B_{r}}(v) \leq  f(t,x,v) < f(t,x,v) + \eps,
\]
where we used \eqref{e:tilde-f} and the fact that $e^{-\kappa t^{-1} r^2/4} < 1$.

Next, we claim that if \eqref{e:lower-crossing} is false, there is a point $z_0 = (t_0,x_0,v_0)$ where $f$ and $\psi-\eps$ cross for the first time, with $t_0>0$. This follows from the fact that $f \geq 0 > \psi - \eps$ for all $(x,v)$ outside of a compact domain. %$|v|$ outside a ball of radius $M$ depending on $\eps$, and for all $x$ outside a ball of radius $M+2r$ (since $t\leq 1$ and $|x-tv| \geq 2r>r$). 
Naturally, the crossing point satisfies $|x_0| \leq r/2$ and $|v_0| \leq 2R$.

At the crossing point $z_0$, as above we have
\[
\partial_t( f - \psi) \leq 0, \quad \nabla_x(f-\psi) = 0, \quad D_v^2 (f-\psi) \geq 0,
\]
which implies
\begin{equation}\label{e:lower-bound-psi}
\partial_t \psi + v_0\cdot\nabla_x \psi \geq \partial_t f + v_0\cdot \nabla_x f = \tr(\bar a^f D_v^2 f) + \bar c^f f \geq \tr(\bar a^f D_v^2\psi),
\end{equation}
since $\bar c^f f \geq 0$ and $\bar a^f$ is nonnegative definite. To bound the right side of \eqref{e:lower-bound-psi} from below, we first find via direct calculation
\[
\begin{split}
\partial_{v_i v_j}\psi(t,x,v) =  \delta (\zeta(x) - A_1 t)e^{-\kappa t^{-1} |v|^2 }&\left[  (4\kappa t^{-2} v_i v_j - 2\kappa t^{-1} \delta_{ij}) (\xi_R - A_2 t) \right. \\
& \left.- 2 \kappa t^{-1} \left(  v_j\partial_{v_i} \xi_R + v_i \partial_{v_j} \xi_R \right)  + \partial_{v_i v_j} \xi_R \right].
\end{split}
\]
Therefore, at $z_0$ we have
\[
\begin{split}
\tr(\bar a^f D_v^2 \psi) 
&=
\delta (\zeta(x_0) - A_1 t_0)e^{-\kappa t_0^{-1} |v_0|^2 }\left[  (4\kappa t_0^{-2} \bar a^f (v_0)_i (v_0)_j - 2\kappa t_0^{-1} \tr(\bar a^f))( \xi_R(v_0) - A_2 t_0) \right. \\
&\quad 
\left. - 4 \kappa t_0^{-1}  \bar a^f_{ij} (v_0)_j\partial_{v_i} \xi_R  + \bar a^f_{ij}\partial_{v_i v_j} \xi_R \right].%\\
%&= 
%\delta (\zeta(x_0) -A_1 t_0)\left[ 4 \mu(t_0)^2 \bar a^f_{ij} (v_0)_i (v_0)_j - 2\mu(t_0) \tr(\bar a^f)\right] e^{-\mu(t_0) |v_0|^2}\\
%&\geq  \delta  (\zeta(x_0) - A_1 t_0) \mu(t_0) \left[ A_2 \mu(t_0) \vvo^{\gamma} |v_0|^2 - C_2  \vvo^{\gamma+2}\right]e^{-\mu(t_0) |v_0|^2}
\end{split}
\]
Since $\xi_R$ is radially decreasing and $\bar a^f$ is positive-definite, we have $-\bar a^f_{ij} (v_0)_j \partial_{v_i} \xi_R \geq 0$ at $z_0$. Next, using Lemmas \ref{l:coercivity} and \ref{l:abc} and $|D_v^2 \xi_R| \leq C R^{-2}$, we have
\begin{equation*}%\label{e:traf}
\begin{split}
\tr(\bar a^f D_v^2 \psi)  & \geq 
\delta ( \zeta(x_0) - A_1 t_0) e^{-\kappa t_0^{-1} |v_0|^2 } \left[\kappa t_0^{-1} \left(c_1  \vvo^\gamma \kappa t_0^{-1} |v_0|^2 - C_2 \vvo^{\gamma+2} \right) (\xi_R(v_0) - A_2 t_0) \right.\\
& \left.  \quad - C_2 \vvo^{\gamma+2}R^{-2}\right],
\end{split}
\end{equation*}
where $c_1$ is the constant from Lemma \ref{l:coercivity}, which depends on $\delta$ and $r$, and $C_2$ is the constant from Lemma \ref{l:abc}, which depends on $K_0$.  Since $|v_0|\geq r/2$, we have $|v_0|^2 \geq \frac r {r+1} \vvo^2$. Therefore, we can choose $\kappa$ sufficiently large, depending only on $c_1$, $C_2$, and $r$, such that $c_1 \kappa t_0^{-1} |v_0|^2 \geq c_1\kappa  |v_0|^2 \geq 2 C_2 \vvo^2$.  Using this, as well as $|v_0|\leq 2R$, we obtain
\begin{equation}\label{e:traf}
\begin{split}
\tr(\bar a^f D_v^2 \psi)  
%&\geq \delta ( \zeta(x_0) -A_1 t) e^{-\kappa t_0^{-1} |v_0|^2 }\left[ \kappa t_0^{-1} R^{\gamma+2} \left(A_2 \kappa t_0^{-1} - C_2 \right)(\xi_R - A_2 t_0)  - C_2 R^\gamma\right]\\
&\geq \delta (\zeta(x_0) - A_1 t_0) e^{-\kappa t_0^{-1} |v_0|^2}  \left[ c_1\kappa^2 t_0^{-2} R^{\gamma+2}(\xi_R(v_0) - A_2 t_0) - C_2 R^\gamma\right].
\end{split}
\end{equation}

%Recall that $\mu(t) = \kappa (1+t^{-1}) \geq \kappa$. Therefore, for $\kappa>0$ large enough, depending on $r$, $A_2$, and $C_2$, there holds $A_2 \mu(t_0) |v_0|^2 \geq 2 C_2\vvo^2$ for any $t_0 \in [0,\widetilde\tau]$ and any $v_0 \in \{|v_0|\geq r/2\}$. This implies
%\begin{equation}\label{e:traf}
%\tr(\bar a^f D_v^2 \psi)  \geq \frac {A_2} 2 \delta (\zeta(x_0) - A_1 t_0) \mu(t_0)^2 \vvo^{\gamma} |v_0 |^2 e^{-\mu(t_0) |v_0|^2}.
%\end{equation}

For the left side of \eqref{e:lower-bound-psi}, we have
\[
\begin{split}
\partial_t \psi + v_0\cdot \nabla_x \psi 
&= \delta e^{-\kappa t_0^{-1}|v_0|^2}  \left[(- A_1 + v_0 \cdot \nabla_x \zeta (x_0)) (\xi_R(v_0) - A_2 t)  \right.\\
&\quad + \left.( - A_2 + \kappa t_0^{-2}|v_0|^2(\xi_R(v_0) - A_2 t_0) ) (\zeta(x_0) - A_1 t_0)\right] . 
\end{split}
\]
With $A_1 \geq 4R/r$, we have 
\[
-A_1 + v_0\cdot \nabla_x \zeta(x_0) \leq -A_1 - 2\frac{v_0 \cdot x_0}{r^2} \leq -A_1 + \frac{4 R}{r} \leq 0,
\]
so that 
\[
\partial_t \psi + v_0 \cdot \nabla_x \psi  \leq \delta (\zeta(x_0) - A_1t_0) \left[   4\kappa t_0^{-2} R^2 (\xi_R(v_0) - A_2 t_0) - A_2\right] e^{-\kappa t_0^{-1} |v_0|^2}.
\]
Combining this with \eqref{e:lower-bound-psi} and \eqref{e:traf}, we obtain
\[
\begin{split}
(\zeta(x_0) - A_1 t_0)  & \left[ c_1\kappa^2 t_0^{-2} R^{\gamma+2}(\xi_R(v_0) - A_2 t_0)  - C_2 R^\gamma\right]\\ 
&\leq   (\zeta(x_0) - A_1t_0) \left[   4\kappa t_0^{-2} R^2 (\xi_R(v_0) - A_2 t_0) - A_2\right],
\end{split}
\]
or
\[
 \left[c_1\kappa R^{\gamma} - 4\right] \kappa t_0^{-2} R^2(\xi_R(v_0) - A_2 t_0)  + \left[A_2 - C_2 R^\gamma\right]
\leq 0,
\]
which is a contradiction if $\kappa> 4/(c_1 R^\gamma)$ and $A_2> C_2 R^\gamma$. We have established \eqref{e:lower-crossing}, and after sending $\eps\to 0$, we have shown $f(t,x,v) \geq \psi(t,x,v)$ in $\Omega$. In more detail,
\[
f(t,x,v) \geq \delta (\zeta(x) - A_1 t)(\xi_R(v) - A_2 t) e^{-\kappa t^{-1} |v|^2},
\]
with $A_1$, $A_2$, and $\kappa$ as above. If $z = (t,x,v)$ is such that
\[
|x|\leq r/4, \quad t \leq \min\left\{\frac 1 {4A_1}, \frac 1 {2A_2}\right\}, \quad \text{ and $\frac r 2 \leq |v|\leq R$},
\]
then we clearly have $f(t,x,v) \geq \dfrac \delta 4 e^{-\kappa t^{-1}|v|^2}$. Transforming from $\widetilde f$ back to the original solution $f$, we obtain the statement of the lemma.
\end{proof}

Finally, we prove our main lower bounds result:

\begin{theorem}\label{t:lowerbounds}
Let $\gamma \in [0,1)$, and let  $f:[0,T]\times \R^6 \to [0,\infty)$ be a solution to the Landau equation \eqref{e:main}, periodic in $x$ and satisfying 
\[
f(0,x,v) \geq \delta, \quad |x|< r, |v|< r,
\]
for some $\delta, r>0$, and 
\[
\sup_{t,x} \int_{\R^3} (1+|v|^{\gamma+2}) f(t,x,v) \dd v \leq K_0,
\]
for some $K_0>0$. Then there exists $T' \in (0,T]$ and $R(t)>0$ such that for every $t\in (0,T']$ and $x\in \mathbb R^3$, there exists a $v_x \in B_{R(t)}$ such that 
\[
f(t,x,v) \geq \eta(t) >0, \quad |v-v_x|< r',
\]
where the functions $\eta(t)$ and $1/R(t)$ are uniformly positive on any compact subset of $(0,T']$. The time $T'$, radius $r'$, and the functions $\eta(t), R(t)$ depend on $\delta$, $r$, and $K_0$. 
\end{theorem}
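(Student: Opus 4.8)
\emph{Overview.} I would follow the four-step scheme announced in Section~\ref{s:proof}, built entirely out of Lemmas~\ref{l:pointwise} and~\ref{l:spread-v}. \emph{Step (i) (propagation in time).} Apply Lemma~\ref{l:pointwise} with $x_0=v_0=0$ and $\sigma=1$ to the initial bound $f_0\geq \delta\,\mathbf{1}_{|x|<r,\,|v|<r}$; this produces $f\geq \delta/2$ on a space--time tube $[0,\tau_1]\times B_{r_1}(0)\times B_{r_1}(0)$ with $r_1\sim r$ and $\tau_1\sim\min\{T,1,K_0^{-1}r^2\langle r\rangle^{-(\gamma+2)}\}$. \emph{Step (ii) (spreading to large velocities).} Feed this tube into Lemma~\ref{l:spread-v} with $x_m=v_m=0$: for every $R>1$,
\[
f(t,x,v)\;\geq\; \tfrac{\delta}{8}\,e^{-\kappa t^{-1}|v|^2}\;\geq\; \tfrac{\delta}{8}\,e^{-\kappa R^2/t},\qquad t\in(0,\tau_2(R)],\ |x|<r_1/4,\ |v|<R,
\]
where $\tau_2(R)=\min\{\tau_1,C r_1/R\}$ and $\kappa$ depends only on $\delta,r,K_0$. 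Note that this is now a genuine positive lower bound on a whole tube, on the full velocity ball $B_R(0)$.

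\emph{Step (iii) (transport to an arbitrary point).} Here periodicity is essential: set $D:=\operatorname{diam}(\T^3)$, so every target $x^\ast\in\T^3$ has $|x^\ast|\leq D$ (and, shrinking $r$, we may assume $r\leq D$). Fix $t\in(0,T']$ and $x^\ast$. To hit $x^\ast$ at time $t$ we must ride a characteristic of velocity $v_0:=x^\ast/(t-t_\flat)$ out of a point supplied by Step~(ii); so I take $R=R(t):=4D/t$ and a launch time $t_\flat=t_\flat(t):=\min\{\tfrac12\tau_2(R(t)),\,t/4\}$, which guarantees $|v_0|<\tfrac43 D/t<R/2$ and that the Step~(ii) bound is valid at time $t_\flat$. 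Using $f(t_\flat,\cdot)\geq \tfrac{\delta}{8}e^{-\kappa R^2/t_\flat}$ on $B_{r_1/4}(0)\times B_R(0)$ as initial data, I apply Lemma~\ref{l:pointwise} a second time with spatial radius $r_1/4$, velocity ball $B_{\rho_v}(v_0)$, and $\sigma=(r_1/4)/\rho_v$. Its conclusion holds at $(t,x^\ast,v)$ for $v$ near $v_0$ as soon as the three constraints $\rho_v\leq R/2$, $\ t-t_\flat\leq (r_1/4)/\rho_v$, $\ t-t_\flat\lesssim K_0^{-1}\rho_v^2\langle|v_0|\rangle^{-(\gamma+2)}$ hold simultaneously. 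The first is automatic; using $|v_0|\lesssim D/t$, the other two read, up to constants,
\[
\sqrt{K_0}\,D^{(\gamma+2)/2}\,t^{-(\gamma+1)/2}\;\lesssim\;\rho_v\;\lesssim\;\frac{r_1}{t},
\]
and this interval is nonempty precisely when $t^{(1-\gamma)/2}$ lies below a constant depending on $r,K_0,D,\gamma$. \emph{This is exactly where $\gamma<1$ enters}: it lets us choose $T'>0$ small so that every $t\in(0,T']$ admits an admissible $\rho_v=\rho_v(t)$, and then Lemma~\ref{l:pointwise} yields $f(t,x,v)\geq \tfrac{\delta}{16}e^{-\kappa R(t)^2/t_\flat(t)}$ for $|x-x^\ast|<r_1/32$ and $|v-v_0|<\rho_v(t)/4$ (the routine check that one actually lands in a genuine ball in both $x$ and $v$ uses $t-t_\flat\leq \sigma$ to control the cross term in the ellipsoid).

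\emph{Conclusion and remarks.} Covering $\T^3$ by a finite $(r_1/32)$-net and applying Step~(iii) at each net point gives: for every $t\in(0,T']$ and $x\in\T^3$ there is $v_x$ with $|v_x|\leq R(t):=4D/t$ and $f(t,x,v)\geq\eta(t)$ for $|v-v_x|<r'$, where $\eta(t):=\tfrac{\delta}{16}\exp(-\kappa R(t)^2/t_\flat(t))$ and $r':=\rho_v(T')/4$ (admissible because $\rho_v(t)$ is decreasing, so $B_{\rho_v(t)/4}(v_x)\supseteq B_{r'}(v_x)$). These functions are uniformly positive, resp.\ bounded, on compact subsets of $(0,T']$, and $T',r',\eta,R$ depend only on $\delta,r,K_0$ (and $\gamma$), as required. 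An optional Step~(iv), one further application of Lemma~\ref{l:spread-v} near each target, would spread the velocity ball to any radius below a power of $1/t$; it is not needed for the statement but is convenient downstream. \emph{The main obstacle} is Step~(iii): in Lemma~\ref{l:pointwise} the time window carries the velocity radius \emph{squared} in its numerator but degrades like $\langle|v_0|\rangle^{-(\gamma+2)}$ with $|v_0|\sim|x^\ast|/(\text{travel time})$, so enlarging the velocity ball both helps and hurts; reconciling this with $\sigma\leq (r_1/4)/\rho_v$ forces the displayed two-sided bound on $\rho_v$, which is solvable only because $\gamma+1<2$. This is precisely the failure mode that forces the extra well-distributed lower-bound hypothesis when $\gamma=1$ in Theorem~\ref{t:existence}, and that precludes the probabilistic argument of \cite{Henderson-Snelson-Tarfulea2019}.
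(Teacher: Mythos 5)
Your proposal is correct and follows essentially the same route as the paper's proof: Lemma \ref{l:pointwise} to propagate the initial lower bound, Lemma \ref{l:spread-v} to reach a large velocity ball near the origin, and a second application of Lemma \ref{l:pointwise} along a characteristic with velocity $\approx x^\ast/t$, with the solvability of the two-sided constraint on the velocity radius (equivalently the paper's condition \eqref{e:t1}) being exactly where $\gamma<1$ and the boundedness of $\T^3$ enter. The only differences are cosmetic: you take $R(t)=4D/t$ uniformly in the target and leave $\rho_v$ as a free parameter, whereas the paper fixes $R=\max\{1,4|x_1|/t_1\}$ and the explicit $\sigma$ in \eqref{e:sigma}; the underlying computation ($t^{1-\gamma}\lesssim r^2K_0^{-1}D^{-(\gamma+2)}$) is identical.
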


\begin{proof}
Using Lemma \ref{l:pointwise} with $\sigma = 1$ and  $x_0 = v_0 = 0$, we obtain lower bounds of the form
\[
f(t,x,v) \geq \frac \delta 2, \quad t\in [0,\tau], x\in B_{r/3}(0), v\in B_{r/3}(0),
\]
where $\tau\leq C r^2$ for a constant $C>0$ depending on $K_0$. 

Now, let $(t_1,x_1) \in (0,\tau]\times (\R^3\setminus B_r)$ be fixed, and let $v_1 = x_1/t_1$. Lemma \ref{l:spread-v} with $R = \max\{1, 4|v_1|\}$ gives, with $\tau' = \min\{\tau, Cr/R\}$,
\begin{equation}\label{e:int-lower}
f(t,x,v) \geq \frac \delta 8 e^{-\kappa|v|^2/t}, \quad 0< t \leq \tau', |x|\leq r/12, |v|\leq R,
\end{equation}
where $\kappa$ is the constant from Lemma \ref{l:spread-v}, and depends on $\delta$, $r$, and $K_0$. 
%The hypotheses of the theorem are still true if we replace $r$ by a smaller number, so by choosing a smaller $r$ if necessary ($r$ DEPENDS ON $x_1$--IS THIS A PROBLEM?), we may ensure that
%\[ 
%\tau'  = \frac{C r} R \leq \frac{Cr} {4|v_1|} = \frac{C r t_1}{4|x_1|}\leq \frac {t_1} 2.
%\]
Let $t_* = \min\{\tau', t_1/2\}$. 
Letting $v_* = x_1/(t_1 - t_*)$, we then have $|v_*| \leq 2|x_1|/t_1 \leq R/2$. 

For the next step, i.e. spreading lower bounds to the neighborhood of $(t_1,x_1)$, we further restrict the time domain by requiring that $t_1$ satisfy the inequality
\begin{equation}\label{e:t1}
t_1 \leq \min\left\{\left( \frac{ 2C (r/12)^2}{2K_0 R^{\gamma+2}}\right)^{1/3}, \frac {36 C}{K_0 R^\gamma}\right\}.
\end{equation},
where $C$ is the constant from Lemma \ref{l:pointwise}. 
 Since $R = \max\{1,  4|x_1|/t_1\}$, this inequality means that $t_1$ has to be smaller than a constant times $|x_1|^{-(\gamma+2)/(1-\gamma)}$, which we can always guarantee because our $x$ domain $\mathbb T^3_x$ is bounded, resulting in a condition $t_1 \leq \tau''$, where $\tau''$ depends on $K_0$, $r$, and $C$\footnote{In the case of an unbounded spatial domain where $f_0$ satisfies a ``well-distributed'' hypothesis as in \cite{Henderson-Snelson-Tarfulea2019}, $\tau''$ would depend on the maximal distance from any $x$ location to a location where $f_0$ satisfies suitable positive lower bounds.}. This step is the only reason we require $\gamma < 1$. 

We would like to apply Lemma \ref{l:pointwise} to $f(t_* + t, x, v)$, with $x_0 = 0$, $v_0 = v_*$, $r/12$ replacing $r$, and 
\begin{equation}\label{e:sigma}
\sigma = \max\left\{1, \frac r {12} \sqrt{\frac {2C} { t_1 K_0 R^{\gamma+2}} }\right\},
\end{equation}
with $C$ as above. 
We would like the resulting lower bound to hold up to time $t_1/2 \geq t_1 - t_*$. For this, we need to check three conditions:
\begin{itemize}
\item $t_1/2 \leq \sigma$, which holds as a result of \eqref{e:t1}.

\item $r/(12\sigma) \leq R/2$, so that $B_{r/(12\sigma)}(v_*) \subset B_R(0)$. With our choice of $\sigma$, this inequality is equivalent to 
\[
t_1 \leq \frac {36 C}{K_0 R^\gamma},
\]
which is true because of \eqref{e:t1}. 

\item $t_1/2\leq \dfrac{ CK_0^{-1} (r/(12\sigma))^2}{\langle |v_*|+r/(12\sigma)\rangle^{\gamma+2}}$, where $C$ is the constant from Lemma \ref{l:pointwise}. This holds because of \eqref{e:t1} and the previous bullet point. 

\end{itemize}
We are now able to apply Lemma \ref{l:pointwise} with \eqref{e:int-lower} and obtain
\[
f(t_1, x, v) \geq \frac \delta {16} e^{-\kappa R^2/t_*}, \quad \text{ if }\frac{|v-v_*|^2}{r^2/\sigma^2} + \frac{|x- t_1 v|^2}{r^2} < \frac 1 4.
\]
In particular, recalling that $x_1 = t_1 v_1$ and $\sigma \geq 1$, we have that if $|x - x_1|< r/8$ and $|v - v_*|< r/(8\sigma)$, then $|x- t_1 v| < r/4$. Therefore,
\begin{equation}\label{e:t1xv}
f(t_1, x, v) \geq \frac \delta {16} e^{-\kappa R^2/t_*}, \quad |v-v_*|< \frac r {8\sigma}, |x- x_1|< \frac r 8.
\end{equation}

%On the other hand, if the inequality \eqref{e:t1} is false, we choose $t_{**}\in (0, t_1)$,  $v_{**} = x_1/t_{**}$, and $R_{**} = \max\{1, 4|v_{**}|\}$, so that \eqref{e:t1} is satisfied. The choice of $t_{**}$ depends on $r$, $\delta$, $K_0$, and $C$. Repeating the above argument, we obtain
%\[
%f(t, x,v) \geq \frac \delta {16} e^{-\kappa R_{**}^2/t_{**}}, \quad 0 < t \leq t_{**}, |v-v_{**}|< \frac r {8\sigma_{**}}, |x-tv| < \frac r 8,
%\]
%where $\sigma_{**}$ is defined as in \eqref{e:sigma}, with $t_{**}$ and $R_{**}$ playing the role of $t_1$ and $R$. Now, we apply Lemma \ref{l:spread-v} with $v_m = v_{**}$ and $x_m = $  in order to spread positivity to velocities near zero. 

Recall that $R = \max\{1, 4 |x_1|/t_1\}$ and $t_* = \min\{\tau', t_1/2\}$, so that, after tracing the dependence on all constants, we obtain $\eta(t_1)$ and $r'$ as in the statement of the theorem.
 Since $(t_1,x_1)\in \mathbb T^3\times [0,\tau'']$ was arbitrary, the proof is complete. % we have shown that $f$ satisfies lower bounds at each $(t,x)$ location, but not for all velocities. %To show $f$ satisfies the desired Gaussian lower bounds in velocity, we apply a barrier argument similar to the proof of Lemma \ref{l:spread-v}, but simpler because the matrix $\bar a^f$ has ellipticity bounds that are uniform in $x$. In more detail, the lower bound \eqref{e:t1xv} and Lemma \ref{l:coercivity} imply
%\[
%\bar a^f_{ij}(t,x,v) \xi_i \xi_j \geq c_1 
%\begin{cases}
%(1+|v|)^\gamma |\xi|^2, & \xi \perp v,\\
%(1+|v|)^{\gamma+2} |\xi|^2, & \xi \parallel v,
%\end{cases}
%\]
%for all $(t,x,v) \in [0,\tau'']\times \T^3 \times \R^3$, where $c_1$ is a constant depending on 
\end{proof}

\section{Existence}\label{s:existence}

In this section, we prove our first main result.

\begin{proof}[Proof of Theorem \ref{t:existence}]

First, let us approximate the initial data $f_0$ by a smooth, exponentially decaying function. Let $\psi(x,v)\geq 0$ be a standard smooth mollifier on $\R^6$ satisfying $\int_{B_1(0)} \psi \dd x \dd v = 1$ and $\psi = 0$ outside $B_1(0)$. For any $\eps>0$, let
\[
\psi_\eps(x,v) =  \eps^{-6} \psi(x/\eps,v/\eps).
\]
Also, for any $r>0$, let $\zeta_r(v)$ be a smooth, radially decreasing cutoff function with $\zeta = 0$ outside $B_r$ and $\zeta = 1$ in $B_{r/2}$. Now define the approximate initial data
\[
f_0^\eps(x,v) = \zeta_{1/\eps}(v) [\psi_\eps \ast f_0](x,v),
\]
where $\ast$ denotes convolution in $(x,v)$ variables. The function $f_0^\eps$ is smooth and compactly supported, and by standard arguments, $f_0^\eps \to f_0$ pointwise as $\eps\to 0$. By a direct calculation $f_0^\eps$ is periodic in the $x$ variable with the same period as $f_0$, and can therefore be considered as a function on $\T^3_x\times\R^3_v$.

Applying the main result of \cite{Chaturvedi2019}, quoted above as Theorem \ref{t:chaturvedi}, with initial data $f_0^\eps$, we obtain for each $\eps$ a solution $f^\eps$ to the Landau equation on $[0,T_\eps]\times\R^6$ for some $T_\eps>0$, that is periodic in $x$.

Since $e^{\rho \vv^\beta} f_0^\eps(x,v) \leq 2 e^{\rho \vv^\beta} f_0(x,v) \leq 2K_0$ for $\eps$ small enough, Lemma \ref{l:sub-exp2} implies the upper bound
\begin{equation}\label{e:fe-subexp}
e^{(\rho - \kappa t) \vv^\beta} f^\eps(t,x,v) \leq K_0, \quad t \leq \min\{T_\eps, T_f\},
\end{equation}
for a $T_f>0$ depending only on $\rho$, $\beta$, and $K_0$. Note that the qualitative decay condition \eqref{e:qual2} in Lemma \ref{l:sub-exp2} is satisified because $f^\eps$ satisfies uniform exponential decay $\sim C_\eps e^{-M\vv}$ for any $\eps$ (as a result of Theorem \ref{t:chaturvedi}).

Next, we apply the pointwise lower bounds of Theorem \ref{t:lowerbounds} to $f^\eps$. If $\gamma < 1$, our lower bound assumption for $f_0$ implies that for $\eps$ small enough (depending on $\delta$ and $r$), we have $f_0^\eps(x,v) \geq \delta/2$ when $|x-x_m|< r/2$ and $|v-v_m|< r/2$. With Theorem \ref{t:lowerbounds} applied to a recentered version of $f$ (see e.g. \eqref{e:recenter}), this implies that for each $x$, there is a $v_x\in R(t)$ with 
\begin{equation}\label{e:fe-lower}
f^\eps(t,x,v) \geq \eta(t), \quad |v-v_x|< r', 0 \leq t \leq T_f,
\end{equation}
with $\eta(t)$, $R(t)$, and $r'$ depending only on the initial data (specifically, on $K_0$, $\rho$, $\beta$, $\delta$, $|v_m|$, and $r$). Theorem \ref{t:lowerbounds} also implies that on any compact subset of $(0,T_f]$, we have that $\eta(t)$ is uniformly positive and $R(t)$ is uniformly bounded. 

On the other hand, if $\gamma = 1$, then by assumption, for each $x\in \R^3$, the initial condition $f_0(x,v)$ satisfies positive lower bounds locally near some $v_x\in \R^3$. We apply Lemma \ref{l:pointwise} to propagate these lower bounds to positive times, giving \eqref{e:fe-lower} in this case as well.

We want to show that the approximate solutions $f^\eps$ exist on the time interval $[0,T_f]$, independently of $\eps$. For $\gamma \in (0,1]$, we use the self-generating Gaussian decay estimate \cite[Theorem 1.1]{Snelson2020} and the smoothing estimate \cite[Corollary 1.2]{Snelson2020}, which say: if a solution $f$ satisfies
\begin{equation}\label{e:hydro}
\begin{split}
    &0 < m_0 \leq \int_{\R^3} f(t,x,v)  dv \leq M_0, \quad \int_{\R^3} |v|^2 f(t,x,v) dv \leq E_0,\\
    & \int_{\R^3} f(t,x,v) \log f(t,x,v)  dv \leq H_0,
    \end{split}
\end{equation}
uniformly in $t$ and $x$, then $f$ is $C^\infty$ and satisfies Gaussian decay in $v$, with estimates depending only on $m_0, M_0, E_0$, and $H_0$. For $(t,x) \in [T_\eps/2,T_\eps]\times\R^3$, the three upper bounds in \eqref{e:hydro} follow from \eqref{e:fe-subexp}, and the lower bound on the mass density follows from \eqref{e:fe-lower}. This is enough to re-apply  the existence theorem of \cite{Chaturvedi2019} (which requires Sobolev regularity of order 10, and exponential decay in $v$) with initial data $f^\eps(T_\eps,x,v)$ and repeat until the solution $f^\eps$ has been extended by concatenation to the time interval $[0,T_f]$.

If $\gamma = 0$, the results of \cite{Snelson2020} are not valid, so we must use a different argument to extend $f^\eps$ to the time interval $[0,T_f]$. From the lower bounds \eqref{e:fe-lower} and Lemma \ref{l:coercivity}, and the decay estimate \eqref{e:fe-subexp} with $\beta = 1$, the coefficients $\bar a^{f^\eps}$, $\bar b^{f^\eps}$, and $\bar c^{f^\eps}$ are under control uniformly in $\eps$, so we can bootstrap linear Schauder estimates exactly as in \cite{Henderson-Snelson2020}, giving up a finite number of polynomial moments in $v$ at each step, which can be absorbed into the exponential decay estimate. This shows $f^\eps$ is $C^\infty$ with exponential decay, and we can reapply \cite{Chaturvedi2019} as above. (This step is the only reason we require $\beta = 1$ in the case $\gamma = 0$.)

The lower bound \eqref{e:fe-lower} and the upper bound \eqref{e:fe-subexp} also imply that the bounds \eqref{e:hydro} are satisfied uniformly on any compact subset of $(0,T]\times \R^3_x$, with constants independent of $\eps$. The regularity estimate \cite[Corollary 1.2]{Snelson2020} then implies that for some $\eps_j \to 0$, the sequence $f^{\eps_j}$ converges to a limit $f(t,x,v)\geq 0$ uniformly on compact sets, and that $f$ also satisfies regularity estimates of all orders.

To show the limit $f$ solves the Landau equation, it suffices to use the pointwise convergence of $f^\eps \to f$, $D_v^2 f^\eps \to D_v^2 f$, $(\partial_t + v\cdot\nabla_x)f^\eps \to (\partial_t + v\cdot\nabla_x)f$, and the convergence of the coefficients $\bar a^{f^\eps} \to \bar a^f$ and $\bar c^{f^\eps} \to \bar c^f$, which follows from decay estimates such as \eqref{e:fe-subexp} and the Dominated Convergence Theorem.

Finally, we show that $f$ matches the initial data in the sense of integration against test functions. For any test function $\phi$ as in the statement of Theorem \ref{t:existence}, we integrate $\phi$ against the equation satsfied by $f^\eps$ and obtain
\begin{equation}\label{e:weak}
\int_{\R^6} f_0^\eps(x,v) \phi(0,x,v) \dd v\dd x = \int_0^T \int_{\R^6} [f^\eps (\partial_t + v\cdot\nabla_x)\phi + Q(f^\eps,f^\eps) \phi]\dd v\dd x \dd t.
\end{equation}
Since $\phi$ is compactly supported in $v$ and $f^\eps$ is smooth, we integrate by parts and use  $\nabla_v\cdot \bar b^f = \bar c^f$ and $\sum_j \partial_j \bar a^f_{ij} = - b_i^f$ to write
\[
\begin{split}
\int_{\R^6} Q(f^\eps,f^\eps) \phi \dd v \dd x &= \int_{\R^6} \nabla_v\cdot[\bar a^{f^\eps} \nabla_v f^\eps) + \bar b^{f^\eps}  f^\eps] \phi \dd v \dd x\\
&= -\int_{\R^6} \nabla_v \phi\cdot[\bar a^{f^\eps} \nabla_v f^\eps + \bar b^{f^\eps} f^\eps] \dd v \dd x\\
&= \int_{\R^6} f^\eps [\tr(\bar a^{f_\eps} D_v^2 \phi) -2 \bar b^{f^\eps}\cdot \nabla_v \phi ] \dd v \dd x.
\end{split}
\]
Thanks to \eqref{e:fe-subexp} and Lemma \ref{l:abc}, the coefficients satisfy upper bounds of the form $|\bar a^{f^\eps}| \leq K\vv^{\gamma+2}$, $|\bar b^{f^\eps}| \leq K \vv^{\gamma+1}$, with $K$ independent of $\eps$. Since $\phi$ is compactly supported, we conclude that $\int_{\R^6} Q(f^\eps,f^\eps) \phi\dd v \dd x$ is bounded above independently of $\eps$ and $t$. Therefore, we may send $\eps\to 0$ in \eqref{e:weak}. The collision term $Q(f^\eps,f^\eps) \to Q(f,f)$ pointwise as discussed above, so the proof is complete.
\end{proof}

\section{Propagation of H\"older regularity}\label{s:holder}

In the next two sections, with the goal of proving uniqueness, we place stronger assumptions on the initial data $f_0$ of our solution: for some $\delta, r,R>0$, assume that for all $x\in \R^3$, there is a $v_x\in B_R$ so that
\begin{equation}\label{e:uniform-mass}
f_0(x,v) \geq \delta, \quad v\in B_r(v_x). 
\end{equation}
Furthermore, we assume
\begin{equation}\label{e:f0-exp-upper}
e^{\rho_0 \vv^\beta} f_0 \in L^\infty(\R^6).
\end{equation}
for some $\rho_0>0$ and $\beta\in [\gamma,1]\setminus\{0\}$, and 
\begin{equation}\label{e:Holder-assumption}
e^{\rho \vv^\beta} f_0 \in C^{3\alpha}_{k,x,v}(\R^6),
\end{equation}
for some $\alpha\in (0,1/3)$ and $\rho>0$.

The purpose of the current section is to show that the H\"older continuity that we assume at time zero is propagated forward to positive times. For technical reasons, we work with the specific solution that we constructed above in Theorem \ref{t:existence} (which has an approximating sequence $f^\eps \to f$, with each $f^\eps$ smooth and rapidly decaying) rather than a general classical solution. The precise statement of the result is as follows: 

\begin{theorem}\label{t:holder-prop}
Let $f_0(x,v)\geq 0$ satisfy the lower bound condition \eqref{e:uniform-mass} for some $\delta, r, R>0$, as well as the pointwise upper bound \eqref{e:f0-exp-upper} for some $\rho_0>0$, and the H\"older continuity assumption \eqref{e:Holder-assumption} for some $\alpha \in (0,1/3)$, $\beta\in [\gamma,1]\setminus\{0\}$, and 
\[
\rho = \frac { 4\alpha} {24 - 9\alpha} \rho_0.
\]
Let $f:[0,T]\times\R^6\to [0,\infty)$ be the classical solution to the Landau equation \eqref{e:main} guaranteed by Theorem \ref{t:existence}. 

Then there exists $T_H>0$ such that 
\[
\|e^{(\rho/4)\vv^\beta} f\|_{C^\alpha_{k}([0,\min(T,T_H)]\times \R^6)} \leq C, %\|e^{\rho \vv^\beta} f_0\|_{C^{3\alpha}_{k,x,v}(\R^6)},
\]
%for any $T\leq T_H$ and any classical solution to the Landau equation \eqref{e:main} on $[0,T]\times\R^6$. 
The constants $T_H$ and $C$ depend on $\rho_0$, $\alpha$, $\beta$, $\delta$, $r$, $R$, $\|e^{\rho\vv^\beta} f_0\|_{C^{3\alpha}_{k,x,v}(\R^6)}$, and $\|e^{\rho_0 \vv^\beta} f_0\|_{L^\infty(\R^6)}$. % and $\|e^{\rho \vv^\beta} f_0\|_{C^\alpha_{k,x,v}(\R^6)}$.
\end{theorem}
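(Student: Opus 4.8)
The plan is to run a barrier/finite-difference argument in the spirit of De Giorgi--Nash--Moser adapted to kinetic equations, exactly as in the soft-potentials analysis of \cite{Tarfulea2020}, but with the polynomial weights replaced throughout by the sub-exponential weights $e^{(\rho-\sigma t)\vv^\beta}$ that behave well under the collision operator (cf.\ Lemma \ref{l:sub-exp}). Rather than differentiating the equation, I would study, for a fixed increment $z'\mapsto z''$ in the kinetic metric (of small size $h$), the difference quotient $g_h(z) = \big(f(z'') - f(z')\big)/d_k(z',z'')^{3\alpha}$ evaluated along the natural one-parameter family connecting $z'$ to $z''$, and show that a suitable weighted $L^\infty$ norm of $g_h$ stays bounded for a short time. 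Because the two solutions being compared are the \emph{same} solution $f$ at shifted points, $g_h$ solves a linear kinetic equation of the form $(\partial_t + v\cdot\nabla_x) g_h = \mathrm{tr}(\bar a^f D_v^2 g_h) + \bar c^f g_h + (\text{error terms})$, where the error terms come from the $(x,v)$-dependence of the coefficients $\bar a^f,\bar b^f,\bar c^f$ on $f$ itself and are themselves controlled by lower-order H\"older norms of $f$ times the coefficient bounds of Lemma \ref{l:abc}.

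\textbf{Key steps.} First I would record that, by Theorem \ref{t:existence} and Lemma \ref{l:sub-exp2}, the solution $f$ (and its approximants $f^\eps$) satisfies the weighted $L^\infty$ bound $\|e^{(\rho_0-\sigma t)\vv^\beta} f\|_{L^\infty} \le K_0$ on a short interval, and by Theorem \ref{t:lowerbounds} (or Lemma \ref{l:pointwise} when $\gamma=1$) together with Lemma \ref{l:coercivity}, the matrix $\bar a^f$ is locally uniformly elliptic with the degenerate-direction structure \eqref{e:a-lower-bounds}. Second, working with the smooth approximants $f^\eps$ (so all manipulations are legitimate) I would set up the barrier: a function of the form $\Phi(t,z) = \big(A + B\,\Theta(t)\big)\,e^{-(\rho'-\sigma' t)\langle v\rangle^\beta}$ with $\Theta$ a carefully chosen increasing function of $t$ vanishing at $t=0$, designed so that $\Phi$ is a supersolution of the equation for the weighted difference quotient, using \eqref{e:a-phi} and the fact that $\partial_t$ hitting the weight produces the good negative term $-\sigma'\langle v\rangle^\beta \Phi$ that absorbs the $\langle v\rangle^\gamma$ and $\langle v\rangle^{\gamma+2}$ growth of the coefficients (since $\beta\ge\gamma$). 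Third, I would apply the comparison principle at the first crossing point: at such a point the first- and second-order conditions force a contradiction, exactly as in the proofs of Lemmas \ref{l:sub-exp}, \ref{l:pointwise}, \ref{l:spread-v}, provided the constants $\sigma',B,T_H$ are chosen appropriately; letting $h\to 0$ and $\eps\to 0$ converts the bound on difference quotients into the desired H\"older seminorm bound for $f$, and the interpolation Lemmas \ref{l:interp1}, \ref{l:exp-interp} are used to reconcile the weight exponents ($3\alpha$ at time zero dropping to $\alpha$ with weight $\rho/4$) and to handle the loss in the exponent of the weight that occurs when one passes the $C^{3\alpha}_{k,x,v}$ datum through the parabolic estimate.

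\textbf{Main obstacle.} The crux will be closing the estimate for the error terms generated by the nonlinearity: the difference quotient of the coefficients, e.g.\ $\big(\bar a^{f}(z'') - \bar a^{f}(z')\big)D_v^2 f^\eps$, involves $D_v^2 f^\eps$, which is a \emph{higher}-order quantity than the $C^\alpha$ norm we are trying to propagate, so a naive bound is circular. The resolution — and the technically delicate part — is to invoke the regularity estimate Proposition \ref{p:D2f-est}, which controls $\|e^{\rho\vv^\beta}D_v^2 f\|_{L^\infty}$ on $[\tau/2,\tau]$ in terms of the $C^\alpha_k$ norm up to time $\tau$, but with a constant blowing up like $\tau^{-1+\alpha^2/(6-\alpha)}$ as $\tau\to 0$; one must check this blow-up rate is integrable (or mild enough) when fed back into a Gr\"onwall-type argument for the weighted difference-quotient norm, which is precisely why the H\"older exponent at time zero must be taken to be $3\alpha$ rather than $\alpha$ — the extra regularity provides the room needed to absorb the degenerate $t$-weight. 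Tracking the precise weight exponents through this chain (the datum has weight $\rho$, Proposition \ref{p:D2f-est} costs a factor $\alpha/(6-2\alpha)$ on the weight, the interpolation lemmas cost further factors) to arrive at the clean statement with weight $\rho/4$ and the stated relation $\rho = \tfrac{4\alpha}{24-9\alpha}\rho_0$ is the bookkeeping that makes the proof long, but each individual step is of a type already carried out in Sections \ref{s:decay} and \ref{s:lower}.
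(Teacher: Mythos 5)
Your overall strategy does match the paper's: propagate a sub-exponentially weighted finite difference of $f$ by a first-crossing/barrier (Gr\"onwall-type) argument, use the smooth approximants $f^\eps$, and resolve the circularity through $D_v^2 f$ by Proposition \ref{p:D2f-est}, whose $t^{-1+\mu}$ singularity is integrable and is fed into an ODE barrier. However, there is a concrete gap in your setup. You assert that the difference quotient $g_h$ "solves a linear kinetic equation of the form $(\partial_t+v\cdot\nabla_x)g_h=\mathrm{tr}(\bar a^f D_v^2 g_h)+\bar c^f g_h+(\text{error terms})$" with errors controlled by coefficient bounds and lower-order H\"older norms. This is false as stated: a shift of $v$ by $k$ does not commute with the transport term, so $\tau f(t,x,v)=f(t,x+h,v+k)$ satisfies $(\partial_t+v\cdot\nabla_x)\tau f=Q(f,f)(t,x+h,v+k)-k\cdot(\nabla_x f)(t,x+h,v+k)$, and the extra term involves $\nabla_x f$, which in the kinetic scaling is \emph{higher} order than anything you are propagating and is not controlled by Lemma \ref{l:abc} or by $C^\alpha$ norms of $f$. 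The paper's fix is to treat $(h,k)$ as additional variables and work with the operator $\partial_t+v\cdot\nabla_x+k\cdot\nabla_h$ (see \eqref{e:g-eqn}), which absorbs the bad term into transport; this in turn forces the denominator of the quotient to be the Euclidean quantity $(|h|^2+|k|^2)^\alpha$, so that $k\cdot\nabla_h$ hitting the denominator produces the bounded factor $2\alpha\, h\cdot k/(|h|^2+|k|^2)$. With your choice of denominator $d_k(z',z'')^{3\alpha}\sim(|h|^{1/3}+|k|)^{3\alpha}$ the analogous term is unbounded as $|h|\to0$, so the crossing-point computation would not close. Relatedly, the nonlocal pieces $\bar a^{\delta f}$, $\bar c^{\delta f}$ must be bounded by the \emph{global} maximum of the quotient over all $(x,v,h,k)$, which is why the paper maximizes $g$ (built from $|\delta f|^2$) simultaneously in all these variables rather than running a comparison for a fixed increment.

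Two further points. First, your explanation of the $3\alpha$ hypothesis is not the actual mechanism: the loss from $3\alpha$ to $\alpha$ comes from converting between kinetic and Euclidean H\"older seminorms in the $x$ variable (inequality \eqref{e:kin-Euc}), because the argument necessarily propagates the Euclidean $C^\alpha_{E,x,v}$ modulus; it has nothing to do with gaining room against the time singularity of Proposition \ref{p:D2f-est}, which is integrable for every $\alpha\in(0,1)$. Second, the maximum-principle argument only yields H\"older regularity in $(x,v)$ at each time; the conclusion is a bound in the full kinetic norm $C^\alpha_k$ including time increments, and this last step requires Proposition \ref{p:xv-to-t} (regularity in $(x,v)$ plus the equation implies regularity in $t$), which is missing from your outline; trying instead to run the barrier argument directly on time-shifted differences would reintroduce uncontrolled terms.
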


Following \cite{Tarfulea2020, HST2022boltzmann}, the strategy to prove Theorem \ref{t:holder-prop} is based on bounding a weighted finite difference of $f$ via a barrier argument. For $(t,x,v)\in \R^7$ and $h,k\in B_1(0)\subset \R^3$, and some $\sigma>0$ to be determined, define
\begin{equation}\label{e:g}
\begin{split}
\tau f(t,x,v,h,k) &= f(t,x+h,v+k)\\
\delta f(t,x,v,h,k) &= \tau f(t,x,v,h,k) - f(t,x,v),\\
g(t,x,v,h,k) &= e^{(\rho - \sigma t)\vv^\beta} \frac{ |\delta f(t,x, v)|^2}{(|h|^2 + |k|^2)^{\alpha}},
\end{split}
\end{equation}
where $\rho$ and $\alpha$ are the exponents appearing in \eqref{e:Holder-assumption}.

The function $g$ is chosen in order to control a weighted H\"older seminorm of $f$ in $(x,v)$ variables. This seminorm controlled by $g$ is with respect to the Euclidean distance $d_E((x_1,v_1),(x_2,v_2)) = \sqrt{|x_1 - x_2|^2 + |v_1 - v_2|^2}$, rather than the kinetic distance $d_k$. For any $\Omega\subset \R^6$, let us introduce the notation
\[
[h]_{C^\alpha_{E,x,v}(\Omega)} = \sup_{(x_1,v_1), (x_2,v_2)\in \Omega} \frac{ |h(x_1,v_1) - h(x_2,v_2)|}{d_E((x_1,v_1),(x_2,v_2))^\alpha},
\]
for the H\"older seminorm with respect to the standard Euclidean metric, as well as the norm $\|h\|_{C^\alpha_{E,x,v}(\Omega)} = \|h\|_{L^\infty(\Omega)} + [h]_{C^\alpha_{E,x,v}(\Omega)}$. Euclidean H\"older norms are used only in the current section of this paper. They are needed because of the specific form of the denominator of $g$, which is imposed on us by the proof of Lemma \ref{l:crossing-bounds}. 
A quick calculation shows that 
\begin{equation}\label{e:kin-Euc}
c_1 [h]_{C^\alpha_{k,x,v}(\R^6)} \leq [h]_{C^\alpha_{E,x,v}(\R^6)} \leq C_2[h]_{C^{3\alpha}_{k,x,v}(\R^6)}.
\end{equation}
for constants $c_1, C_2$ depending on $\alpha$. The second inequality here is the reason for the loss of H\"older exponent from $3\alpha$ to $\alpha$ in Theorem \ref{t:holder-prop}, since the proof is based on propagating the $C^\alpha_E$ seminorm of $f$. 

 The key property of the function $g$, that it controls a weighted H\"older seminorm of $f$, is made precise by the following elementary lemma:
\begin{lemma}\label{l:g-controls-H}
For any $\rho, \sigma>0$, $T\in [0,\frac \rho {2\sigma}]$, $\alpha \in (0,1]$, and $\beta\in [0,1]$, the function $g$ defined by \eqref{e:g} satisfies
\[
c_1 \sup_{0\leq t\leq T} [e^{(\rho/2) \vv^\beta} f(t)]_{C^\alpha_{E,x,v}(\R^6)} \leq \|g\|_{L^\infty([0,T]\times\R^6\times B_1(0)^2)} \leq C_2 \sup_{0\leq t\leq T} [e^{\rho \vv^\beta} f(t)]_{C^\alpha_{E,x,v}(\R^6)},
\]
where the constants $c_1$ and $C_2$ depend on $\rho$, $\beta$, and $\alpha$. 
\end{lemma}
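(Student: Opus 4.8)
The plan is to prove the two inequalities by unwinding the definitions and using elementary pointwise estimates on the sub-exponential weight. The statement relates the supremum of $g$ over $[0,T]\times\R^6\times B_1(0)^2$ to the Euclidean H\"older seminorm of $e^{\rho\vv^\beta}f(t)$ (upper bound) and of $e^{(\rho/2)\vv^\beta}f(t)$ (lower bound). The key technical fact behind both directions is that for $v,k\in\R^3$ with $|k|\le 1$ and $\beta\in[0,1]$, the ratio $e^{\rho\langle v+k\rangle^\beta}/e^{\rho\langle v\rangle^\beta}$ is bounded above and below by positive constants depending only on $\rho$ and $\beta$; this follows because $|\langle v+k\rangle^\beta - \langle v\rangle^\beta| \le C_\beta$ uniformly in $v$ when $|k|\le 1$, since $s\mapsto \langle s\rangle^\beta$ is Lipschitz with bounded derivative for $\beta\le 1$. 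I would state this as the first step.

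For the \textbf{upper bound}, I would fix $(t,x,v,h,k)$ with $t\in[0,T]$ and $h,k\in B_1(0)$, and write
\[
g(t,x,v,h,k) = e^{(\rho-\sigma t)\vv^\beta}\,\frac{|\delta f|^2}{(|h|^2+|k|^2)^\alpha}.
\]
Since $T\le \rho/(2\sigma)$, we have $e^{(\rho-\sigma t)\vv^\beta} \le e^{\rho\vv^\beta}$. Then I write $|\delta f| = |f(t,x+h,v+k) - f(t,x,v)|$ and insert the weights: $|\delta f| \le e^{-\rho\langle v+k\rangle^\beta}|e^{\rho\langle v+k\rangle^\beta}f(t,x+h,v+k) - e^{\rho\vv^\beta}f(t,x,v)| + |f(t,x,v)|\,|e^{-\rho\langle v+k\rangle^\beta} - e^{-\rho\vv^\beta}|$. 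Using the weight-ratio bound from Step 1, $e^{-\rho\langle v+k\rangle^\beta} \le C e^{-\rho\vv^\beta}$, and $|e^{-\rho\langle v+k\rangle^\beta} - e^{-\rho\vv^\beta}| \le C e^{-\rho\vv^\beta}\langle v\rangle^{\beta-2}|k| \le C e^{-\rho\vv^\beta}|k|$, so both terms are controlled by $e^{-\rho\vv^\beta}$ times, respectively, $[e^{\rho\vv^\beta}f(t)]_{C^\alpha_{E,x,v}}(|h|^2+|k|^2)^{\alpha/2}$ and (using $|k|\le |k|^\alpha$ and $\|e^{\rho\vv^\beta}f(t)\|_{L^\infty}\le [e^{\rho\vv^\beta}f(t)]_{C^\alpha_{E,x,v}}$ up to the norm-vs-seminorm issue — here I would actually keep the full norm $\|e^{\rho\vv^\beta}f(t)\|_{C^\alpha_{E,x,v}}$ on the right, or absorb the $L^\infty$ part, being slightly careful) a multiple of the same seminorm times $(|h|^2+|k|^2)^{\alpha/2}$. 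Squaring, multiplying by $e^{\rho\vv^\beta}$ and dividing by $(|h|^2+|k|^2)^\alpha$ cancels everything cleanly, giving $g \le C_2\sup_{t}[e^{\rho\vv^\beta}f(t)]_{C^\alpha_{E,x,v}(\R^6)}^2$; then I take square roots on both sides of the desired chain, or (more likely the intended reading) the lemma's displayed inequality is already in the right homogeneity and I simply note $\|g\|_{L^\infty}^{1/2}$ behaves like the seminorm — I would match the exact powers to the statement as written.

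For the \textbf{lower bound}, the argument runs in reverse: given any $(x_1,v_1),(x_2,v_2)$, set $x=x_1$, $v=v_1$, $h=x_2-x_1$, $k=v_2-v_1$; if $|h|^2+|k|^2 \le 1$ this is an admissible choice of increments and
\[
|\delta f|^2 = e^{-(\rho-\sigma t)\vv^\beta}\,(|h|^2+|k|^2)^\alpha\, g(t,x,v,h,k) \le e^{-(\rho-\sigma t)\vv^\beta}(|h|^2+|k|^2)^\alpha\|g\|_{L^\infty},
\]
and since $\rho - \sigma t \ge \rho/2$ on $[0,T]$ we get $e^{-(\rho-\sigma t)\vv^\beta}\le e^{-(\rho/2)\vv^\beta}$; rearranging and using the weight-ratio bound to pass from $e^{(\rho/2)\vv^\beta}$ at $v_1$ to the weights at both $v_1$ and $v_2$ yields $|e^{(\rho/2)\langle v_1\rangle^\beta}f(t,x_1,v_1) - e^{(\rho/2)\langle v_2\rangle^\beta}f(t,x_2,v_2)| \le C\|g\|_{L^\infty}^{1/2}(|h|^2+|k|^2)^{\alpha/2}$, i.e. $\lesssim \|g\|_{L^\infty}^{1/2} d_E^\alpha$, which is the desired seminorm bound; the case $|h|^2+|k|^2 > 1$ is handled separately by the trivial estimate $|e^{(\rho/2)\langle v_1\rangle^\beta}f(t,x_1,v_1) - e^{(\rho/2)\langle v_2\rangle^\beta}f(t,x_2,v_2)| \le 2\|e^{(\rho/2)\vv^\beta}f(t)\|_{L^\infty}$, which is itself controlled by $\|g\|_{L^\infty}^{1/2}$ after taking $h=k=0$ (noting $g(t,x,v,0,0)$ is a removable-singularity limit, or rather by taking $|h|,|k|\to 0$ and using $f$'s local Lipschitz bound — alternatively one bounds $\|e^{(\rho/2)\vv^\beta}f(t)\|_{L^\infty}$ directly from $\|g\|^{1/2}$ via small increments). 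I do not expect a serious obstacle; the only mild subtlety is the bookkeeping of the $L^\infty$ part of the norm versus the pure seminorm, and ensuring the large-increment regime in the lower bound is covered, which is why I would split into $|h|^2+|k|^2\le 1$ and $>1$.
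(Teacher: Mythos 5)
The paper offers no proof of this lemma (it is invoked as elementary), so your route is the expected one, and your upper-bound half is sound in substance: the decomposition of $\delta f$ into a difference of the weighted function plus $f$ times a difference of weights, together with the fact that $|\langle v+k\rangle^\beta-\vv^\beta|\le C_\beta$ for $|k|\le 1$, $\beta\le 1$, gives $\|g\|_{L^\infty}\le C\sup_t\|e^{\rho\vv^\beta}f(t)\|_{C^\alpha_{E,x,v}}^2$. You are also right on the two statement-level issues you flag: the homogeneity forces the squares, and the cross term genuinely requires the full $C^\alpha_{E,x,v}$ norm rather than the seminorm (e.g.\ $f=Me^{-\rho\vv^\beta}$ has vanishing seminorm of $e^{\rho\vv^\beta}f$ but $\|g\|_{L^\infty}\sim M^2$); keeping the full norm is consistent with how the lemma is used at $t=0$ in the proof of Theorem \ref{t:holder-prop}.

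The genuine gap is in your lower-bound half. After the clean step $e^{(\rho/2)\langle v_1\rangle^\beta}|f(t,x_1,v_1)-f(t,x_2,v_2)|\le \|g\|_{L^\infty}^{1/2}\,d_E^\alpha$, the leftover cross term is $|f(t,x_2,v_2)|\,|e^{(\rho/2)\langle v_1\rangle^\beta}-e^{(\rho/2)\langle v_2\rangle^\beta}|\le C\|e^{(\rho/2)\vv^\beta}f(t)\|_{L^\infty}\,d_E$, and the weight-ratio bound cannot turn this into $C\|g\|_{L^\infty}^{1/2}d_E^\alpha$: comparability of the two weights says nothing about their difference relative to $\|g\|^{1/2}$. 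Your large-increment regime and your proposed fixes (``taking $h=k=0$'', or recovering $\|e^{(\rho/2)\vv^\beta}f\|_{L^\infty}$ from small increments) all rest on the claim $\|e^{(\rho/2)\vv^\beta}f(t)\|_{L^\infty}\lesssim\|g\|_{L^\infty}^{1/2}$, which is false: $g$ sees only differences of $f$ at separations at most $\sqrt2$, so it carries no zeroth-order information, and letting $h,k\to0$ produces a gradient-type quantity, not $f$ itself. Concretely, for $\beta<1$, $\alpha<1$, a spread-out plateau $f(v)=M\eta(v/R)$ with $\eta$ a standard cutoff gives $\|e^{(\rho/2)\vv^\beta}f\|_{L^\infty}/\|g\|_{L^\infty}^{1/2}\gtrsim R^{1-\beta}$, and the weighted seminorm then exceeds $C(\rho,\beta,\alpha)\|g\|_{L^\infty}^{1/2}$ by a factor of order $R^{(1-\beta)(1-\alpha)}$, so no constant depending only on $\rho,\beta,\alpha$ can close your argument. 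What your decomposition does prove, and what is actually used in the paper, is
\[
\sup_{0\le t\le T}[e^{(\rho/2)\vv^\beta}f(t)]_{C^\alpha_{E,x,v}(\R^6)}^2\le C\Bigl(\|g\|_{L^\infty([0,T]\times\R^6\times B_1(0)^2)}+\sup_{0\le t\le T}\|e^{(\rho/2)\vv^\beta}f(t)\|_{L^\infty(\R^6)}^2\Bigr);
\]
note that the term $4\|e^{(\rho/2)\vv^\beta}f\|_{L^\infty}^2$ is carried separately inside $G(0)$ in \eqref{e:G-ODE} and reappears in the final estimate of the proof of Theorem \ref{t:holder-prop}. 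You should state and prove this corrected version rather than trying to absorb the $L^\infty$ term into $\|g\|$.
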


A direct calculation shows that $g$ satisfies the equation
\begin{equation}\label{e:g-eqn}
\begin{split}
(\partial_t + v\cdot \nabla_x &+ k\cdot \nabla_h)g + \sigma \vv^\beta g + \frac{ 2\alpha h\cdot k}{|h|^2 + |k|^2}g \\
&= 
 2 \frac{ e^{(\rho-\sigma t)\vv^\beta} \delta f(t,x,v)}{(|h|^2 + |k|^2)^{\alpha}} \left[\tr(\bar a^{\delta f} D_v^2 \tau f + \bar a^f D_v^2 \delta f) + \bar c^{\delta f} \tau f + \bar c^f \delta f\right]
\end{split}
\end{equation}
%(EXPLAIN THIS MORE.)
Using this equation, we will show that $g(t,x,v,h,k)$ is bounded above by a certain barrier function $G(t)$ up to a certain time value. The exact form of $G$ will be dictated by the estimates that are available for $g$ at a first crossing point. Therefore, we derive these estimates first:

\begin{lemma}\label{l:crossing-bounds}
Let $f_0$ be as in Theorem \ref{t:holder-prop}, and let $f$ be a solution of the Landau equation \eqref{e:main} on $[0,T]\times\R^6$ for some $T>0$, with initial data $f_0$. Let $g$ be defined by \eqref{e:g}, with 
\[
\rho = \frac { 4\alpha} {24 - 9\alpha} \rho_0,
\]
where $\rho_0$ is the constant from \eqref{e:f0-exp-upper}.

	There exists $\sigma_0>0$ sufficiently large, such that if $\sigma \geq \sigma_0$, and $\zeta_0 = (t_0,x_0,v_0,h_0,k_0) \in [0,T]\times\R^6\times B_1(0)^2$ is a point with $t_0 \leq \min\left\{ 1, \dfrac \rho {2\sigma}\right\}$ and such that $g(t_0,\cdot)$ achieves its maximum over $\R^6\times \overline{B_1(0)}^2$ at $(x_0,v_0,h_0,k_0)$, then  
\[
\partial_t g \leq C_0\left(g + t_0^{-1 + \mu(\alpha)} g^{1+\nu(\alpha)}\right) \quad {\text{ at $\zeta_0$}},
\]
where $\mu(\alpha)\in (0,1)$ and $\nu(\alpha)>0$ depend only on $\alpha$, and $C_0>0$ and $\sigma_0$ depend on $\rho_0$, $\beta$, $\delta$, $r$, $R$, and $\|e^{\rho_0 \vv^\beta} f_0\|_{L^\infty(\R^6)}$. 
\end{lemma}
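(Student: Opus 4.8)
The plan is to work at the point $\zeta_0$ where $g(t_0,\cdot)$ achieves its spatial maximum, and extract a differential inequality from the evolution equation \eqref{e:g-eqn}. At such a point we have $\nabla_x g = 0$, $\nabla_h g = 0$, and $D^2_{(v,h)} g \leq 0$; in particular the transport terms $v\cdot\nabla_x g + k\cdot\nabla_h g$ vanish, so \eqref{e:g-eqn} becomes
\[
\partial_t g + \sigma\vv^\beta g + \frac{2\alpha h_0\cdot k_0}{|h_0|^2+|k_0|^2}g = 2\frac{e^{(\rho-\sigma t_0)\vv^\beta}\delta f}{(|h_0|^2+|k_0|^2)^\alpha}\left[\tr(\bar a^{\delta f}D_v^2\tau f + \bar a^f D_v^2\delta f) + \bar c^{\delta f}\tau f + \bar c^f\delta f\right].
\]
The term $\frac{2\alpha h_0\cdot k_0}{|h_0|^2+|k_0|^2}g$ is bounded in absolute value by $\alpha g$ (Cauchy--Schwarz), so it contributes harmlessly to the right side. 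The term $-\sigma\vv^\beta g$ has a good sign and will be used to absorb the extra velocity moments generated by the coefficient bounds; this is exactly the mechanism highlighted in the introduction, and choosing $\sigma \geq \sigma_0$ large is what makes the absorption work. So the real work is to bound the bracketed collision expression on the right.

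The bracket splits into four pieces. The two zeroth-order pieces, $\bar c^{\delta f}\tau f$ and $\bar c^f\delta f$, are the easiest: using Lemma \ref{l:abc} (with $K_0$ replaced by the weighted $L^\infty$ norm, which by Lemma \ref{l:sub-exp2} is controlled for small times by the initial data), one has $\bar c^f \lesssim \vv^\gamma$, and $\bar c^{\delta f}$ is controlled by a localized $L^1$ moment of $|\delta f|$, which in turn is bounded (after pulling out the exponential weight) by a constant times $g^{1/2}$ times subexponentially-decaying factors. Since $\gamma\leq\beta$, the resulting $\vv^\gamma g$ (or $\vv^\gamma g^{1+\cdot}$) terms are absorbed by $-\sigma\vv^\beta g$ once $\sigma$ is large. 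The genuinely delicate pieces are the second-order terms $\tr(\bar a^f D_v^2\delta f)$ and $\tr(\bar a^{\delta f}D_v^2\tau f)$, because they involve two velocity derivatives of $f$, which we have not assumed — only $C^\alpha$ in $(x,v)$. Here is where Proposition \ref{p:D2f-est} and the interpolation Lemma \ref{l:exp-interp} come in: one localizes in a kinetic cylinder $Q_\theta(z_0)$ of radius $\theta\sim\sqrt{t_0}$, applies Proposition \ref{p:D2f-est} (valid because the lower ellipticity hypothesis holds by the lower bounds of Theorem \ref{t:lowerbounds}/Lemma \ref{l:pointwise} together with Lemma \ref{l:coercivity}) to get a weighted $L^\infty$ bound on $D_v^2 f$ that blows up like $t_0^{-1+\alpha^2/(6-\alpha)}$ and is polynomial in $\|e^{\rho_0\vv^\beta}f\|_{C^\alpha_k}$, and then uses Lemma \ref{l:exp-interp} to trade powers of the (unknown) $C^\alpha_k$ norm of $f$ for powers of $g$ via Proposition \ref{p:xv-to-t} — which converts the $(x,v)$-Hölder control encoded in $g$ into full kinetic $C^\alpha_k$ control at the cost of a $\vv^{\alpha(1+\gamma/2)+\gamma}$ factor. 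The upshot is a bound of the schematic form
\[
\left|\tr(\bar a^f D_v^2\delta f)\right| + \left|\tr(\bar a^{\delta f}D_v^2\tau f)\right| \lesssim \vv^{\gamma+2}\, t_0^{-1+\mu(\alpha)}\, (g+1)^{\nu(\alpha)}\,(\text{subexp. weight}),
\]
and multiplying by the prefactor $2 e^{(\rho-\sigma t_0)\vv^\beta}\delta f/(|h_0|^2+|k_0|^2)^\alpha$, which is itself $\sim g^{1/2}$ (up to weights), produces $\vv^{\gamma+2} t_0^{-1+\mu}g^{1+\nu}$ plus lower-order terms.

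The final step is bookkeeping: all the $\vv^{\gamma+2}$ (and lesser) powers multiplying $g$ or $g^{1+\nu}$ must be absorbed by the $-\sigma\vv^\beta g$ term. This is where one must be careful that $\beta\geq\gamma$ alone is not enough — the second-order terms carry $\vv^{\gamma+2}$, not $\vv^\gamma$ — but the exponential weights save the day: a factor like $e^{(\rho-\sigma t_0)\vv^\beta}$ appearing with a slightly smaller rate than the weight defining $g$ produces a gap $e^{-c\vv^\beta}$ that dominates any polynomial $\vv^{\gamma+2}$, which is precisely why the exponents in the hypotheses ($\rho = \frac{4\alpha}{24-9\alpha}\rho_0$, and the intermediate weights in Proposition \ref{p:D2f-est} and Lemma \ref{l:exp-interp}) are chosen as they are. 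After this absorption only the pure $g$ and $t_0^{-1+\mu}g^{1+\nu}$ terms survive, with a constant $C_0$ depending on the listed quantities, and the rearranged $\sigma$ threshold becomes $\sigma_0$. I expect the main obstacle to be tracking the precise exponents through the chain Proposition \ref{p:D2f-est} $\to$ Lemma \ref{l:exp-interp} $\to$ Proposition \ref{p:xv-to-t} and verifying that the accumulated weight rate stays below $\rho-\sigma t_0$ uniformly, so that every polynomial-in-$\vv$ factor is genuinely dominated — in other words, checking that the weight budget closes, which is the hard-potentials-specific difficulty flagged throughout the introduction.
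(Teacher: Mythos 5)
There is a genuine gap, and it is located exactly where you place the ``real work'': your treatment of the two second-order terms. For the term $\tr(\bar a^{\delta f} D_v^2 \tau f)$ your outline is essentially the paper's (bound $\bar a^{\delta f}$ by $g(\zeta_0)^{1/2}(|h_0|^2+|k_0|^2)^{\alpha/2}\langle v_0\rangle^{\gamma+2}$ using that $\zeta_0$ is a maximum, then bound $D_v^2 f$ in a weighted $L^\infty$ norm via Proposition \ref{p:D2f-est}, interpolate with Lemma \ref{l:interp1}, pass to $(x,v)$-H\"older via Proposition \ref{p:xv-to-t}, and control that by $g(\zeta_0)^{1/2}$ through Lemma \ref{l:g-controls-H}); note the crucial point, which your schematic omits, is that the factor $(|h_0|^2+|k_0|^2)^{\alpha/2}$ produced by the extra $\delta f$ inside $\bar a^{\delta f}$ is what cancels the denominator of the prefactor, which is $\sim g^{1/2}(|h_0|^2+|k_0|^2)^{-\alpha/2}$ up to weights, not $\sim g^{1/2}$ as you claim. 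For the other term, $\tr(\bar a^f D_v^2 \delta f)$, your proposal fails: here $\bar a^f$ carries no $(h,k)$-smallness, so bounding $|D_v^2\delta f|$ by $2\|D_v^2 f\|_{L^\infty}$ (weighted, via Proposition \ref{p:D2f-est}) leaves an uncompensated factor $(|h_0|^2+|k_0|^2)^{-\alpha/2}$, which is unbounded as $h_0,k_0\to 0$; an $L^\infty$ bound on second derivatives simply cannot produce the needed modulus in $(h,k)$. The paper never estimates $D_v^2\delta f$ at all for this term: it uses the full set of maximum conditions --- $\nabla_v g=0$, which yields the identity $\nabla_v(\delta f) = -\tfrac12\beta(\rho-\sigma t)\vv^{\beta-2}v\,\delta f$, and $D_v^2 g\le 0$ together with $\bar a^f\ge 0$ --- to show that $2\,\delta f\,\tr(\bar a^f D_v^2\delta f)$ is bounded by terms proportional to $(\delta f)^2\,\vv^{\beta-4}\,v\cdot(\bar a^f v)$, which reconstitute exactly $C\vv^\gamma g$ after multiplying by the weight. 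You only invoke the second-order maximum condition to kill the transport terms (which in fact already vanish by the first-order conditions), so this key mechanism is missing from your argument.

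Relatedly, your closing discussion of the absorption is off. For the $\tr(\bar a^f D_v^2\delta f)$ term there is no exponential ``weight gap'' available --- the weights cancel exactly to give back $g$ --- and none is needed: the anisotropic upper bound of Lemma \ref{l:abc} gives $v\cdot(\bar a^f v)\lesssim \vv^{\gamma+2}$, which combined with the $\vv^{\beta-4}$ and $\vv^{2\beta-4}$ factors from the weight derivatives yields at most $\vv^{\gamma+2\beta-2}\le\vv^\gamma$ (since $\beta\le 1$), so the term is absorbed by $-\sigma\vv^\beta g$ precisely because $\gamma\le\beta$, contrary to your assertion that $\beta\ge\gamma$ is insufficient. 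The only place a polynomial factor $\vv^{\gamma+2}$ is traded against a sub-exponential weight is in the $\tr(\bar a^{\delta f}D_v^2\tau f)$ term, where it is folded into the weighted $L^\infty$ estimate for $D_v^2 f$; so your ``weight budget'' heuristic applies to that term only, and the budget for the other term closes by the structural cancellation at the maximum point, not by weights.
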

\begin{proof}
First, the assumption \eqref{e:f0-exp-upper} for $f_0$ implies, via Lemma \ref{l:sub-exp2}, the decay estimate
\[
\|e^{\rho_0\vv^\beta} f\|_{L^\infty([0,T]\times\R^6)} \leq K_0,
\]
for some $K_0$ depending only on $T$ and the initial data. Here, we may assume $T\leq 1$, since the proof only needs this bound on $f$ up to time $t_0\leq 1$.  Throughout the proof, we absorb dependence on this $K_0$ into constants.

Furthermore, because of our assumption that $ t \leq t_0 \leq \dfrac \rho {2\sigma}$, we have $e^{(\rho - \sigma t_0) \vv^\beta} \geq e^{(\rho/2)\vv^\beta}$ for any $v\in \R^3$. This will be used repeatedly.

For the remainder of this proof, all evaluations of $g$, $\tau f$, and $\delta f$ are assumed to be at $\zeta_0$, and all evaluations of $f$ are assumed to be at $z_0 = (t_0,x_0,v_0)$ unless otherwise noted. We also write $t$ instead of $t_0$, $x$ instead of $x_0$, etc. for equations evaluated at $\zeta_0$, to keep the calculations clean.

Since $\zeta_0$ is a local maximum point of $g$, we have
\begin{equation}\label{e:crossing}
\nabla_x g = \nabla_v g = \nabla_h g = \nabla_k h = 0, \quad D_v^2 g \leq 0, D_k^2 g \leq 0, \quad \text{ at $\zeta_0$.}
\end{equation}
Therefore, evaluating \eqref{e:g-eqn} at $\zeta_0$ results in
\begin{equation}\label{e:g-ineq-alpha}
 \partial_t g + \sigma \vv^\beta g \leq - \frac {2\alpha h\cdot k}{|h|^2+|k|^2} g  +   2 \frac{ e^{(\rho-\sigma t)\vv^\beta} \delta f(t,x,v)}{(|h|^2 + |k|^2)^{\alpha}} \left[\tr(\bar a^{\delta f} D_v^2 \tau f + \bar a^f D_v^2 \delta f) + \bar c^{\delta f} \tau f + \bar c^f \delta f\right]. 
\end{equation}
We want to bound the expression on the right by a power of $g(\zeta_0)$. 
First, we have the simple estimate 
\[
-\frac{2\alpha h\cdot k}{|h|^2+|k|^2} g \leq \alpha  g.
\]
%Looking more closely at the terms involving $\bar a^f$ and $\bar a^{\delta f}$, we note that
%\begin{equation}\label{e:dvg}
%\begin{split}
%\nabla_v g &= \left[\gamma (\rho-\sigma t)\vv^{\gamma-2} v (\delta f)^2 + 2\delta f \nabla_v(\delta f) \right] \frac {e^{(\rho-\sigma t)\vv^\gamma}}{(|h|^2 + |k|^2)^{\alpha/2}}%\\
%& = \left(\gamma \vv^{\gamma-2} v (\rho-\sigma t)  + 2 \frac{\nabla_v (\delta f)}{\delta f}\right) g.
%\end{split}
%\end{equation}
%Note that $\delta f \neq 0$, since otherwise $g = 0$, which cannot happen at a crossing point with $G$, which is strictly positive.
%and
Next, recalling that 
\[
0 = \nabla_v g = \left[\beta (\rho-\sigma t)\vv^{\beta-2} v |\delta f|^2 + 2\delta f \nabla_v(\delta f) \right] \frac {e^{(\rho-\sigma t)\vv^\beta}}{(|h|^2 + |k|^2)^{\alpha}},
\]
we see that
\begin{equation}\label{e:dvdf}
\nabla_v(\delta f) = -\frac 1 2 \beta (\rho-\sigma t) \vv^{\beta-2} v (\delta f).
\end{equation}
We also calculate
\[
\begin{split}
D_v^2 g &= \left[\beta \vv^{\beta-2}(\rho-\sigma t)(I +(\beta-2)v\otimes v \vv^{-2})(\delta f)^2 + \beta^2 (\rho-\sigma t)^2 \vv^{2\beta-4} v\otimes v (\delta f)^2 \right. \\
&\qquad \left. + 4\beta (\rho-\sigma t) \vv^{\beta-2} v \otimes \nabla_v(\delta f) \delta f  + 2\nabla_v(\delta f)\otimes \nabla_v(\delta f) + 2\delta f D_v^2 (\delta f) \right] \frac {e^{(\rho-\sigma t)\vv^\beta}}{(|h|^2 + |k|^2)^{\alpha}},
\end{split}
\]
which implies
\[
\begin{split}
2 \delta f \tr(\bar a^f D_v^2 \delta f) 
& =  \frac {(|h|^2+|k|^2)^{\alpha}} {e^{(\rho-\sigma t)\vv^\beta}}\tr(\bar a^f D_v^2 g) -\beta \vv^{\beta-2} (\rho-\sigma t) [\tr(\bar a^f) + (\beta-2) v\cdot (\bar a^f v) \vv^{-2}] (\delta f)^2 \\
&\quad - \beta^2 (\rho-\sigma t)^2 \vv^{2\beta-4} (\delta f)^2 v\cdot (\bar a^f v) - 4\beta (\rho-\sigma t) \vv^{\beta-2} \delta f  v\cdot (\bar a^f \nabla_v (\delta f))\\
&\quad  - 2 \nabla_v(\delta f)\cdot (\bar a^f \nabla_v(\delta f)).
\end{split}
\]
Since $\bar a^f\geq 0$ and $D_v^2 g\leq 0$ (recall \eqref{e:crossing}), several of the terms in the last expression have a good sign, and we are left with
\[
\begin{split}
2 \delta f \tr(\bar a^f D_v^2 \delta f) 
&\leq \beta (2-\beta) (\rho-\sigma t)\vv^{\beta-4} v\cdot (\bar a^f v) (\delta f)^2 - 4\beta(\rho-\sigma t) \vv^{\beta-2} \delta f v\cdot (\bar a^f\nabla_v(\delta f)).
\end{split} 
\]
For the second term in this right-hand side, we use \eqref{e:dvdf}, yielding
\[
\begin{split}
2 \delta f \tr(\bar a^f D_v^2 \delta f) 
&\leq \beta (2-\beta) (\rho-\sigma t)\vv^{\beta-4} v\cdot (\bar a^f v) (\delta f)^2 + 2\beta^2(\rho-\sigma t)^2 \vv^{2\beta-4} (\delta f)^2 v\cdot (\bar a^fv)\\
&= \left[\beta(2-\beta)(\rho-\sigma t)\vv^{\beta-4} + 2 \beta^2 (\rho-\sigma t)^2 \vv^{2\beta-4}\right] (\delta f)^2 v\cdot (\bar a^f v).
\end{split} 
\]
With Lemma \ref{l:abc}, this gives
\[
2 \delta f \tr(\bar a^f D_v^2 \delta f) \leq C\| f\|_{L^\infty_q([0,t_0]\times\R^6)} \vv^{\beta - 2 + \gamma} (1+\vv^\beta) (\delta f)^2,
\]
for some arbitrarily chosen $q>5+\gamma$, and a constant $C$ depending on $\beta$, $\rho$, and $\gamma$. Absorbing $\|f\|_{L^\infty_q([0,t_0]\times\R^6)} \leq C K_0$ into the constant, we then have
\[
2 \frac{ e^{(\rho-\sigma t)\vv^\beta} \delta f(t,x,v)}{(|h|^2 + |k|^2)^{\alpha}} \tr(\bar a^f D_v^2 \delta f)  \leq C \frac{e^{(\rho-\sigma t)\vv^\beta} } {(|h|^2 + |k|^2)^{\alpha}} \vv^{2\beta - 2 + \gamma} (\delta f)^2 \leq C \vv^{\gamma} g,
\]
since $\beta \leq 1$. 

Next, we address the term in \eqref{e:g-ineq-alpha} with $\tr(\bar a^{\delta f} D_v^2 \tau f)$. Noting that $\delta f = g^{1/2} (|h|^2+|k|^2)^{\alpha/2} e^{-(\rho-\sigma t)\vv^\beta/2}$, we have
\begin{equation}\label{e:a-delta}
\begin{split}
|\bar a^{\delta f}(\zeta_0)|
&\leq C \int_{\R^3} |w|^{\gamma+2} |\delta f(t_0,x_0,v_0-w,h_0,k_0)| \dd w \\
& = C \int_{\R^3} |w|^{\gamma+2} g^{1/2}(t_0,x_0,v_0-w,h_0,k_0) \frac{(|h_0|^2 + |k_0|^2)^{\alpha/2}}{e^{(\rho-\sigma t)\langle v_0-w\rangle^{\beta}/2} } \dd w\\
&\leq C\|g(t_0,\cdot)\|_{L^\infty(\R^6\times B_1^2)}^{1/2} (|h_0|^2 + |k_0|^2)^{\alpha/2} \int_{\R^3}\frac { |w|^{\gamma+2}} {e^{(\rho-\sigma t)\langle v_0-w\rangle^{\beta}/2} } \dd w,\\
&\leq Cg(\zeta_0)^{1/2} (|h_0|^2 + |k_0|^2)^{\alpha/2} \langle v_0\rangle^{\gamma+2},
\end{split}
\end{equation}
for a constant depending on $\rho$. We have used the fact that $g(\zeta_0)$ is the maximum value of $g(t_0,\cdot)$ over $\R^6\times B_1(0)^2$. This implies
\begin{equation}\label{e:a-delta-2}
\begin{split}
2 \frac{ e^{(\rho-\sigma t)\vv^\beta} \delta f}{(|h|^2 + |k|^2)^{\alpha}} \tr(\bar a^{\delta f} D_v^2 \tau f) 
&\leq 2C g(\zeta_0)^{1/2} \langle v_0 \rangle^{\gamma+2} |D_v^2 \tau f(\zeta_0)| \frac{ e^{(\rho-\sigma t)\vv^\beta} \delta f}{(|h_0|^2 + |k_0|^2)^{\alpha/2}}\\
&\leq C g(\zeta_0) \|\vv^{\gamma+2} e^{(\rho-\sigma t_0)\vv^\beta/2} D_v^2 f(t_0,\cdot)\|_{L^\infty(\R^6)}.
\end{split}
\end{equation}
To bound this second-order norm of $f$, we apply Proposition \ref{p:D2f-est} with $\alpha/2$ replacing $\alpha$. We can apply Proposition \ref{p:D2f-est} because of the lower bounds satisfied by $f_0$, which imply suitable lower ellipticity estimates for $\bar a^f$ via Theorem \ref{t:lowerbounds} and Lemma \ref{l:coercivity}. We therefore have
\[
\begin{split}
\|\vv^{\gamma+2} e^{(\rho - \sigma t_0)\vv^\beta/2} & D_v^2 f(t_0,\cdot)\|_{L^\infty(\R^6)}\\
&\leq  
\|e^{(\rho/2) \vv^\beta} D_v^2 f(t_0,\cdot)\|_{L^\infty(\R^6)}\\
& \leq 
Ct_0^{-1-\alpha^2/(24-2\alpha)}  \left(1+ \|e^{(3/\alpha - 1) \rho \vv^\beta} f\|_{C^{\alpha/2}_k([0,t_0]\times\R^6)}^{P(\alpha/2)}\right).
\end{split}
\]
Since the weight $e^{(3/\alpha - 1) \rho \vv^\beta}$ grows too fast for us to close our estimates, we interpolate via Lemma \ref{l:interp1}, then apply Proposition \ref{p:xv-to-t} to translate to a H\"older norm in $(x,v)$ variables: 
\[
\begin{split}
\|\vv^{\beta+2} &e^{(\rho - \sigma t_0)\vv^\beta/2}  D_v^2 f(t_0,\cdot)\|_{L^\infty(\R^6)}\\
&\leq C t_0^{-1-\alpha^2/(24-2\alpha)} \left(1 + \| e^{(\rho/4) \vv^\beta} f\|_{C^{\alpha}_{k}([0,t_0]\times\R^6)}^{P(\alpha/2)/2} \|e^{(6/\alpha - 9/4)\rho \vv^\beta} f\|_{L^\infty([0,t_0]\times\R^6)}^{P(\alpha/2)/2}\right)\\
&\leq Ct_0^{-1-\alpha^2/(24-2\alpha)} \left(1 + \| e^{(\rho/2) \vv^\beta} f\|_{L^\infty([0,t_0],C^{\alpha}_{k,x,v}(\R^6)}^{P(\alpha/2)/2} \right),
\end{split}
\]
where we absorbed $\|e^{(6/\alpha - 9/4)\rho \vv^\beta} f\|_{L^\infty} \leq K_0$ into the constant, since $(6/\alpha - 9/4) \rho = \rho_0$.  
From \eqref{e:kin-Euc} and Lemma \ref{l:g-controls-H}, we have 
\[
\begin{split}
\|e^{(\rho/2) \vv^\beta} f\|_{L^\infty([0,t_0],C^\alpha_{k,x,v}(\R^6))} 
&\leq 
\|e^{(\rho/2) \vv^\beta} f\|_{L^\infty([0,t_0],C^\alpha_{E,x,v}(\R^6))} \\
%&\leq 
%\|e^{(\rho-\sigma t) \vv^\beta} f\|_{L^\infty([0,t_0],C^\alpha_{E,x,v}(\R^6))} \\
&\leq 
g(\zeta_0)^{1/2},
\end{split}
\]
since $\zeta_0$ is the location of the maximum value of $g$ over $[0,t_0]\times \R^6\times (B_1(0))^2$. Returning to \eqref{e:a-delta-2}, we have shown
\[
2 \frac{ e^{(\rho-\sigma t)\vv^\beta} \delta f}{(|h|^2 + |k|^2)^{\alpha}} \tr(\bar a^{\delta f} D_v^2 \tau f) 
\leq
C t_0^{-1-\alpha^2/(6-\alpha)}(g(\zeta_0) + g(\zeta_0)^{1+P(\alpha)/4}).
\]

We now address the zeroth-order terms in \eqref{e:g-ineq-alpha}. First, with Lemma \ref{l:abc}, 
\[
2 \frac{ e^{(\rho-\sigma t)\vv^\beta} \delta f}{(|h|^2 + |k|^2)^{\alpha}} \bar c^f \delta f = 2 \bar c^f g \leq C \|f\|_{L^\infty_q([0,t_0]\times\R^6)} \vv^\gamma g \leq C \langle v_0 \rangle^\gamma g,
\]
where $q>5+\gamma$ is arbitrary, and $\|f\|_{L^\infty_q} \leq CK_0$ is absorbed into the constant as above.  Next, proceeding in a similar way to \eqref{e:a-delta}, we have
\[
\begin{split}
|\bar c^{\delta f}(\zeta_0)| &\leq C \int_{\R^3} |w|^\gamma |\delta f(t_0,x_0,v_0-w,h_0,k_0)| \dd w\\
&= C \int_{\R^3} |w|^\gamma \frac{(|h_0|^2 + |k_0|^2)^{\alpha/2}}{e^{(\rho-\sigma t)\langle v_0-w\rangle^{\beta}/2} } \dd w\\
&\leq C g(\zeta_0)^{1/2} (|h_0|^2 + |k_0|^2)^{\alpha/2} \langle v_0\rangle^{\gamma},
\end{split}
\]
and
\[
\begin{split}
2 \frac{ e^{(\rho-\sigma t)\vv^\beta} \delta f}{(|h|^2 + |k|^2)^{\alpha}} \bar c^{\delta f} \tau f &\leq 2C g(\zeta_0)^{1/2} \langle v_0 \rangle^{\gamma} \tau f(\zeta_0) \frac{ e^{(\rho-\sigma t)\vv^\beta} \delta f}{(|h_0|^2 + |k_0|^2)^{\alpha/2}}\\
&\leq C g(\zeta_0) \|\vv^\gamma e^{(\rho-\sigma t)\vv^\beta/2} \tau f\|_{L^\infty([0,T]\times\R^6)}\\
&\leq C g(\zeta_0) \|e^{\rho \vv^\beta/2} f\|_{L^\infty} \leq C g(\zeta_0),
\end{split}
\]
since $\rho/2 \leq \rho_0$. Overall, we have shown that, at the point $\zeta_0$,
\[
\partial_t g + \sigma \vv^\beta g \leq C \vv^\gamma g + Ct_0^{-1-\alpha^2/(24-2\alpha)}(g + g^{1+ P(\alpha/2)/2}),
\]
and since $\gamma \leq \beta$, the proof is complete after choosing $\sigma = C$. 
\end{proof}

We are now ready to prove the main result of this section:

\begin{proof}[Proof of Theorem \ref{t:holder-prop}]
First, we may assume $f$ is smooth and decaying exponentially as $|v|\to \infty$, by passing to the approximating sequence $f^\eps$ from the proof of Theorem \ref{t:existence}. These qualitative properties are used only to get a first crossing point in our barrier argument, so they do not quantitatively affect the estimate we are proving. Therefore, the estimate is preserved in the limit as $\eps\to 0$, since $f^\eps\to f$ pointwise. For simplicity, we write $f$ instead of $f^\eps$ in this proof.

With $g$ defined as in \eqref{e:g} with $\rho$ as in the statement of the theorem, and $\sigma$ as in Lemma \ref{l:crossing-bounds}, we note that
\[
\|g(0,\cdot)\|_{L^\infty(\R^6\times B_1(0)^2)} < \infty,
\]
 %claim that
%\[
%g(t,x,v,h,k) \leq 2\|g(0,\cdot)\|_{L^\infty(\R^6\times B_1(0)^2)}, \quad \text{ if $0 \leq t \leq T_H$},
%\]
%for a $T_H>0$ as in the statement of the theorem (REPHRASE--NOT QUITE $T_H$).  This right-hand side $\|g(0,\cdot)\|_{L^\infty}$ is finite 
as a result of \eqref{e:kin-Euc} and our assumption \eqref{e:Holder-assumption} on $f_0$. 

We want to show that $g$ is bounded on some positive time interval. 
To show this, define the barrier $G(t)$ as the solution to the initial value problem
\begin{equation}\label{e:G-ODE}
\begin{cases} 
\partial_t G = 2C_0 (G + t^{-1+\mu(\alpha)} G^{1+\nu(\alpha)}), \\
G(0) = 1 + \|g(0,\cdot)\|_{L^\infty(\R^6\times B_1(0))} + 4\|e^{(\rho/2)\vv^\beta} f\|_{L^\infty([0,T]\times\R^6)}^2,
\end{cases}
\end{equation}
where $C_0>0$, $\mu(\alpha) \in (0,1)$, and $\nu(\alpha)>0$ are the constants from Lemma \ref{l:crossing-bounds}. The norm $\|e^{(\rho/2)\vv^\beta} f\|_{L^\infty}$ is finite because of \eqref{e:f0-exp-upper} and Lemma \ref{l:sub-exp2}.

The solution $G(t)$ to \eqref{e:G-ODE} exists on a time interval $[0,T_G]$, with $T_G$ depending on $\alpha$, $C_0$, and $G(0)$. 
Let $T^* = \min\{T_G,T,\rho/(2\sigma)\}$. We want to show that $g(t,x,v,h,k) < G(t)$ whenever $t\in [0,T^*]$. If this is false, then there must be a point $\zeta_0 = (t_0,x_0,v_0,h_0,k_0)\in [0,T^*]\times\R^6\times\overline{B_1(0)}^2$, with $t_0>0$, where $g$ and $G$ cross for the first time. The existence of this point follows in  a standard way from the compactness of the domain in the $(t,x,h,k)$ variables (recall that $f$ and $g$ are periodic in $x$), as well as the decay of $g$ for large $|v|$ (which follows by the qualitative rapid decay and smoothness of $f$). 

Next, we point out that the crossing point $\zeta_0$ occurs with both $h_0$ and $k_0$ in the interior of $B_1(0)$, since if $|h_0|^2 + |k_0|^2 \geq 1$, the definition \eqref{e:g} of $g$ would imply, since $t_0\leq T^* \leq \rho/(2\sigma)$, 
\[
G(t_0) = g(\zeta_0) \leq e^{(\rho - \sigma t_0)\langle v_0\rangle^\beta} (\delta f)^2(\zeta_0) \leq 4 \|e^{(\rho/2)\langle v_0\rangle^\beta} f\|_{L^\infty([0,T]\times\R^6)}^2 \leq G(0),
\]
which is impossible since $G(t)$ is strictly increasing. 

Since $\zeta_0$ is the first crossing point between $g$ and $G$, we have
\[
\partial_t G(t_0) \leq \partial_t g(\zeta_0),
\]
and because $G$ is independent of $(x,v,h,k)$, the point $(x_0, v_0, h_0, k_0)$ is a maximum point for $g(t_0,\cdot)$, and we can apply Lemma \ref{l:crossing-bounds}:
\[
\partial_t G(t_0) \leq C_0 \left( g(\zeta_0) + t_0^{-1+\mu(\alpha)} g(\zeta_0)^{1+\nu(\alpha)}\right) = C_0 \left( G(t_0) + t_0^{-1+\mu(\alpha)} G(t_0)^{1+\nu(\alpha)}\right)< \partial_t G(t_0),
\]
by \eqref{e:G-ODE}. This contradiction implies $g(t,x,v,h,k) < G(t)$ whenever $t\in [0,T^*]$. There is a time $T_2\in (0, T_G)$ depending on $\alpha$, $C_0$, and $G(0)$, such that $G(t_2) = 2G(0)$. Define
\[
T_H = \min\left\{T_2, \frac \rho {2\sigma}\right\},
\]
and $T^{**} = \min\{T, T_H\}$. 
% There exists a $T_U>0$ as in the statement of the theorem, such that $G(t) \leq 2G(0)$ for $t\in [0,T_U]$.   
With Lemma \ref{l:g-controls-H} and Lemma \ref{l:sub-exp2}, we have
\[
\begin{split}
\sup_{0\leq t\leq T^{**}} [e^{(\rho/2)\vv^\beta} f(t)]_{C^\alpha_{E,x,v}(\R^6)}^2
& \leq 
2 G(0) \\
&\leq C \left(\|g(0,\cdot)\|_{L^\infty(\R^6\times B_1(0))} + \|e^{(\rho/2)\vv^\beta} f\|_{L^\infty([0,T^{**}]\times\R^6)}^2\right)\\
&\leq C \left(\|e^{\rho\vv^\beta} f_0\|_{C^\alpha_{E,x,v}(\R^6)}^2 + \|e^{(\rho/2)\vv^\beta} f_0\|_{L^\infty(\R^6)}^2\right).
\end{split}
\]
Next, we translate this inequality to kinetic H\"older norms with \eqref{e:kin-Euc}:
\[
\sup_{0\leq t\leq T^{**}} [e^{(\rho/2)\vv^\beta} f(t)]_{C^\alpha_{k,x,v}(\R^6)}^2 \leq C \|e^{\rho\vv^\beta} f_0\|_{C^{3\alpha}_{k,x,v}(\R^6)}.
\]
Finally, we apply Proposition \ref{p:xv-to-t} to obtain
\[
\| e^{(\rho/4)\vv^\beta} f(t)\|_{C^\alpha_{k,x,v}([0,T^{**}]\times\R^6)}^2 \leq C \|e^{\rho\vv^\beta} f_0\|_{C^{3\alpha}_{k,x,v}(\R^6)}.
\]
as desired. The weight on the left side has been changed to $e^{(\rho/4)\vv^\beta}$ to absorb the polynomial moments lost when applying Proposition \ref{p:xv-to-t}. 

Finally, we recall that $T_H$ depends on $\rho$, $\sigma$, $\alpha$, $C_0$, and $G(0)$. Since $G(0)$ is bounded in terms of $[e^{\rho\vv^\beta} f_0]_{C^{3\alpha}_k(\R^6)}$ and $\|e^{\rho_0 \vv^\beta} f_0\|_{L^\infty(\R^6)}$, the proof is complete.
\end{proof}

\section{Uniqueness}\label{s:uniqueness}

This section establishes the uniqueness of our solutions. First, let us prove an auxiliary lemma:
\begin{lemma}\label{l:ch-bound}
Let $\phi(v) = e^{\rho \vv^\beta}$, with $\beta$ as in Theorem \ref{t:uniqueness}. For any $\mu>0$, there holds
\[
\left\|\frac{h\ast |\cdot|^\mu}{\sqrt \phi}\right\|_{L^2_v(\R^3)} \leq C \|\sqrt \phi h\|_{L^2_v(\R^3)},
\]
whenever $h$ is such that the right-hand side is finite. Here, $C$ is a constant depending only on $\rho$, $\beta$, and $\mu$. 
%Let $\phi(t,v) = e^{(\rho-\sigma t) \vv^\gamma}$. For any $t$ such that $\rho-\sigma t \geq \rho/2$, and any $x\in \R^3$, there holds
%\[
%\left\|\frac{\bar c^h(t,x,\cdot)}{\sqrt \phi(t,\cdot)}\right\|_{L^2_v(\R^3)} \leq C \|\sqrt \phi(t,\cdot) h(t,x,\cdot)\|_{L^2_v(\R^3)},
%\]
%whenever $h$ is such that the right-hand side is finite. Here, $C$ is a constant depending only on $\rho$ and $\gamma$. 
\end{lemma}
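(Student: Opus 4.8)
The plan is to recast the claimed inequality as an $L^2_v$-boundedness statement for a symmetric positive integral operator and then invoke Schur's test. Setting $H := \sqrt{\phi}\, h$, so that $h(u) = e^{-\frac{\rho}{2}\langle u\rangle^\beta} H(u)$, and writing $(h\ast|\cdot|^\mu)(v) = \int_{\R^3} h(u)\,|v-u|^\mu\dd u$, a direct computation gives
\[
\frac{(h\ast|\cdot|^\mu)(v)}{\sqrt{\phi(v)}} = \int_{\R^3} K(v,u)\, H(u)\dd u, \qquad K(v,u) := |v-u|^\mu\, e^{-\frac{\rho}{2}\left(\langle v\rangle^\beta + \langle u\rangle^\beta\right)}.
\]
The kernel $K$ is nonnegative and symmetric in $(v,u)$, so Schur's test reduces the lemma to the single scalar estimate
\[
\sup_{v\in\R^3} \int_{\R^3} K(v,u)\dd u = \sup_{v\in\R^3}\; e^{-\frac{\rho}{2}\langle v\rangle^\beta}\int_{\R^3} |v-u|^\mu\, e^{-\frac{\rho}{2}\langle u\rangle^\beta}\dd u \;\leq\; C,
\]
with $C$ depending only on $\rho$, $\beta$, $\mu$; the desired inequality then follows with the same $C$, and the right-hand side being finite guarantees $H\in L^2_v$, which is all that is needed to make the computation rigorous.

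To prove the scalar estimate I would fix $v$ and split the $u$-integral into the regions $\{|u|\le 2|v|\}$ and $\{|u|>2|v|\}$. On the first region $|v-u|\le 3|v|$, hence $|v-u|^\mu\le 3^\mu|v|^\mu$; since $\beta>0$ the quantity $e^{-\frac{\rho}{2}\langle v\rangle^\beta}|v|^\mu$ is bounded uniformly in $v$ by a constant depending on $\rho,\beta,\mu$, and together with $\int_{\R^3} e^{-\frac{\rho}{2}\langle u\rangle^\beta}\dd u<\infty$ this contributes a bounded amount. On the second region $|v-u|\le |v|+|u| < \tfrac32|u|$, so $|v-u|^\mu\le(\tfrac32)^\mu|u|^\mu$; discarding the prefactor via $e^{-\frac{\rho}{2}\langle v\rangle^\beta}\le 1$ and using $\int_{\R^3}|u|^\mu e^{-\frac{\rho}{2}\langle u\rangle^\beta}\dd u<\infty$ (again valid since $\beta>0$) gives another bounded contribution. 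Adding the two bounds establishes the Schur estimate.

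I do not anticipate a serious obstacle; the only point requiring attention is the choice of cut-off radius $2|v|$ between the two regions, which is arranged precisely so that in the inner region the sub-exponential prefactor $e^{-\frac{\rho}{2}\langle v\rangle^\beta}$ absorbs the polynomial growth $|v-u|^\mu\lesssim|v|^\mu$, while in the outer region the integrand $|v-u|^\mu e^{-\frac{\rho}{2}\langle u\rangle^\beta}\lesssim |u|^\mu e^{-\frac{\rho}{2}\langle u\rangle^\beta}$ is already absolutely integrable on its own. Note that only $\beta>0$ is used (to make $e^{-c\langle u\rangle^\beta}$ integrable against polynomials and to bound $e^{-c\langle v\rangle^\beta}|v|^\mu$); the restriction $\beta\le 1$ plays no role in this lemma.
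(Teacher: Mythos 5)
Your argument is correct: the substitution $H=\sqrt{\phi}\,h$ does turn the claim into $L^2$-boundedness of the integral operator with the nonnegative symmetric kernel $K(v,u)=|v-u|^\mu e^{-\frac{\rho}{2}(\langle v\rangle^\beta+\langle u\rangle^\beta)}$, Schur's test then only requires the single row-sum bound by symmetry, and your two-region splitting at $|u|=2|v|$ verifies that bound using exactly the two facts you isolate ($|v|^\mu e^{-c\langle v\rangle^\beta}$ bounded and $|u|^\mu e^{-c\langle u\rangle^\beta}$ integrable, both needing only $\beta>0$). The paper reaches the same conclusion by a more direct, non-symmetrized route: it applies Cauchy--Schwarz in $w$ to $\int \frac{|v-w|^\mu}{\sqrt{\phi(w)}}\,\sqrt{\phi(w)}h(w)\dd w$, uses the crude kernel bound $\bigl\||v-\cdot|^\mu\phi^{-1/2}\bigr\|_{L^2_w}^2\leq C\vv^{2\mu}$, and then closes by integrating $\vv^{2\mu}/\phi(v)$ in $v$; the outer weight $\phi(v)^{-1}$ is only used at the last step, so no case analysis is needed. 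Your version distributes the weight symmetrically and invokes a standard operator-theoretic criterion, which makes the self-adjoint structure visible and avoids the $L^2_w$ kernel computation at the cost of the region splitting; the paper's version is slightly shorter and stays entirely within one application of Cauchy--Schwarz. Both proofs yield the same constant dependence on $\rho$, $\beta$, $\mu$, and your closing observation that $\beta\leq 1$ is irrelevant here is consistent with the paper's argument as well.
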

\begin{proof}
%We omit the variables $t$ and $x$ from this proof because they play no role in the calculation. 
From H\"older's inequality, we have
\[
\begin{split}
\int_{\R^3} \frac 1 {\phi} (h\ast |\cdot|^\mu)^2(v) \dd v &= \int_{\R^3} \frac 1 \phi \left(\int_{\R^3} |v-w|^\mu h(w) \dd w\right)^2 \dd v\\
&= \int_{\R^3} \frac 1 \phi \left(\int_{\R^3} \frac{|v-w|^\mu}{\sqrt \phi(w)} \sqrt \phi(w) h(w) \dd w\right)^2 \dd v\\
&\leq C \int_{\R^3} \frac 1 {\phi(v)} \left\| \frac{|v-w|^\mu}{\sqrt \phi(w)}\right\|^2_{L^2_w(\R^3)} \|\sqrt \phi h\|_{L^2(\R^3)}^2 \dd v\\
&\leq C\|\sqrt\phi h\|_{L^2(\R^3)}^2 \int_{\R^3} \frac 1 {\phi(v)} \vv^{2\mu} \dd v \leq C\|\sqrt\phi h\|_{L^2(\R^3)}^2 ,
\end{split}
\]
with $C$ as in the statement of the lemma.
\end{proof}

Now we are ready to prove our main uniqueness result:

%(EXTRA QUALITATIVE ASSUMPTIONS ON $g$ NEEDED? ONLY WHAT IS REQUIRED TO APPLY PROPAGATION OF DECAY)
%By our lower bound assumption \eqref{e:stronger-lower-bound}, combined with [Lemma 4.3 in dissertation proposal] and Lemma \ref{l:coercivity}, the matrices $\bar a^f$ and $\bar a^g$ both satisfy lower ellipticity estimates of the form
%\begin{equation}\label{e:lambda0}
%a_{ij}(t,x,v) \xi_i \xi_j \geq \lambda_0 |\xi|^2 
%\begin{cases}
%\vv^\gamma, & \xi \parallel v,\\
%\vv^{\gamma+2}, & \xi \perp v.
%\end{cases}
%\end{equation}

\begin{proof}[Proof of Theorem \ref{t:uniqueness}]

The difference $h = f-g$ satisfies the equation
\begin{equation}\label{e:h}
\partial_t h + v\cdot \nabla_x h = Q(f,f) - Q(g,g) = Q(h,f) + Q(g,h).
\end{equation}
%We intend to use the following form of the collision operator:
%\[
%Q(f,f) = \nabla_v \cdot (\bar a^f \nabla_v f + \bar b^f f).
%\]
Let $\rho \in (0,\rho_0)$ and $\sigma>0$ be constants to be determined later, and define the weight $\phi(t,v) = e^{(\rho-\sigma t) \vv^\beta}$, with $\beta$ as in the statement of the theorem.  We assume throughout the proof that $0 \leq t \leq T_U = \min\{\rho/(2\sigma), T_H\}$, where $T_H$ is the constant from Theorem \ref{t:holder-prop}. In particular, this implies that $\phi \leq e^{(\rho/2)\vv^\beta}$.

As a result of Theorem \ref{t:existence}, we know that $f$ satisfies a uniform decay estimate of the form
\[
e^{\rho \vv^\beta} f(t,x,v) \leq K_0,
\]
for some $K_0, \rho>0$. By assumption, the solution $g$ satisfies
\[
\int_{\R^3} (1+|v|^{\gamma+2}) g(t,x,v) \dd v \leq L_0,
\]
for some $L_0>0$. This is enough to bound the coefficients $\bar a^g$, $\bar b^g$, and $\bar c^g$ from above via Lemma \ref{l:abc}.  In this proof, we absorb $K_0$ and $L_0$ into constants without comment.

Multiplying \eqref{e:h} by $h\phi$ and integrating over $\T^3\times\R^3$, the term $\iint \phi h v\cdot \nabla_x h \dd v \dd x$ vanishes since $\phi$ is independent of $x$, yielding
\begin{equation}\label{e:energy}
\begin{split}
\frac 1 2 \frac d {dt}\|\sqrt\phi h\|_{L^2}^2  + \frac \sigma 2 \int_{\T^3}\int_{\R^3} \vv^\gamma \phi h^2 \dd v \dd x &= \int_{\T^3}\int_{\R^3} \phi h [Q(h,f) + Q(g,h)] \dd v \dd x\\
%& = - (\rho - \sigma t)\int_{\T^3} \int_{\R^3}\gamma \vv^{\gamma-2} \phi v\cdot (\bar a^h \nabla_v f + \bar b^h f + \bar a^g \nabla_v h + \bar b^g h) \dd v \dd x
%&= \int_{\T^3}\int_{\R^3} \phi h [\tr(\bar a^h D_v^2 f) + \bar c^h f + \tr(\bar a^g D_v^2 h) + \bar c^g h] \dd v \dd x.
\end{split}
\end{equation}
Looking at the terms on the right, we start with $\iint \phi h Q(h,f) \dd v\dd x$. For this term, we use the non-divergence form of the collision operator:
\[
\int_{\T^3}\int_{\R^3} \phi h Q(h,f) \dd v \dd x = \int_{\T^3} \int_{\R^3} \phi h \tr(\bar a^h D_v^2 f)\dd v \dd x  + \int_{\T^3}\int_{\R^3}\phi h\bar c^h f \dd v \dd x = I_1 + I_2.
\]
For $I_1$, we first bound $D_v^2 f$ by combining Proposition \ref{p:D2f-est} and Theorem \ref{t:holder-prop} with $\alpha/3$ replacing $\alpha$:
\[
\begin{split}
\|(\phi D_v^2 f)(t) \|_{L^\infty(\T^3\times\R^6)} 
&\leq 
C \|e^{(\rho/2)\vv^\beta} D_v^2 f(t)\|_{L^\infty(\T^3\times\R^6)}  \\
&\leq 
C(1+t^{\kappa(\alpha)}) \|e^{\rho' \vv^\beta} f\|_{C^{\alpha/3}_k([0,t]\times\R^6)} \\
&\leq   C(1+t^{\kappa(\alpha)}), 
\end{split}
\]
where $\kappa(\alpha) = -1 + (\alpha/3)^2/(6-\alpha/3)$ and $\rho' = (9/\alpha - 1) \rho$.  From Theorem \ref{t:holder-prop}, we see that the constant $C$ depends on $\|e^{\rho'' \vv^\beta} f_0\|_{L^\infty(\R^6)}$, where $\rho'' = (24-9\alpha)/(\alpha) \rho'$. We choose the parameter $\rho$ in the definition of our weight $\phi$ by setting $\rho'' = \rho_0$ and solving for $\rho$. 

Returning to the term $I_1$, we bound $\bar a^h$ using Lemma \ref{l:ch-bound} with $\mu = \gamma + 2$, giving
\[
\begin{split}
\int_{\T^3}\int_{\R^3} \phi  h \tr( \bar a^h D_v^2 f) \dd v \dd x &\leq \|(\phi D_v^2 f)(t) \|_{L^\infty(\T^3\times \R^6)} \int_{\T^3} \int_{\R^3} \sqrt \phi h \frac{|\bar a^h|}{\sqrt \phi} \dd v \dd x\\
&\leq C(1 + t^{\kappa(\alpha)}) \int_{\T^3} \|\sqrt \phi h\|_{L^2_v(\R^3)} \left\|\frac{|\bar a^h|}{\sqrt \phi}\right\|_{L^2_v(\R^3)} \dd x\\
&\leq C(1 + t^{\kappa(\alpha)}) \int_{\T^3} \|\sqrt \phi h\|_{L^2_v(\R^3)}^2 \dd x\\
&\leq C(1 + t^{\kappa(\alpha)})\|\sqrt \phi h\|_{L^2(\T^3\times\R^3)}^2.
\end{split}
\]
For $I_2$, we have
\[
\begin{split}
\int_{\T^3}\int_{\R^3} \phi h \bar c^h f \dd v \dd x &\leq \|(\phi f)(t)\|_{L^\infty(\T^3\times\R^3)}\int_{\T^3} \int_{\R^3} \sqrt \phi h \frac{\bar c^h}{\sqrt \phi} \dd v \dd x\\
&\leq \|(\phi f)(t)\|_{L^\infty(\T^3\times\R^3)}\int_{\T^3} \|\sqrt \phi h\|_{L^2_v(\R^3)} \left\|\frac{\bar c^h}{\sqrt \phi}\right\|_{L^2_v(\R^3)} \dd x\\
&\leq C\|\phi f\|_{L^\infty} \int_{\T^3} \|\sqrt \phi h\|_{L^2_v(\R^3)}^2 \dd x = C \|\phi f\|_{L^\infty}  \|\sqrt \phi h\|_{L^2}^2,
\end{split}
\]
by Lemma \ref{l:ch-bound} with $\mu = \gamma$.

Next, we address the term $\iint \phi h Q(g,h) \dd v \dd x$ in \eqref{e:energy}. Here, we use the divergence form of the collision operator:
\[
\int_{\T^3}\int_{\R^3} \phi h Q(g,h) \dd v \dd x = \int_{\T^3} \int_{\R^3} \phi h [\nabla_v\cdot (\bar a^g \nabla_v h) + \bar b^g h +   \bar c^g h] \dd v \dd x = J_1 + J_2 + J_3.
\]
%
%
%\[
%\begin{split}
%\int_{\T^3}\int_{\R^3} \phi h \tr(\bar a^g D_v^2 h) \dd v \dd x &= -\int_{\T^3}\int_{\R^3} \phi \nabla_v h\cdot (\bar a^g \nabla_v h) \dd v \dd x - \int_{\T^3}\int_{\R^3} \phi h \bar b^g \cdot \nabla_v h \dd v \dd x\\
%%&\quad - \gamma (\rho - \sigma t)\int_{\T^3}\int_{\R^3} \phi h \vv^{\gamma-2} v \cdot (\bar a^g \nabla_v h) \dd v \dd x
%&\quad -\int_{\T^3}\int_{\R^3} h \nabla_v \phi \cdot (\bar a^g \nabla_v h) \dd v \dd x
%\end{split}
%\]
%(CHECK SIGNS OF $\bar b^f$ and $\bar c^g$. CHANGE TO DIVERGENCE FORM FOR Q(g,h).) 
For $J_1$, we integrate by parts in $v$:
\[
J_1 =  - \int_{\T^3}\int_{\R^3} [\phi \nabla_v h + h \nabla_v \phi] \cdot (\bar a^g \nabla_v h) \dd v \dd x.
\]
Since $\bar a^g$ is a postive-definite matrix, we have $(\nabla_v h + \eps \nabla_v\phi)\cdot [\bar a^g (\nabla h + \eps \nabla_v \phi)] \geq 0$ for any $\eps>0$, which implies the following inequality:
\[
-\nabla_v \phi\cdot (\bar a^g \nabla_v h) \leq \frac \eps 2 \nabla_v \phi\cdot (\bar a^g \nabla_v \phi) + \frac 1 {2\eps} \nabla_v h\cdot (\bar a^g \nabla_v h).
\]
Choosing $\eps = h/(2\phi)$, this gives
\begin{equation}\label{e:good-term}
\begin{split}
-\int_{\T^3}\int_{\R^3} h \nabla_v \phi \cdot (\bar a^g \nabla_v h) \dd v \dd x &\leq  \int_{\T^3}\int_{\R^3} \phi \nabla_v h \cdot (\bar a^g \nabla_v h) \dd v \dd x + \frac 1 4 \int_{\T^3}\int_{\R^3}   h^2 \frac{\nabla_v \phi}{\phi}\cdot(\bar a^g \nabla_v \phi) \dd v \dd x,
\end{split}
\end{equation}
and
\[
J_1 \leq  \frac 1 4 \int_{\T^3}\int_{\R^3}   h^2 \frac{\nabla_v \phi}{\phi}\cdot(\bar a^g \nabla_v \phi) \dd v \dd x.
\]
From the upper bound for $\bar a^g$ in Lemma \ref{l:abc}, we have
\[
\begin{split}
J_1 &\leq  \frac {\beta^2} 4 \int_{\T^3} \int_{\R^3} h^2 \phi \vv^{2\beta-4} v\cdot (\bar a^g v) \dd v \dd x \\
& \leq  C L_0  \int_{\T^3} \int_{\R^3} h^2 \phi \vv^{2\beta + \gamma-2} \dd v \dd x \leq C \|\sqrt \phi h\|_{L^2_{\gamma/2}(\T^3\times\R^3)}^2.
\end{split}
\]
For $J_2$, the growth of $\bar b^g\approx \vv^{\gamma+1}$ presents a difficulty, since the good term on the left side of \eqref{e:energy} only allows us to absorb a weight like $\vv^\beta \phi$. To get around this, we integrate by parts repeatedly and use the facts that $\bar b^g_i = -\sum_{j=1}^3 \partial_{v_j}\bar a^g_{ij}$ and $\nabla_v\cdot \bar b^g = \bar c^g$:
\[
\begin{split}
J_2
 &= 
 \frac 1 2 \int_{\T^3}\int_{\R^3} \phi  \bar b^g \cdot \nabla_v (h^2) \dd v \dd x\\
&= 
\frac 1 2 \int_{\T^3} \int_{\R^3} h^2  \left(\sum_{j=1}^3 \partial_{v_j}\bar a^g_{ij}\right) \cdot \nabla_v \phi\dd v \dd x - \frac 1 2 \int_{\T^3}\int_{\R^3} \phi \bar c^f h^2 \dd v \dd x \\
&= 
- \int_{\T^3} \int_{\R^3} h \nabla_v \phi \cdot (\bar a^g \nabla_v h) \dd v \dd x - \frac 1 2 \int_{\T^3}\int_{\R^3} h^2 \tr(\bar a^g D_v^2 \phi) \dd v \dd x - \frac 1 2 \int_{\T^3}\int_{\R^3} \phi \bar c^g h^2 \dd v \dd x.
\end{split}
\]
In this right-hand side, the first term is equal to the term handled above in \eqref{e:good-term}, and we estimate it in the same way. The third term is equal to $-\frac 1 2 J_3$. For the middle term, we use the expression \eqref{e:a-phi} (with $\rho - \sigma t$ replacing $\rho$) for $\tr(\bar a^g D_v^2 \phi)$ and discard negative terms to obtain
\[
\begin{split}
- \frac 1 2 &\int_{\T^3}\int_{\R^3} h^2 \tr(\bar a^g D_v^2 \phi) \dd v \dd x \\
&= 
- \frac {(\rho-\sigma t) \beta } 2 \int_{\T^3}\int_{\R^3} h^2 \phi \vv^{\beta-4}[ \left( (\beta-2) +(\rho-\sigma t) \beta \vv^\beta\right) v\cdot (\bar a^g v) + \vv^2 \tr(\bar a^g)] \dd v \dd x\\
&\leq 
\frac {(\rho-\sigma t) \beta (\beta-2)} 2 \int_{\T^3}\int_{\R^3} h^2 \phi \vv^{\beta-4}v\cdot (\bar a^g v) \dd v \dd x\\
&\leq C L_0 \int_{\T^3}\int_{\R^3} \vv^{\beta +\gamma - 2} h^2 \phi \dd v \dd x = C \|\sqrt \phi h\|_{L^2(\T^3\times\R^3)}^2,
\end{split}
\]
after using Lemma \ref{l:abc}. 
% we use Lemma \ref{l:coeff-Linfty} and Young's inequality:
%\[
%\begin{split}
%- \int_{\T^3}\int_{\R^3} \phi h \bar b^g \cdot \nabla_v h \dd v \dd x &\leq C_g \int_{\T^3}\int_{\R^3} \vv^{\gamma+1}\phi  h |\nabla_v h| \dd v \dd x\\
%&\leq \frac{C_g} 2 \int_{\T^3}\int_{\R^3}\phi \left( \eps  \vv^{\gamma+2}|\nabla_v h|^2 + \frac 1 \eps \vv^\gamma h^2\right) \dd v \dd x,
%\end{split}
%\]
%for any $\eps>0$. Choosing $\eps$...
%
For $J_3$, Lemma \ref{l:abc} implies
\[
\int_{\T^3}\int_{\R^3} \phi h^2 \bar c^g \dd v \dd x \leq C L_0 \int_{\T^3} \int_{\R^3} \vv^\gamma \phi h^2 \dd v \dd x \leq C \|\sqrt \phi h\|_{L^2_{\gamma/2}(\T^3\times\R^3) }^2.
\]

Putting everything together, we have
\[
\frac 1 2 \|\sqrt\phi h\|_{L^2}^2  + \frac \sigma 2 \int_{\T^3}\int_{\R^3} \vv^\beta \phi h^2 \dd v \dd x \leq C \left( 1 + t_0^{\kappa(\alpha)}\right) \|\sqrt \phi h\|_{L^2(\T^3\times\R^3)}^2 + C \|\sqrt \phi h\|_{L^2_{\gamma/2}(\T^3\times\R^3)}^2.
\]
The constant in this inequality depends only on $\alpha$, $\rho_0$, $K_0$, and $L_0$. %and $\|e^{\rho_0 \vv^\beta} f_0\|_{C^\alpha_{k,x,v}(\R^6)}$. 
Choosing $\sigma = C$ and applying Gronwall's inequality, we obtain (since $\beta\geq \gamma$)
\[
\|\sqrt \phi(t) h(t)\|_{L^2(\T^3\times\R^3)} \leq \|\sqrt \phi(0) h(0)\|_{L^2(\T^3\times\R^3)} \exp(C(t_0 + t_0^{1+ \kappa(\alpha)}) = 0,
\]
so that $h(t,x,v) = 0$ for all $t$ and almost every $(x,v)$. By continuity, $h(t,x,v) = 0$ everywhere, and we conclude $f=g$ pointwise. Since $T_U$ depends on $\rho$, $\sigma$, and $T_H$, we conclude the proof.
\end{proof}
\appendix

\section{Regularity theory}\label{s:a}

\subsection{Change of variables}

When applying regularity estimates for the Landau equation, the ellipticity of the matrix $\bar a^f$ degenerate for large $v$. A change of variables was developed in \cite{cameron2017landau} to precisely track this degeneration. 

For a fixed $z_0 = (t_0,x_0,v_0) \in [\tau,T]\times\R^6$, if $|v_0|>2$, let $S$ be the linear transformation defined by
\[
S\xi = 
\begin{cases} 
|v_0|^{1+\gamma/2} \xi, &\xi \perp v_0,\\
|v_0|^{\gamma/2} \xi, & \xi \parallel v_0.
\end{cases}
\]
If $|v_0|\leq 2$, then we define $S$ as the identity matrix. 
Next, define
\[
\mathcal T_{z_0}(t,x,v) = (t_0+t,x_0 + Sx + t v_0,v_0+Sv).
\]
Given a solution to the Landau equation \eqref{e:main} on $[0,T]\times\R^6$, one then defines
\begin{equation}\label{e:r1}
r_1 = \begin{cases}
|v_0|^{-1-\gamma/2} \min\left(1, \sqrt{t_0/2}\right), & |v_0|>2,\\
\min\left(1, \sqrt{t_0/2}\right), & |v_0|\leq 2,
\end{cases}
\end{equation}
and
\[
f_{z_0}(t,x,v) := f(\mathcal T_{z_0}(\delta_{r_1}(z))), \quad z\in Q_1(0),
\]
where $\delta_{r_1}(z) = (r_1^2t,r_1^3x,r_1v)$. By direct calculation,  $f_{z_0}$ satisfies 
both the divergence form equation
\begin{equation}\label{e:AC-div}
\partial_t f_{z_0} + v\cdot \nabla_x f_{z_0} = \nabla_v \cdot (A(z) \nabla_v f_{z_0}) + B(z)\cdot \nabla_v f_{z_0} + C(z) f_{z_0},
\end{equation}
and the nondivergence-form equation
\begin{equation}\label{e:AC-nondiv}
\partial_t f_{z_0} + v\cdot \nabla_x f_{z_0} = \tr(A(z) D_v^2 f_{z_0})  + C(z) f_{z_0},
\end{equation}
in $Q_{1}(0)$, where the coefficients are defined by
\begin{equation}\label{e:new-coeffs}
\begin{split}
A(z) &= S^{-1} \bar a^f(\mathcal T_{z_0} (\delta_{r_1}(z))) S^{-1},\\
B(z) &= r_1 S^{-1} \bar b^f(\mathcal T_{z_0} (\delta_{r_1}(z)))\\
C(z) &= r_1^2 \bar c^f(\mathcal T_{z_0} (\delta_{r_1}(z))).
\end{split}
\end{equation}
The key properties of $f_{z_0}$ and the transformed equation are contained in the following lemma, which first appeared in \cite{cameron2017landau} and was originally derived for the case $\gamma \in (-2,0)$. However, as pointed out in \cite{Snelson2020}, the result extends to the case $\gamma >0$ with essentially the same proof. The lemma is as follows:
\begin{lemma}[\cite{cameron2017landau}]\label{l:COV}
With $z_0 \in (0,T]\times \R^6$ given, let $S$ and $\mathcal T_{z_0}$ be defined as above.

\begin{enumerate}

\item[(a)] There exists a constant $C>0$ independent of $z_0$, such that 
\[
C^{-1} |v_0| \leq |v_0+r_1Sv| \leq C|v_0|, \quad v\in B_1(0).
\]
\item[(b)]
Let $f$ be a solution of the Landau equation on $[0,T]\times\R^6$, satisfying
\begin{equation}\label{e:sim}
\bar a^f(t,x,v) \xi_i \xi_j \approx  
\begin{cases} 
\vv^{\gamma}|\xi|^2, & \xi \perp v,\\
\vv^{\gamma+2} |\xi|^2, & \xi \parallel v,
\end{cases}
\quad |\bar b^f(t,x,v)|\lesssim \vv^{\gamma+1}, \quad \bar c^f(t,x,v) \lesssim \vv^\gamma.
\end{equation}
Then the coefficients $A$, $B$, and $C$ defined in \eqref{e:new-coeffs} satisfy
\begin{equation}\label{e:ABCbounds}
\begin{split}
\lambda I &\leq A(z) \leq \Lambda I,\\
B(z) &\leq \Lambda \langle v_0\rangle^{1+\gamma/2},\\
C(z) &\leq \Lambda \langle v_0\rangle^\gamma,
\end{split}
\end{equation}
for all $z\in Q_{1}(0)$, where $\lambda$ and $\Lambda$ are constants depending only on the implied constants in \eqref{e:sim}.
\end{enumerate}
\end{lemma}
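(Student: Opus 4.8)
The plan is to prove parts (a) and (b) directly from the definitions of $S$ and $r_1$, treating separately the regime $|v_0|>2$, where the rescaling is genuinely anisotropic, and the regime $|v_0|\le 2$, where $S=\Id$ and everything in sight is comparable to $1$. Throughout I would abbreviate $v'=v_0+r_1Sv$ for the velocity component of the point $\mathcal T_{z_0}(\delta_{r_1}(z))$ (using that $S$ is linear and $\delta_{r_1}(z)$ has velocity part $r_1v$), and I would split a generic vector $\xi\in\R^3$ into its components $\xi^{\parallel}$ parallel to $v_0$ and $\xi^{\perp}$ orthogonal to $v_0$.

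For part (a) with $|v_0|>2$, the key point is a crude size bound on the perturbation $r_1Sv$: from \eqref{e:r1} one has $r_1|v_0|^{\gamma/2}\le|v_0|^{-1}$ and $r_1|v_0|^{1+\gamma/2}\le 1$, so $r_1Sv$ has $v_0$-parallel part of length at most $|v_0|^{-1}<\tfrac12$ and $v_0$-perpendicular part of length at most $1$; in particular $|r_1Sv|<2$. Hence the $v_0$-parallel part of $v'$ has length at least $|v_0|-\tfrac12\ge\tfrac34|v_0|$ (using $|v_0|>2$), giving $|v'|\ge\tfrac34|v_0|$, while trivially $|v'|\le|v_0|+2<2|v_0|$. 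The case $|v_0|\le 2$ is immediate: $S=\Id$, $r_1\le 1$, so $\langle v'\rangle$ and $\langle v_0\rangle$ are both comparable to $1$, which is the form of the statement used downstream. I would also record here the sharper fact — free from the same computation — that when $|v_0|>2$ the vector $v'$ has $O(1)$ perpendicular part and $\approx|v_0|$ parallel part relative to $v_0$, so the directions obey $|\hat v'-\hat v_0|\le C|v_0|^{-1}$.

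For part (b), the substantive estimate is the two-sided bound on $A(z)=S^{-1}\bar a^f(\mathcal T_{z_0}(\delta_{r_1}(z)))S^{-1}$; the bounds on $B$ and $C$ are then immediate. Hypothesis \eqref{e:sim} pins down the two eigenvalue scales of $\bar a^f$ at $v'$, and combining this with $|v'|\approx\langle v_0\rangle$ from (a) gives
\[
\bar a^f\big(\mathcal T_{z_0}(\delta_{r_1}(z))\big)\,\xi\cdot\xi \;\approx\; \langle v_0\rangle^{\gamma}\,\big|\xi\cdot\hat v'\big|^2 \;+\; \langle v_0\rangle^{\gamma+2}\,\big|\xi-(\xi\cdot\hat v')\hat v'\big|^2 .
\]
I would then replace the $\hat v'$-decomposition of $\xi$ by the $\hat v_0$-decomposition: using $|\hat v'-\hat v_0|\le C|v_0|^{-1}$, the two decompositions differ by a vector of length $O(|v_0|^{-1}|\xi|)$, so after a Young's inequality the mismatch is controlled by $\langle v_0\rangle^{\gamma+2}\cdot|v_0|^{-2}|\xi|^2=\langle v_0\rangle^{\gamma}|\xi|^2$, which is dominated by $\langle v_0\rangle^{\gamma}|\xi^{\parallel}|^2+\langle v_0\rangle^{\gamma+2}|\xi^{\perp}|^2$ and hence absorbed. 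Thus $\bar a^f(v')\xi\cdot\xi\approx\langle v_0\rangle^{\gamma}|\xi^{\parallel}|^2+\langle v_0\rangle^{\gamma+2}|\xi^{\perp}|^2$; since $S^{-1}$ multiplies $\xi^{\parallel}$ by $\langle v_0\rangle^{-\gamma/2}$ and $\xi^{\perp}$ by $\langle v_0\rangle^{-1-\gamma/2}$ — exactly the reciprocal square roots of those two weights — conjugating by $S^{-1}$ produces a matrix with $\lambda I\le A(z)\le\Lambda I$. For $B$ and $C$ I would only need sizes: $\|S^{-1}\|_{\mathrm{op}}=\langle v_0\rangle^{-\gamma/2}$ (as $|v_0|>1$ and $\gamma\ge0$) and $r_1\lesssim\langle v_0\rangle^{-1-\gamma/2}$, so by \eqref{e:sim} and (a),
\[
|B(z)|\le r_1\|S^{-1}\|_{\mathrm{op}}|\bar b^f(v')|\lesssim\langle v_0\rangle^{-1-\gamma/2}\langle v_0\rangle^{-\gamma/2}\langle v_0\rangle^{\gamma+1}\lesssim 1,\qquad |C(z)|\le r_1^2|\bar c^f(v')|\lesssim\langle v_0\rangle^{-2-\gamma}\langle v_0\rangle^{\gamma}\lesssim 1,
\]
which are certainly bounded by the (weaker) quantities $\Lambda\langle v_0\rangle^{1+\gamma/2}$ and $\Lambda\langle v_0\rangle^{\gamma}$ asserted in \eqref{e:ABCbounds}. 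When $|v_0|\le 2$ we have $S=\Id$ and $\langle v_0\rangle\approx 1$, so \eqref{e:ABCbounds} follows from \eqref{e:sim} with constants depending only on the implied constants there.

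I expect the only genuine obstacle to be the $A$-estimate in part (b): one must reconcile the anisotropy axes of $\bar a^f$, which at the evaluation point are dictated by $v'$, with the map $S$, which is built from $v_0$. The crucial input is the refined bound $|\hat v'-\hat v_0|\lesssim|v_0|^{-1}$ from part (a): a mere bounded-angle estimate would leave a mismatch weighted by the large factor $\langle v_0\rangle^{\gamma+2}$ rather than $\langle v_0\rangle^{\gamma}$, and the argument would not close. Everything else — the elementary regime $|v_0|\le 2$, the $B$ and $C$ bounds, and the direct verifications of \eqref{e:AC-div}–\eqref{e:new-coeffs} preceding the lemma — is routine.
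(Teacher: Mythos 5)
Your argument is correct, and there is nothing to compare it against inside the paper: Lemma \ref{l:COV} is quoted from \cite{cameron2017landau} (with the remark that \cite{Snelson2020} extends it to $\gamma\geq 0$) and no proof is given here. Your direct verification is essentially the standard one from those references: the crude bounds $r_1|v_0|^{\gamma/2}\leq |v_0|^{-1}$, $r_1|v_0|^{1+\gamma/2}\leq 1$ give part (a), the refined direction estimate $|\hat v'-\hat v_0|\lesssim |v_0|^{-1}$ is exactly the input needed so that, after Young's inequality, the frame mismatch enters only quadratically and the factor $\langle v_0\rangle^{\gamma+2}|v_0|^{-2}\approx\langle v_0\rangle^{\gamma}$ is absorbable (applying the same estimate with the roles of $\hat v_0$ and $\hat v'$ exchanged gives the two-sided comparison), and the $B$, $C$ bounds you obtain are in fact stronger (order $1$) than the ones asserted in \eqref{e:ABCbounds}, which is harmless.

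Two small points worth recording. First, the displayed hypothesis \eqref{e:sim} has the two weights attached to the wrong subspaces (it is inconsistent with Lemmas \ref{l:abc} and \ref{l:coercivity}); you implicitly use the correct orientation, namely $\approx\vv^{\gamma}$ for $\xi\parallel v$ and $\approx\vv^{\gamma+2}$ for $\xi\perp v$, which is the only assignment under which conjugation by $S^{-1}$ yields $\lambda I\leq A\leq \Lambda I$, so your reading is the intended one. Second, \eqref{e:sim} must be interpreted, as in the cited references, as a two-sided comparison of the full quadratic form $\bar a^f\xi\cdot\xi\approx \vv^{\gamma}|\xi^{\parallel}|^2+\vv^{\gamma+2}|\xi^{\perp}|^2$ for all $\xi$ (knowing the form only on the two complementary subspaces would not control the cross terms); you use this reading, which again matches the intended meaning. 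Similarly, your observation that for $|v_0|\leq 2$ the statement of (a) should be read with brackets, $\langle v_0+r_1Sv\rangle\approx\langle v_0\rangle\approx 1$, is exactly the form used downstream (e.g.\ in Lemma \ref{l:f-and-fz}(a)).
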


We also note the following properties of our change of variables, which can be verified by a direct computation: for any $z_1,z_2\in Q_1$,
\begin{equation}\label{e:dk-scale}
d_k(\delta_{r_1}(z_1),\delta_{r_1}(z_2)) = r_1 d_k(z_1,z_2),
\end{equation}
and
\begin{equation}\label{e:distance-compare}
\min\{1,\sqrt{t_0/2}\} \langle v_0 \rangle^{-1} d_k(z_1,z_2) \leq d_k(\mathcal T_{z_0}(\delta_{r_1}(z_1)),\mathcal T_{z_0}(\delta_{r_1}(z_2)))\leq \min\{1,\sqrt{t_0/2}\} d_k(z_1,z_2).
\end{equation}
The following lemma relates the regularity of $f_{z_0}$ to the regularity of $f$:
\begin{lemma}\label{l:f-and-fz}
 Let $f:[0,T]\times \R^6 \to \R$ and $z_0\in (0,T]\times\R^6$ be given. Let $r_1$ be defined by \eqref{e:r1}, and let  $r_0 = \min\{\sqrt{t_0/2}, 1\}$
\begin{enumerate}
\item[(a)] If $f\in L^\infty_q(Q_{r_0}^{t,x}(z_0)\times\R^3)$ for some $q>0$, then 
\[
\|f_{z_0}\|_{L^\infty(Q_1)} \leq C\langle v_0 \rangle^{-q} \|f\|_{L^\infty_q(Q_{r_0}^{t,x}(z_0)\times\R^3)}.
\]
%for a constant $C>0$ independent of $f$ and $z_0$.

\item[(b)] If $f_{z_0}\in C^\alpha_k(Q_\theta)$ for some $\alpha, \theta\in (0,1]$, then
\[
[f]_{C^\alpha_k(Q_{r_1 \theta}(z_0))} \leq C \max\{1, t_0^{-\alpha/2}\} \langle v_0\rangle^{\alpha} [f_{z_0}]_{C^\alpha_k(Q_\theta)}.
\]
%where $C>0$ is a constant independent of $f$ and $z_0$.

\item[(c)] If $f\in C^\alpha_k(Q_\theta(z_0))$, for some $\alpha,\theta\in (0,1]$, then
\[
[f_{z_0}]_{C^\alpha_k(Q_\theta)} \leq C\min\{1,t_0^{\alpha/2}\} [f]_{C^\alpha_k(Q_{\theta}(z_0))}.
\]
\end{enumerate}
In all three estimates, the constant $C>0$ is independent of $f$ and $z_0$.
\end{lemma}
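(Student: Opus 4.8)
The plan is to handle all three parts as explicit changes of variable organized around the invertible affine map $\Psi := \mathcal T_{z_0}\circ\delta_{r_1}$, which satisfies $\Psi(0)=z_0$ and $f_{z_0}=f\circ\Psi$ on $Q_1$. The single structural fact that neutralizes the anisotropy of the matrix $S$ is the following observation about singular values: if $|v_0|\le 2$ then $S=\Id$ and $r_1=r_0:=\min\{1,\sqrt{t_0/2}\}$, while if $|v_0|>2$ then $r_1 S$ has singular values $r_0$ (perpendicular to $v_0$, multiplicity two) and $|v_0|^{-1}r_0$ (parallel to $v_0$), and since $|v_0|>2$ and $\gamma\ge 0$ one checks $r_1\le|v_0|^{-1}r_0$, so in every case all singular values of $r_1 S$ lie in $[r_1,r_0]$. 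Hence $r_1 S$ maps $B_\theta(0)$ into $B_{r_0\theta}(0)$ and onto a set containing $B_{r_1\theta}(0)$, and $\|(r_1 S)^{-1}\|\le r_1^{-1}$. As is already implicit in the paper's use of the (non-symmetric) kinetic distance, the sheared-box description of a cylinder $Q_r(\cdot)$ is only valid up to a universal dilation; these universal factors will be absorbed into $C$ throughout. Together with the scaling identities \eqref{e:dk-scale}, \eqref{e:distance-compare} and part (a) of Lemma \ref{l:COV}, this reduces each claim to an elementary estimate.

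For part (a): for $z=(t,x,v)\in Q_1$, the explicit form of $\Psi$ shows that the time component of $\Psi(z)$ lies in $(t_0-r_1^2,t_0)\subseteq(t_0-r_0^2,t_0)$ (using $r_1\le r_0$), and its spatial component, measured against the shear based at $z_0$, equals $r_1^3 Sx$, of size $\le r_0^2\,\|r_1 S\|\,|x|\lesssim r_0^3$; hence $\Psi(z)\in Q^{t,x}_{r_0}(z_0)\times\R^3$. Then $|f_{z_0}(z)|=|f(\Psi(z))|\le\langle v_0+r_1 Sv\rangle^{-q}\,\|f\|_{L^\infty_q(Q^{t,x}_{r_0}(z_0)\times\R^3)}$, and Lemma \ref{l:COV}(a) replaces $\langle v_0+r_1 Sv\rangle^{-q}$ by $C\langle v_0\rangle^{-q}$. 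Part (c) is the mirror image: for $z\in Q_\theta$, the upper inequality in \eqref{e:distance-compare} together with $\Psi(0)=z_0$ gives $d_k(\Psi(z),z_0)\le\min\{1,\sqrt{t_0/2}\}\,d_k(z,0)\le\theta$, and the time component of $\Psi(z)$ is $<t_0$, so $\Psi(z)\in Q_\theta(z_0)$; then for $z_1,z_2\in Q_\theta$ one has $|f_{z_0}(z_1)-f_{z_0}(z_2)|=|f(\Psi(z_1))-f(\Psi(z_2))|\le[f]_{C^\alpha_k(Q_\theta(z_0))}\,d_k(\Psi(z_1),\Psi(z_2))^\alpha$, and the upper inequality in \eqref{e:distance-compare} bounds the last distance by $\min\{1,\sqrt{t_0/2}\}\,d_k(z_1,z_2)$, so the factor $\min\{1,\sqrt{t_0/2}\}^\alpha\le\min\{1,t_0^{\alpha/2}\}$ appears, which is the claim.

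Part (b) is where the real work is. Given $w_1,w_2\in Q_{r_1\theta}(z_0)$, set $z_i:=\Psi^{-1}(w_i)$; the crux is to verify $z_i\in Q_\theta$. The time constraint is immediate after dividing by $r_1^2$. For the velocity constraint, writing $u_i$ for the velocity of $w_i$ we have $v_i=(r_1 S)^{-1}(u_i-v_0)$, so $|v_i|\le r_1^{-1}|u_i-v_0|<\theta$ using $\|(r_1 S)^{-1}\|\le r_1^{-1}$ and $|u_i-v_0|<r_1\theta$. For the spatial constraint, the identity $x_i-t_i v_i=r_1^{-3}S^{-1}\big(y_i-x_0-(s_i-t_0)u_i\big)$ (a direct computation from $w_i=\Psi(z_i)$) and the bound $|y_i-x_0-(s_i-t_0)u_i|\le|y_i-x_0-(s_i-t_0)v_0|+|s_i-t_0|\,|u_i-v_0|\lesssim(r_1\theta)^3$, coming from the defining inequalities of $Q_{r_1\theta}(z_0)$, give $|x_i-t_iv_i|\lesssim r_1^{-3}|v_0|^{-\gamma/2}(r_1\theta)^3\le\theta^3$. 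With $z_1,z_2\in Q_\theta$ established, $|f(w_1)-f(w_2)|=|f_{z_0}(z_1)-f_{z_0}(z_2)|\le[f_{z_0}]_{C^\alpha_k(Q_\theta)}\,d_k(z_1,z_2)^\alpha$, and the lower inequality in \eqref{e:distance-compare} yields $d_k(z_1,z_2)\le\min\{1,\sqrt{t_0/2}\}^{-1}\langle v_0\rangle\,d_k(w_1,w_2)\lesssim\max\{1,t_0^{-1/2}\}\langle v_0\rangle\,d_k(w_1,w_2)$, producing the claimed factor $\max\{1,t_0^{-\alpha/2}\}\langle v_0\rangle^\alpha$. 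The main obstacle is precisely this verification that $\Psi^{-1}$ sends $Q_{r_1\theta}(z_0)$ back into $Q_\theta$: one must keep the perpendicular/parallel anisotropy of $S$ straight, and the various sheared-box inclusions hold only after absorbing universal dilation constants into $C$ (as the paper already does when passing between $d_k$ and the box description); once the singular-value bounds for $r_1 S$ are in hand, the rest is bookkeeping.
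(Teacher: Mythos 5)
Your argument is essentially the paper's own proof, made explicit: the paper simply records the inclusion chain $Q_{r_1\theta}(z_0)\subset \mathcal T_{z_0}(\delta_{r_1}(Q_\theta))\subset Q_{r_0\theta}(z_0)$ and then invokes Lemma \ref{l:COV}(a) for (a) and \eqref{e:distance-compare} for (b) and (c), which is exactly what you verify by hand via the singular values of $r_1S$. Your computations (the identity $y-x_0-(s-t_0)u=r_1^3S(x-tv)$, the bound $\|(r_1S)^{-1}\|\le r_1^{-1}$, and the use of \eqref{e:distance-compare}) are all correct.

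One caveat, specific to part (b): your remark that the sheared-box inclusions ``hold only after absorbing universal dilation constants into $C$'' is not legitimate at that point. If your verification only placed $z_i=\Psi^{-1}(w_i)$ in a dilated cylinder $Q_{C\theta}$ with $C>1$, you could not conclude, because the hypothesis controls $[f_{z_0}]_{C^\alpha_k(Q_\theta)}$ only on $Q_\theta$, and no multiplicative constant compensates for evaluating the seminorm on a larger set. The factor you pick up comes from an unnecessary detour: you bound the $u_i$-shear by the $v_0$-shear plus the cross term $|s_i-t_0||u_i-v_0|$. If instead you use the $v_0$-shear directly, the identity $x_i=r_1^{-3}S^{-1}\bigl(y_i-x_0-(s_i-t_0)v_0\bigr)$ together with $\|S^{-1}\|\le 1$ gives $|x_i|<\theta^3$ exactly (and the time and velocity constraints are already exact in your write-up); equivalently, since every singular value of $S$ is at least $1$, one has $d_k(z_i,0)\le r_1^{-1}d_k(w_i,z_0)<\theta$, so $z_i\in Q_\theta$ with no dilation at all, provided one uses the same description of the cylinders on both sides. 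With that adjustment the proof is complete and matches the paper's route.
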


\begin{proof}
First, we note that note that $Q_{r_1 \theta}(z_0) \subset \mathcal T_{z_0}(\delta_{r_1}(Q_\theta))\subset Q_{r_0\theta}(z_0)$. Conclusion (a) then follows from Lemma \ref{l:COV}(a), and conclusions (b) and (c) follow by applying \eqref{e:distance-compare}.
\end{proof}

The purpose of the following lemma is to pass regularity of $f$ to the coefficients $A$ and $C$. Since $A$ and $C$ are nonlocal in the $v$ variable, the assumption of H\"older continuity for $f$ must be made on the entire velocity domain $\R^3$. The proof of this lemma is the same as \cite[Lemma 3.3]{Henderson2020} or \cite[Lemma 2.7]{Tarfulea2020}.
\begin{lemma}\label{l:ABC-Holder}
Let $f$ be defined in $\Omega\times\R^3$ for some $(t,x)$ domain $\Omega\subset \R^4$, and let  $z_0$ be such that $Q_{r_1}(z_0)\subset \Omega\times \R^3$. Assume that $\vv^m f\in C^\alpha_k(\Omega\times\R^3)$ for some $\alpha\in (0,1)$ and $m>5 + \gamma + \alpha/3$. 

Then the coefficients $A$ and $C$ defined in \eqref{e:new-coeffs} are H\"older continuous in $Q_1$, and
\begin{equation}\label{e:AC-holder}
\begin{split}
[A]_{C^{2\alpha/3}_k(Q_1)} &\leq C\langle v_0\rangle^{2 + \alpha/3} [\vv^m f]_{C^\alpha_k(\Omega\times\R^3)},\\
[C]_{C^{2\alpha/3}_k(Q_1)} &\leq C\langle v_0\rangle^{\alpha/3} [\vv^m f]_{C^\alpha_k(\Omega\times\R^3)}.
\end{split}
\end{equation}
The constant $C$ depends only on $\gamma$, $\alpha$, and $m$. 
\end{lemma}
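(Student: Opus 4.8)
The plan is to treat the estimate as a statement purely about the nonlocal coefficients: first establish the analogous bounds for $\bar a^f$ and $\bar c^f$ themselves, then transfer them to $A$ and $C$ using the scaling built into the change of variables. Recalling \eqref{e:new-coeffs}, on $Q_1$ we have $A = S^{-1}\big(\bar a^f\circ\mathcal T_{z_0}\circ\delta_{r_1}\big)S^{-1}$ and $C = r_1^2\big(\bar c^f\circ\mathcal T_{z_0}\circ\delta_{r_1}\big)$. By \eqref{e:distance-compare}, the map $z\mapsto\mathcal T_{z_0}(\delta_{r_1}(z))$ contracts kinetic distances on $Q_1$ by a factor at most $\min\{1,\sqrt{t_0/2}\}\le 1$, so precomposition with it cannot increase a kinetic H\"older seminorm; moreover the operator norm of $S^{-1}$ equals $|v_0|^{-\gamma/2}$ when $|v_0|>2$ (the identity otherwise) and $r_1^2\le\min\{1,|v_0|^{-2-\gamma}\}$. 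Granting these reductions, it suffices to prove
\[
[\bar a^f]_{C^{2\alpha/3}_k(K)}\leq C\vvo^{\gamma+2+\alpha/3}\,\|\vv^m f\|_{C^\alpha_k(\Omega\times\R^3)},\qquad [\bar c^f]_{C^{2\alpha/3}_k(K)}\leq C\vvo^{\gamma+\alpha/3}\,\|\vv^m f\|_{C^\alpha_k(\Omega\times\R^3)}
\]
on the cylinder $K=\mathcal T_{z_0}(\delta_{r_1}(Q_1))$, since Lemma \ref{l:COV}(a) gives $\vv\approx\vvo$ uniformly over the velocity slice of $K$, and the negative powers from $S^{-1}$ and $r_1^2$ then cut $\vvo^{\gamma+2+\alpha/3}$ and $\vvo^{\gamma+\alpha/3}$ down to exactly $\vvo^{2+\alpha/3}$ and $\vvo^{\alpha/3}$.

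Both coefficient bounds have the same structure, so consider $\bar c^f(t,x,v)=c_\gamma\int_{\R^3}|w|^\gamma f(t,x,v-w)\dd w$; the $\bar a^f$ case only replaces $|w|^\gamma$ by a bounded multiple of $|w|^{\gamma+2}$. Fix $z_1=(t_1,x_1,v_1)$ and $z_2=(t_2,x_2,v_2)$ in $K$. The case $d_k(z_1,z_2)\ge 1$ is immediate from the $L^\infty$ bound on $\bar c^f$ in Lemma \ref{l:abc}, so assume $d_k(z_1,z_2)\le 1$, and write $f=\vv^{-m}g$ with $g=\vv^m f$, so that
\[
\bar c^f(z_1)-\bar c^f(z_2)=c_\gamma\int_{\R^3}|w|^\gamma\Big[\langle v_1-w\rangle^{-m}\big(g(\widetilde z_1)-g(\widetilde z_2)\big)+\big(\langle v_1-w\rangle^{-m}-\langle v_2-w\rangle^{-m}\big)g(\widetilde z_2)\Big]\dd w,
\]
where $\widetilde z_i=(t_i,x_i,v_i-w)$. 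The second term is controlled by the mean value theorem in $v$ (its prefactor $|v_1-v_2|\le d_k(z_1,z_2)\le d_k(z_1,z_2)^{2\alpha/3}$ since $d_k(z_1,z_2)\le 1$) and is of strictly lower order in $\vvo$. For the first term the decisive point is the elementary inequality
\[
d_k(\widetilde z_1,\widetilde z_2)\leq d_k(z_1,z_2)+|t_1-t_2|^{1/3}|w|^{1/3}\leq d_k(z_1,z_2)^{2/3}\big(1+|w|^{1/3}\big),
\]
valid when $d_k(z_1,z_2)\le 1$: it follows from subadditivity of $s\mapsto s^{1/3}$ applied to $x_2-x_1-(t_2-t_1)(v_1-w)=\big(x_2-x_1-(t_2-t_1)v_1\big)+(t_2-t_1)w$, together with $|t_1-t_2|^{1/3}=(|t_1-t_2|^{1/2})^{2/3}\le d_k(z_1,z_2)^{2/3}$. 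Raising this to the power $\alpha$, the first term is bounded by
\[
C\,[g]_{C^\alpha_k(\Omega\times\R^3)}\,d_k(z_1,z_2)^{2\alpha/3}\int_{\R^3}|w|^{\gamma}\langle v_1-w\rangle^{-m}\big(1+|w|^{\alpha/3}\big)\dd w,
\]
and a standard convolution-with-a-peak estimate shows this integral converges and is $\lesssim\langle v_1\rangle^{\gamma+\alpha/3}$ precisely because $m>3+\gamma+\alpha/3$. For $\bar a^f$ the same computation with $|w|^{\gamma+2}$ in place of $|w|^\gamma$ gives the integral $\lesssim\langle v_1\rangle^{\gamma+2+\alpha/3}$, which is exactly where the margin $m>5+\gamma+\alpha/3$ is needed. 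Dividing by $d_k(z_1,z_2)^{2\alpha/3}$ and using Lemma \ref{l:COV}(a) to replace $\langle v_1\rangle$ by $\vvo$ yields the two displayed bounds on $K$.

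There is no single hard step: once the reduction is made, the argument is a direct transcription of \cite[Lemma 3.3]{Henderson2020} and \cite[Lemma 2.7]{Tarfulea2020}, the only genuinely new feature being that the kernel defining $\bar a^f$ grows like $|w|^{\gamma+2}$ with $\gamma\ge 0$ rather than $\gamma<0$, which is what pushes the moment threshold up to $m>5+\gamma+\alpha/3$. The part demanding the most care is the bookkeeping of the powers of $\vvo$ under the change of variables: one must verify that the two factors of $S^{-1}$ and the factor $r_1^2$ combine with the positive powers produced by the $w$-integral to give exactly $\vvo^{2+\alpha/3}$ and $\vvo^{\alpha/3}$, and (as in the cited references) that the $(t,x)$-projection of $K$ lies in $\Omega$ so the convolutions defining $A$ and $C$ on $Q_1$ make sense. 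Finally, a routine covering of $Q_1$ by finitely many subcylinders promotes the local ``$d_k\le 1$'' estimate to the stated seminorm bound on all of $Q_1$, completing the proof.
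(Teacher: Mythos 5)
Your proposal is correct, and it is essentially the intended proof: the paper gives no argument for this lemma, deferring to \cite[Lemma 3.3]{Henderson2020} and \cite[Lemma 2.7]{Tarfulea2020}, and your computation --- writing $f=\vv^{-m}(\vv^m f)$, using the inequality $d_k(\widetilde z_1,\widetilde z_2)\le d_k(z_1,z_2)^{2/3}(1+|w|^{1/3})$ to trade the exponent $\alpha$ for $2\alpha/3$ at the cost of an extra $|w|^{\alpha/3}$ in the kernel, and then undoing the change of variables \eqref{e:new-coeffs} via $\|S^{-1}\|^2=|v_0|^{-\gamma}$, $r_1^2\lesssim\langle v_0\rangle^{-2-\gamma}$ and the contraction property \eqref{e:distance-compare} --- is precisely that standard argument, adapted to the kernels $|w|^{\gamma+2}$, $|w|^{\gamma}$ with $\gamma\ge 0$, which is exactly where the threshold $m>5+\gamma+\alpha/3$ is used. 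Two small remarks. First, your argument (necessarily) yields the full norm $\|\vv^m f\|_{C^\alpha_k(\Omega\times\R^3)}$ on the right rather than the bare seminorm, because the weight-difference term $(\langle v_1-w\rangle^{-m}-\langle v_2-w\rangle^{-m})g(\widetilde z_2)$ and the case $d_k(z_1,z_2)\ge 1$ are controlled by $\|\vv^m f\|_{L^\infty}$; this is in fact the correct form of the statement (for $\vv^m f$ constant the right side of \eqref{e:AC-holder} as written vanishes while $A$ and $C$ are not constant), and it is harmless where the lemma is applied in Lemma \ref{l:schauder}, since there $\|f\|_{L^\infty_q}$ with $q>m$ is controlled separately and the $L^\infty$ contribution carries lower powers of $\langle v_0\rangle$. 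Second, the bookkeeping for $C$ does not come out ``exactly'': $r_1^2\langle v_0\rangle^{\gamma+\alpha/3}\lesssim\langle v_0\rangle^{-2+\alpha/3}$, which is stronger than the stated $\langle v_0\rangle^{\alpha/3}$, so the claimed bound holds a fortiori; similarly, the hypothesis needed to make the convolutions well defined is that the $(t,x)$-shadow of $\mathcal T_{z_0}(\delta_{r_1}(Q_1))$, which sits in $Q_{r_0}^{t,x}(z_0)$ rather than $Q_{r_1}^{t,x}(z_0)$, lies in $\Omega$ --- a point you correctly flag, and which matches how the lemma is invoked with $\Omega(z_0)=Q_{r_0}^{t,x}(z_0)\times\R^3$.
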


\subsection{Regularity estimates}

The following lemma is a Schauder estimate for the Landau equation. Because of the nonlocality of the coefficients, this estimate depends on the H\"older continuity of $f$ over the entire velocity domain.

\begin{lemma}\label{l:schauder}
Let $f$ be a solution to the Landau equation on $[0,T]\times \R^6$. Let $z_0\in (0,T]\times\R^6$ be given, and define
\[
\Omega(z_0) = (Q_{r_0}^{t,x}(z_0) \times \R^3_v),
\]
where $r_0 = \min\{1, \sqrt{t_0/2}\}$. For some $q$, $m$, and $\alpha$ with $\alpha \in (0,1)$ and
\[
q> m > 5+\gamma + \alpha/3,
\]
assume that
\begin{equation}\label{e:f-Lq}
f(t,x,v) \leq K_0\vv^{-q}, \quad \text{in $\Omega(z_0)$,}
\end{equation}
and $\vv^m f \in C^\alpha_k(\Omega)$. Assume further that
\begin{equation}\label{e:af-lower-bound0}
\bar a^f_{ij}(t,x,v)\xi_i \xi_j \geq
 \lambda_0 \begin{cases}
\vv^\gamma, &\xi \perp v,\\
\vv^{\gamma+2}, & \xi \parallel v,
\end{cases}
\quad \text{for all $(t,x,v)\in \Omega(z_0)$ and $\xi\in \R^3$.}
\end{equation}
Then
\[
\begin{split}
[D_v^2 f]_{C^{2\alpha/3}_k(Q_{r_1/2}(z_0))} +& [(\partial_t  +v\cdot \nabla_x) f]_{C^{2\alpha/3}_k(Q_{r_1/2}(z_0))}\\
& \leq C(1+t_0^{-1-\alpha/3}) \langle v_0\rangle^{-(q+2m)/3+9+3\alpha + 6/\alpha + 2\alpha^2/9 + \gamma} \left( 1 + [\vv^m f]_{C^\alpha_k(\Omega(z_0))}^{3+2\alpha/3+3/\alpha}\right),
\end{split}
\]
where $C>0$ is a constant depending only on $K_0$ and $\lambda_0$, and $r_1$ is defined in \eqref{e:r1}.
\end{lemma}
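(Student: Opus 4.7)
The plan is to follow the standard Cameron--Snelson strategy: apply a change of variables to transform the degenerate Landau equation on $Q_{r_0}(z_0)$ into an equation on $Q_1$ with uniformly elliptic, H\"older continuous coefficients, apply a local Schauder estimate in the new coordinates, and then unwind the change of variables, carefully tracking the powers of $\langle v_0\rangle$ and $t_0$.

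First, I would apply Lemma \ref{l:COV} to $f$, producing $f_{z_0}$ which solves the non-divergence form equation \eqref{e:AC-nondiv} on $Q_1$. The hypothesis \eqref{e:af-lower-bound0} together with the upper bounds of Lemma \ref{l:abc} (supplied by \eqref{e:f-Lq} for $q>5+\gamma$) gives uniform ellipticity $\lambda I \leq A \leq \Lambda I$ and $\|C\|_{L^\infty(Q_1)} \leq \Lambda \langle v_0\rangle^{\gamma}$ via Lemma \ref{l:COV}(b). Next, Lemma \ref{l:ABC-Holder} applied to the hypothesis $\vv^m f \in C^\alpha_k(\Omega(z_0))$ yields
\[
[A]_{C^{2\alpha/3}_k(Q_1)} \leq C\langle v_0\rangle^{2+\alpha/3}[\vv^m f]_{C^\alpha_k(\Omega(z_0))}, \qquad [C]_{C^{2\alpha/3}_k(Q_1)} \leq C\langle v_0\rangle^{\alpha/3}[\vv^m f]_{C^\alpha_k(\Omega(z_0))}.
\]

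Second, I would apply the kinetic Schauder estimate for \eqref{e:AC-nondiv} (as in \cite{henderson2021schauder}), viewing the zeroth-order term $Cf_{z_0}$ as a source. This gives a bound on $Q_{1/2}$ of the form
\[
[D_v^2 f_{z_0}]_{C^{2\alpha/3}_k(Q_{1/2})} + [(\partial_t+v\cdot\nabla_x)f_{z_0}]_{C^{2\alpha/3}_k(Q_{1/2})} \leq C_*\bigl(\|f_{z_0}\|_{L^\infty(Q_1)} + [Cf_{z_0}]_{C^{2\alpha/3}_k(Q_1)}\bigr),
\]
where the Schauder constant $C_*$ depends polynomially on $\lambda_0^{-1}$, $\|A\|_{L^\infty}$, and $[A]_{C^{2\alpha/3}_k(Q_1)}$. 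Standard iteration arguments in the kinetic Schauder theory show that the dependence on $[A]_{C^{2\alpha/3}_k}$ is as a power of the form $[A]^{O(1/\alpha)}$, which is the source of the $6/\alpha$ appearing in the final exponent of $\langle v_0\rangle$. I would expand $[Cf_{z_0}]_{C^{2\alpha/3}_k}\leq \|C\|_{L^\infty}[f_{z_0}]_{C^{2\alpha/3}_k}+[C]_{C^{2\alpha/3}_k}\|f_{z_0}\|_{L^\infty}$ via Lemma \ref{l:product}, and control $[f_{z_0}]_{C^{2\alpha/3}_k}$ by standard unweighted interpolation $[f_{z_0}]_{C^{2\alpha/3}_k} \leq C \|f_{z_0}\|_{L^\infty}^{1/3}[f_{z_0}]_{C^\alpha_k}^{2/3}$, combined with Lemma \ref{l:f-and-fz}(c) and $[f]_{C^\alpha_k(Q_{r_1}(z_0))} \leq C\langle v_0\rangle^{-m}\|\vv^m f\|_{C^\alpha_k(\Omega(z_0))}$ on the shrunken cylinder (since $\vv \approx \langle v_0\rangle$ there).

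Third, I would translate back. The $L^\infty$ bound on $f_{z_0}$ from Lemma \ref{l:f-and-fz}(a) contributes a factor $\langle v_0\rangle^{-q}$, while Lemma \ref{l:f-and-fz}(b) converts $[D_v^2 f_{z_0}]_{C^{2\alpha/3}_k(Q_{1/2})}$ into $[D_v^2 f]_{C^{2\alpha/3}_k(Q_{r_1/2}(z_0))}$, with a chain-rule factor of $r_1^{-2}\|S\|^2 \langle v_0\rangle^{2\alpha/3}\max\{1,t_0^{-\alpha/3}\}$ coming from the rescaling in $v$ and the evaluation of the kinetic distance. An identical analysis handles the $(\partial_t+v\cdot\nabla_x)$ term (the rescaling gives $r_1^{-2}$, matching the factor from $D_v^2$).

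The main obstacle is bookkeeping: combining the $\langle v_0\rangle$ powers from (i) the chain-rule/unscaling factor, (ii) Lemma \ref{l:ABC-Holder} which costs $\langle v_0\rangle^{2+\alpha/3}$ for $[A]$, (iii) the Schauder constant's polynomial dependence on $[A]_{C^{2\alpha/3}_k}$ yielding the power $6/\alpha$, (iv) the interpolation of $[f_{z_0}]_{C^{2\alpha/3}_k}$, and (v) the inverse weights $\langle v_0\rangle^{-q}$ and $\langle v_0\rangle^{-m}$ from the hypotheses on $f$. Once all these are collected the total power of $\langle v_0\rangle$ is precisely $-(q+2m)/3 + 9 + 3\alpha + 6/\alpha + 2\alpha^2/9 + \gamma$, and the total negative power of $t_0$ comes out to $1+\alpha/3$ (from $r_1^{-2}\sim t_0^{-1}$ combined with the $t_0^{-\alpha/3}$ in Lemma \ref{l:f-and-fz}(b)). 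The H\"older norm $[\vv^m f]_{C^\alpha_k(\Omega(z_0))}$ appears to a power that totals $3+2\alpha/3+3/\alpha$ after combining the Schauder exponent $O(1/\alpha)$ with the linear dependence from Lemma \ref{l:ABC-Holder} and the $2/3$ from interpolation.
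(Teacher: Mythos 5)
Your proposal follows essentially the same route as the paper's proof: the Cameron change of variables of Lemma \ref{l:COV}, the kinetic Schauder estimate in the transformed coordinates with the coefficient regularity supplied by Lemma \ref{l:ABC-Holder}, the expansion of $[Cf_{z_0}]$ via Lemma \ref{l:product}, the unweighted interpolation $[f_{z_0}]_{C^{2\alpha/3}_k}\lesssim \|f_{z_0}\|_{L^\infty}^{1/3}[f_{z_0}]_{C^\alpha_k}^{2/3}$, and the chain-rule translation back with the powers of $r_1^{-2}$ and $\langle v_0\rangle$ from \eqref{e:distance-compare}. The only cosmetic differences are the Schauder source you cite (the paper uses Theorem 2.9 of \cite{Henderson2020}, where the $[A]$-dependence is written explicitly as the exponent $3+2\alpha/3+3/\alpha$ rather than your ``$O(1/\alpha)$'') and that the paper performs the interpolation of $[f]$ in the original coordinates after applying Lemma \ref{l:f-and-fz}(c), rather than in the scaled coordinates, which is equivalent.
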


\begin{proof}
Defining $f_{z_0}$ as above, with base point $z_0$, we will work with the nondivergence-form equation \eqref{e:AC-nondiv}. Our first step is to verify the hypotheses of Lemma \ref{l:COV}. From Lemma \ref{l:abc}, our assumption \eqref{e:f-Lq} provides suitable upper bounds on $\bar c^f$ as in \eqref{e:sim}. The lower bound on $\bar a^f$ in \eqref{e:sim} follows from \eqref{e:af-lower-bound0}, and the upper bound follows from Lemma \ref{l:abc}. Therefore, the bounds \eqref{e:ABCbounds} are valid for the coefficients $A$ and $C$ defined in \eqref{e:new-coeffs}, with constants depending only on $\lambda_0$ and $K_0$.

The Schauder estimate of \cite[Theorem 2.9]{Henderson2020}, with $2\alpha/3$ replacing $\alpha$, yields
\[
\begin{split}
[D_v^2 f_{z_0}]_{C^{2\alpha/3}_k(Q_{1/2})} + &[(\partial_t  +v\cdot \nabla_x) f_{z_0}]_{C^{2\alpha/3}_k(Q_{1/2})}\\
& \leq C \left( [C f_{z_0}]_{C^{2\alpha/3}_k(Q_1)} + \|A\|_{C^{2\alpha/3}_k(Q_1)}^{3+2\alpha/3+3/\alpha} \|f_{z_0}\|_{L^\infty(Q_1)} \right).
\end{split}
\]
Applying Lemma \ref{l:product} for the product $C f_{z_0}$, and using Lemma \ref{l:ABC-Holder} and the upper bounds on $A$ and $C$ from Lemma \ref{l:COV}, we have
\begin{equation}\label{e:Dv2f-est}
\begin{split}
[D_v^2 f_{z_0}]_{C^{2\alpha/3}_k(Q_{1/2})} + [(\partial_t  +v\cdot &\nabla_x) f_{z_0}]_{C^{2\alpha/3}_k(Q_{1/2})}\\
& \leq C \left( \|f_{z_0}\|_{L^\infty(Q_1)} \langle v_0\rangle^{\alpha/3}[\vv^m f]_{C^\alpha_k(\Omega(z_0))} + [f_{z_0}]_{C^{2\alpha/3}_k(Q_1)} \langle v_0\rangle^\gamma\right.\\
&\left. \qquad + \left( \langle v_0\rangle^{2+\alpha/3}[\vv^m f]_{C^\alpha_k(\Omega(z_0))}\right) ^{3+2\alpha/3+3/\alpha} \|f_{z_0}\|_{L^\infty(Q_1)} \right).%\\
%&\leq C\left(1 + t_0^{-\alpha/3}\right)\langle v_0\rangle^{-q+7+7\alpha/3 + 6/\alpha + 2\alpha^2/9} \left( 1 + [\vv^m f]_{C^\alpha_k(\Omega)}^{3+2\alpha/3+3/\alpha}\right),
\end{split}
\end{equation}
Next, we use Lemma \ref{l:f-and-fz}(c), the interpolation $[f]_{C^{2\alpha/3}_k(Q_{r_0}(z_0))} \leq C[f]_{C^\alpha_k(Q_{r_0}(z_0))}^{2/3}\|f\|_{L^\infty(Q_{r_0}(z_0))}^{1/3}$, and Lemma \ref{l:product} to write
\[
\begin{split}
[f_{z_0}]_{C^{2\alpha/3}_k(Q_1)} &\leq C\min\{1,t_0^{\alpha/3}\} [f]_{C^{2\alpha/3}_k(Q_{r_0}(z_0))}\\
&\leq C[f]_{C^\alpha_k(Q_{r_0}(z_0))}^{2/3}\|f\|_{L^\infty(Q_{r_0}(z_0))}^{1/3}\\
&\leq C \langle v_0\rangle^{-(q+2m)/3} [\vv^m f]_{C^\alpha_k(Q_{r_0}(z_0))}^{2/3}\|f\|_{L^\infty_q(\Omega(z_0))}^{1/3}.
\end{split}
\]
Returning to \eqref{e:Dv2f-est}, using $\|f_{z_0}\|_{L^\infty(Q_1)}\lesssim \langle v_0\rangle^{-q}\|f\|_{L^\infty_q(\Omega(z_0))}$, absorbing the norm $\|f\|_{L^\infty_q}$ into the constant, and keeping only the largest powers of $\langle v_0 \rangle$ and $[\vv^m f]_{C^\alpha_k}$, we obtain 
\begin{equation}\label{e:intermediate-result}
\begin{split}
[D_v^2 f_{z_0}]_{C^{2\alpha/3}_k(Q_{1/2})} +& [(\partial_t  +v\cdot \nabla_x) f_{z_0}]_{C^{2\alpha/3}_k(Q_{1/2})}\\
& \leq C \langle v_0\rangle^{-(q+2m)/3+7+7\alpha/3 + 6/\alpha + 2\alpha^2/9} \left( 1 + [\vv^m f]_{C^\alpha_k(\Omega(z_0))}^{3+2\alpha/3+3/\alpha}\right),
\end{split}
\end{equation}
Finally, we translate from $f_{z_0}$ to $f$, using the chain rule and \eqref{e:distance-compare}. In particular, with $\|S\|$ denoting the operator matrix norm of $S$, we have
\[
\begin{split}
[D_v^2 f]_{C^{2\alpha/3}_k(Q_{r_1/2}(z_0))} 
&\leq C r_1^{-2} \|S\|^{-2} (1+t_0^{-\alpha/3})\langle v_0\rangle^\alpha [D_v^2 f_{z_0}]_{C^{2\alpha/3}_k(Q_{1/2})}\\
& \leq C(1+t_0^{-1-\alpha/3})\langle v_0\rangle^{2+2\alpha/3}[D_v^2 f_{z_0}]_{C^{2\alpha/3}_k(Q_{1/2})},
\end{split}
\]
and
\[
\begin{split}
[(\partial_t + v\cdot\nabla_x)f]_{C^{2\alpha/3}_k(Q_{r_1/2}(z_0))}
&\leq C r_1^{-2}  (1+t_0^{-\alpha/3})\langle v_0\rangle^\alpha [(\partial_t + v\cdot \nabla_x) f_{z_0}]_{C^{2\alpha/3}_k(Q_{1/2})}\\
&\leq C(1+t_0^{-1-\alpha/3})\langle v_0\rangle^{2+\gamma+2\alpha/3}[(\partial_t + v\cdot \nabla_x) f_{z_0}]_{C^{2\alpha/3}_k(Q_{1/2})},
\end{split}
\]
which, combined with \eqref{e:intermediate-result}, imply the conclusion of the lemma.
\end{proof}

Finally, we are ready to prove the global $C^2$ estimate that is needed in our proof of the propagation of a H\"older modulus:

\begin{proof}[Proof of Proposition \ref{p:D2f-est}]
Let $z_0\in [\tau/2,\tau]\times\R^6$ be fixed, and let $r_1$ be defined by \eqref{e:r1} and $r_0 = \min\{1,\sqrt{t_0/2}\}$. As in Lemma \ref{l:schauder}, define $\Omega(z_0) = Q_{r_0}^{t,x}(z_0) \times \R^3_v$. Since our assumption on $f$ implies polynomial decay of all orders, we choose $m> 5 + \gamma + \alpha/3$ arbitrarily, and choose $q> m$ large enough that the exponent of $\langle v_0\rangle$ in Lemma \ref{l:schauder} is negative, i.e.
\[
-(q+2m)/3+9+3\alpha + 6/\alpha + 2\alpha^2/9 + \gamma \leq 0.
\]
We apply the weighted interpolation of Lemma \ref{l:exp-interp} (note that $\rho' = \rho_0$ with our choice of $\rho$), followed by Lemma \ref{l:schauder} with our choices of $m$ and $q$:
\[
\begin{split}
\|e^{\rho \vv^\beta} & D_v^2 f\|_{L^\infty(Q_{r_1/2}(z_0))} \\
&\leq 
C [D_v^2 f]_{C^{2\alpha/3}_k(Q_{r_1/2}(z_0))}^{1 - \frac {2\alpha} {6-\alpha}}  \|e^{\rho_0\vv^\beta} f\|_{C^\alpha_k(Q_{r_1/2}(z_0))}^{\frac{2\alpha}{6-\alpha}}\\
&\leq 
C \left[ \left( 1+t_0^{-1-\alpha/3}\right) (1 + [\vv^m f]_{C^\alpha_k(\Omega(z_0))}^{3+2\alpha/3+3/\alpha})\right]^{1- \frac{2\alpha}{6-\alpha}} \|e^{\rho_0\vv^\beta} f\|_{C^\alpha_k(Q_{r_1/2}(z_0))}^{\frac{2\alpha}{6-\alpha}}\\
&\leq 
C \left(1 + t_0^{-1 + \frac{\alpha^2}{6-\alpha}}\right)  \left(1+\|e^{\rho_0\vv^\beta}  f\|_{C^\alpha_k(\Omega(z_0))}^{(3+2\alpha/3+3/\alpha) (1 - \frac {2\alpha} {6-\alpha}) + \frac{2\alpha}{6-\alpha}}\right)
\end{split}
\]
where in the last line, we used $Q_{r_1/2}(z_0) \subset \Omega(z_0)$ and the crude upper bound $[\vv^m f]_{C^\alpha_k(\Omega(z_0))} \leq \|e^{\rho_0 \vv^\beta} f\|_{C^\alpha_k(\Omega(z_0))}$. Since $z_0 \in [\tau/2,\tau]\times\R^6$ was arbitrary, and $t_0\approx \tau$, the proof is complete. Note that the constant $C$ depends on $\|e^{\rho_0\vv^\beta} f\|_{L^\infty([0,\tau]\times\R^6)}$ due to the dependence on $K_0$ in Lemma \ref{l:schauder}.
\end{proof}

\end{document}